\DeclareMathOperator*{\esssup}{ess\,sup}
\DeclareMathOperator*{\essinf}{ess\,inf}
\DeclareMathOperator{\loc}{loc}
\DeclareMathOperator{\BH}{BHT}
\DeclareMathOperator{\pv}{p.v.}
\DeclareMathOperator{\UMD}{UMD}
\DeclareMathOperator{\BMO}{BMO}
\newtheorem{theorem}{Theorem}[section]
\newtheorem{lemma}[theorem]{Lemma}
\newtheorem{proposition}[theorem]{Proposition}
\newtheorem*{proposition*}{Proposition}
\newtheorem{corollary}[theorem]{Corollary}
\theoremstyle{definition}
\newtheorem{definition}[theorem]{Definition}
\newtheorem{remark}[theorem]{Remark}
\numberwithin{equation}{section}
\newcommand{\N}{\mathbf{N}}
\newcommand{\R}{\mathbf{R}}
\newcommand{\Z}{\mathbf{Z}}
\newcommand{\D}{\mathscr{D}}
\newcommand{\F}{\mathscr{F}}
\newcommand{\Sp}{\mathscr{S}}
\newcommand{\eps}{\varepsilon}
\title[Multilinear weight classes]{Quantitative estimates and extrapolation for multilinear weight classes}
\author{Zoe Nieraeth}
\address{Delft Institute of Applied Mathematics \\ Delft University of Technology \\ P.O. Box 5031\\ 2600 GA Delft \\The Netherlands}
\email{znieraeth@bcamath.org}
\begin{document}
\pagestyle{headings}
\begin{abstract}
In this paper we prove a quantitative multilinear limited range extrapolation theorem which allows us to extrapolate from weighted estimates that include the cases where some of the exponents are infinite. This extends the recent extrapolation result of Li, Martell, and Ombrosi. We also obtain vector-valued estimates including $\ell^\infty$ spaces and, in particular, we are able to reprove all the vector-valued bounds for the bilinear Hilbert transform obtained through the helicoidal method of Benea and Muscalu. Moreover, our result is quantitative and, in particular, allows us to extend quantitative estimates obtained from sparse domination in the Banach space setting to the quasi-Banach space setting.

Our proof does not rely on any off-diagonal extrapolation results and we develop a multilinear version of the Rubio de Francia algorithm adapted to the multisublinear Hardy-Littlewood maximal operator.

As a corollary, we obtain multilinear extrapolation results for some upper and lower endpoints estimates in weak-type and $\BMO$ spaces.
\end{abstract}

\keywords{Extrapolation, limited range, multilinear, Muckenhoupt weights, bilinear Hilbert transform, sparse domination}

\subjclass[2010]{42B20, 42B25}


\maketitle

\section{Introduction}
An essential tool in the theory of singular operators is extrapolation. In one of its forms, the classical extrapolation theorem of Rubio de Francia \cite{GR85} says that if an operator $T$ satisfies $L^q(w)$ boundedness for a fixed $q\in(1,\infty)$ and for all weights $w$ in the Muckenhoupt class $A_q$, then $T$ is in fact bounded on $L^p(w)$ for all $p\in(1,\infty)$ and all $w\in A_p$.

Many variations of Rubio de Francia's extrapolation theorem have appeared over the years adapted to various situations. A multilinear version of the extrapolation result was found by Grafakos and Martell \cite{GM04}. Another version provided by Auscher and Martell \cite{AM07} dealt with operators bounded only for a limited range of $p$ rather than for all $p\in(1,\infty)$. Combining these approaches, it was shown by Cruz-Uribe and Martell \cite{CM17} that if there are $0\leq r_j<s_j\leq\infty$ and $q_j\in[r_j,s_j]$, $q_j\neq 0,\infty$, such that an $m$-linear operator $T$ satisfies
\begin{equation}\label{eq:introextrap}
\|T(f_1,\ldots,f_m)\|_{L^q(w^q)}\leq c\prod_{j=1}^m\|f_j\|_{L^{q_j}(w_j^{q_j})}
\end{equation}
for all weights $w_j^{q_j}$ in the restricted Muckenhoupt and Reverse H\"older class $A_{q_j/r_j}\cap RH_{(s_j/q_j)'}$, where $w=\prod_{j=1}^m w_j$, $\frac{1}{q}=\sum_{j=1}^m\frac{1}{q_j}$, then $T$ satisfies the same boundedness for all $p_j\in (r_j,s_j)$ and all $w_j^{p_j}\in A_{p_j/r_j}\cap RH_{(s_j/p_j)'}$, as well as certain vector-valued bounds.

In the linear setting for operators satisfying weighted bounds, it need not be the case that they are bounded on $L^\infty$, as is the case, for example, for the Hilbert transform. In particular, it is impossible to extrapolate estimates to this endpoint. This is in contrast to what happens in the multilinear setting, where it may very well occur that singular integral operators satisfy boundedness as in \eqref{eq:introextrap}, but with some of the $q_j$ being equal to $\infty$. This brings an interest to the question whether it is possible to extrapolate to bounds that include these endpoint cases $p_j=\infty$, starting from an initial weighted estimate where the $q_j$ are also allowed to be infinite. In this work we develop a method that does include these cases based on a multilinear Rubio de Francia algorithm. To facilitate this we give a natural extension in the definition of the weight classes to include these cases, see Definition~\ref{def:weightclass} below. We point out that it is also possible to obtain these endpoint cases through off-diagonal extrapolation methods \cite{LMMOV19}.

As an application for the theory, one can consider the bilinear Hilbert transform $\BH$ given by
\[
\BH(f_1,f_2)(x):=\pv\int_\R\!f_1(x-t)f_2(x+t)\,\frac{\mathrm{d}t}{t},
\]
which plays a central role in the theory of time-frequency analysis. It was shown by Lacey and Thiele \cite{LT99} that $\BH$ is bounded $L^{p_1}\times L^{p_2}\to L^p$ with $\frac{1}{p}=\frac{1}{p_1}+\frac{1}{p_2}$ if $1<p_1,p_2\leq\infty$ and $\frac{2}{3}<p<\infty$. Through the helicoidal method of Benea and Muscalu \cite{BM16,BM18}, vector-valued bounds of the form $L^{p_1}(\ell^{q_1})\times L^{p_2}(\ell^{q_2})\to L^p(\ell^q)$ were established in this range of $p_1$, $p_2$, $p$ for various choices of $1<q_1,q_2\leq\infty$, $\frac{2}{3}<q<\infty$ with $\frac{1}{q}=\frac{1}{q_1}+\frac{1}{q_2}$. However, they left open the problem whether one can obtain vector-valued bounds for all $q_1$, $q_2$, $q$ in the same range as Lacey and Thiele's theorem, i.e., for all $1<q_1,q_2\leq\infty$ with $\frac{2}{3}<q<\infty$. While $\BH$ satisfies weighted bounds as well as more general sparse bounds, see \cite{BM17,CDO16}, the extrapolation result by Cruz-Uribe and Martell \cite{CM17} does not allow one to cover the full range of exponents. In particular, their result cannot retrieve any of the vector-valued bounds involving $\ell^\infty$ spaces. Such bounds also fall outside of the extrapolation result of Lorist and the author \cite{LN17} where vector valued extensions of multilinear operators were considered in the setting of $\UMD$ Banach spaces, since $\ell^\infty$ does not satisfy the $\UMD$ property. The problem seems to be that the multilinear nature of the problem is not completely utilized when one imposes individual conditions on the weights rather than involving an interaction between the various weights.

In the recent work \cite{LMO18} by Li, Martell, and Ombrosi an extrapolation result was presented where they work with a limited range version of the multilinear weight condition introduced by Lerner, Ombrosi, P\'erez, Torres, and Trujillo-Gonz\'alez \cite{LOPTT09} which also appears in \cite{BM17} and, in the bilinear case, in \cite{CDO16}. Indeed, such weight classes are characterized by boundedness of the multi-sublinear Hardy-Littlewood maximal operator as well as by boundedness of sparse forms, meaning the theory can be applied to important operators such as multilinear Calder\'on-Zygmund operators as well as the bilinear Hilbert transform. They introduced the weight class $A_{\vec{p},\vec{r}}$ where $\vec{p}=(p_1,\ldots,p_m)$, $\vec{r}=(r_1,\ldots,r_{m+1})$ and $1\leq r_j\leq p_j<\infty$ and $r'_{m+1}>p$ with $\frac{1}{p}=\sum_{j=1}^m\frac{1}{p_j}$ and $\vec{w}=(w_1,\ldots,w_m)\in A_{\vec{p},\vec{r}}$ if
\begin{equation}\label{eq:wlmo}
[\vec{w}]_{A_{\vec{p},\vec{r}}}:=\sup_{Q\text{ a cube}}\left(\frac{1}{|Q|}\int_Q\!\Big(\prod_{j=1}^m w_j^{\frac{p}{p_j}}\Big)^{\frac{r'_{m+1}}{r'_{m+1}-p}}\,\mathrm{d}x\right)^{\frac{1}{p}-\frac{1}{r'_{m+1}}}\prod_{j=1}^m\left(\frac{1}{|Q|}\int_Q\!w_j^{\frac{r_j}{r_j-p_j}}\,\mathrm{d}x\right)^{\frac{1}{r_j}-\frac{1}{p_j}}<\infty.
\end{equation}
They showed that if \eqref{eq:introextrap} holds for a $\vec{q}$ with $1\leq r_j\leq q_j<\infty$, $r'_{m+1}>q$ and all $(w_1^{q_1},\ldots,w_m^{q_m})\in A_{\vec{q},\vec{r}}$, then $T$ satisfies the same boundedness for all $\vec{p}$ and $(w_1^{p_1},\ldots,w_m^{p_m})\in A_{\vec{p},\vec{r}}$ with $r_j<p_j<\infty$ and $r'_{m+1}>p$. Furthermore, their result extends and reproves some of the vector-valued bounds found by Benea and Muscalu \cite{BM18} for $\BH$. This class of weights does seem to be adapted to the situation even when $p_j=
\infty$, but one needs to be careful in how the constant is interpreted in this case. Similar to the proof of the extrapolation result of Cruz-Uribe and Martell, their proof of this extrapolation result is based upon an off-diagonal extrapolation result, but in their work they left open exactly what happens in the case that some of the exponents are infinite. They announced a paper in which these cases were treated which had not appeared yet when our paper was first posted, but is available now \cite{LMMOV19}. Here they show that, as a feature of off-diagonal extrapolation, it is also possible to obtain estimates that include the cases of infinite exponents.

In this work we again prove an extrapolation result using the multilinear weight classes, and our result includes these endpoint cases which, in particular, include the possibility of extrapolating from the cases where in the initial assumption the exponents can be infinite. Our proof is new and does not rely on any off-diagonal extrapolation result. Rather, we generalize the Rubio de Francia algorithm to a multilinear setting adapted to the multi-sublinear Hardy-Littlewood maximal operator. As a corollary, we are able to obtain vector-valued extensions of operators to spaces including $\ell^\infty$ spaces. Thus, applying this to $\BH$ allows us to recover these endpoint bounds that were obtained earlier through the helicoidal method \cite{BM18}.

Our construction is quantitative in the sense that it allows us to track the dependence of the bounds on the weight constants. Such quantitative versions of extrapolation results were first formalized by Dragi\v{c}evi\'c, Grafakos, Pereyra, and Petermichl in the linear setting in \cite{DGPP05}, but are completely new in the multilinear setting. In the linear setting this result is based on Buckley's sharp weighted bound for the Hardy-Littlewood maximal operator. This bound has been generalized to the multi-sublinear Hardy-Littlewood maximal operator by Dami\'an, Lerner, and P\'erez \cite{DLP15} to a sharp estimate in the setting of a mixed type $A_{\vec{p}}$\,--$A_{\infty}$ estimates and a sharp $A_{\vec{p}}$ bound is found in \cite{LMS14}. We give a different proof of this result for the limited range version of this maximal operator by generalizing a proof of Lerner \cite{Le08}.

Finally, we also show how our quantitative extrapolation result recovers and extends a bound obtained for multi-(sub)linear sparsely dominated operators, generalizing the bound of Hyt\"onen's $A_2$ Theorem \cite{Hy12}. More precisely, sparse domination yields sharp bounds for an operator for exponents $p_1,\ldots,p_m$ only if $\frac{1}{p}=\sum_{j=1}^m\frac{1}{p_m}\leq 1$ so that we may appeal to duality. Our extrapolation result allows us to show that this same control in terms of the weight also holds when $\frac{1}{p}>1$.
\subsection{Symmetry in Muckenhoupt weight classes}
To facilitate our results, we heavily rely on the symmetric structure of the Muckenhoupt classes.

For $p\in(1,\infty)$, a standard method of obtaining weighted $L^p$ estimates with a weight $w$ is by using the duality $(L^p(w))^\ast =L^{p'}(w^{1-p'})$ given through the integral pairing
\[
\langle f,g\rangle=\int_{\R^n}\! fg\,\mathrm{d}x.
\]
Moreover, the Muckenhoupt $A_p$ class is defined through these two weights $w$ and $w^{1-p'}$ through
\[
[w]_{A_p}:=\sup_Q\left(\frac{1}{|Q|}\int_Q\!w\,\mathrm{d}x\right)\left(\frac{1}{|Q|}\int_Q\!w^{1-p'}\,\mathrm{d}x\right)^{p-1}
\]
where the supremum is taken over all cubes $Q\subseteq\R^n$. One way to understand this definition better is by noting that we can relate the weights $w$ and $w^{1-p'}$ through $w^{\frac{1}{p}}(w^{1-p'})^{\frac{1}{p'}}=1$. One can also make sense of this condition if $p=1$ through
\[
[w]_{A_1}:=\sup_Q\left(\frac{1}{|Q|}\int_Q\!w\,\mathrm{d}x\right)\left(\essinf_{x\in Q}w(x)\right)^{-1},
\]
and one usually defines $A_\infty:=\bigcup_{p\in[1,\infty)}A_p$.

When we replace the weight $w$ by the weight $w^p$ we find, using the averaging notation $\langle h\rangle_{q,Q}:=\left(\frac{1}{|Q|}\int_Q\!|h|^q\,\mathrm{d}x\right)^{\frac{1}{q}}$, that
\[
[w^p]_{A_p}^{\frac{1}{p}}=\sup_Q\langle w\rangle_{p,Q}\langle w^{-1}\rangle_{p',Q}
\]
for $p\in (1,\infty)$. The symmetry in this condition is much more prevalent and this condition seems to be more naturally adapted to the weighted $L^p$ theory. Indeed, defining
\[
[w]_p:=[w^p]_{A_p}^{\frac{1}{p}},
\]
we note that $[w]_p=[w^{-1}]_{p'}$. If we denote the Hardy-Littlewood maximal operator by $M$ and if we define the bi-sublinear Hardy-Littlewood maximal operator $M_{(1,1)}$ by
\[
M_{(1,1)}(f_1,f_2)(x):=\sup_{Q\ni x}\langle f_1\rangle_{1,Q}\langle f_2\rangle_{1,Q},
\]
then we have the remarkable equivalences
\begin{equation}\label{eq:prelmaxeq}
\|M_{(1,1)}\|_{L^{p}(w^p)\times L^{p'}(w^{-p'})\to L^{1,\infty}}\eqsim
\|M\|_{L^p(w^p)\to L^{p,\infty}(w^p)}\eqsim\|M\|_{L^{p'}(w^{-p'})\to L^{p',\infty}(w^{-p'})}\eqsim[w]_p,
\end{equation}
where the implicit constant depends only on the dimension, see Proposition~\ref{prop:mainsym} and Proposition~\ref{prop:sinf} below.

Another way of thinking of these equivalences is by setting $w_1:=w$, $w_2:=w^{-1}$ and $p_1:=p$, $p_2:=p'$ so that we have the relations
\begin{equation}\label{eq:2wrelations}
w_1w_2=1,\qquad \frac{1}{p_1}+\frac{1}{p_2}=1.
\end{equation}
Then one can impose a symmetric weight condition
\[
[(w_1,w_2)]_{(p_1,p_2)}:=\sup_{Q}\langle w_1\rangle_{p_1,Q}\langle w_2\rangle_{p_2,Q}<\infty
\]
and note that
\[
[(w_1,w_2)]_{(p_1,p_2)}=[w_1]_{p_1}=[w_2]_{p_2}.
\]
The equivalences in \eqref{eq:prelmaxeq} can now be thought of as
\begin{align*}
\|M_{(1,1)}\|_{L^{p_1}(w_1^{p_1})\times L^{p_2}(w_2^{p_2})\to L^{1,\infty}}&\eqsim[(w_1,w_2)]_{(p_1,p_2)},\\
\|M\|_{L^{p_1}(w_1^{p_1})\to L^{p_1,\infty}(w_1^{p_1})}&\eqsim[w_1]_{p_1},\\
\|M\|_{L^{p_2}(w_2^{p_2})\to L^{p_2,\infty}(w_2^{p_2})}&\eqsim[w_2]_{p_2}.
\end{align*}

We can even make sense of these expressions when $p_1=1$ and $p_2=\infty$ or $p_1=\infty$ and $p_2=1$, given that we use the correct interpretation and this is what allows us to extrapolate using such classes. Indeed, one can think of $f\in L^p(w^p)$ as the condition $\|fw\|_{L^p}<\infty$, which makes sense even when $p=\infty$ by requiring that the function $fw$ is essentially bounded. Using the interpretation $\langle h\rangle_{\infty,Q}=\esssup_{x\in Q}|h(x)|$, we see that the condition $[w_1]_1<\infty$ is equivalent to the usual $A_1$ condition imposed on the weight $w_1=w$, while the condition $[w_1]_\infty<\infty$ is equivalent to the condition $w_2=w^{-1}\in A_1$. We emphasize here that our condition $[w]_\infty<\infty$ is not equivalent to the condition $w\in A_\infty=\bigcup_{p\in[1,\infty)}A_p$ and these notions should not be confused. The condition $w^{-1}\in A_1$ seems to be a natural upper endpoint condition and one can show that this is equivalent to the boundedness
\[
\|(Mf)w\|_{L^\infty}\leq c\|fw\|_{L^\infty},
\]
see Proposition~\ref{prop:sinf} below. It also turns out that this condition allows us to extrapolate away from weighted $L^\infty$ estimates. We point out that this idea has already been used in the endpoint extrapolation result of Harboure, Mac\'ias and Segovia \cite[Theorem 3]{HMS88}.

We wish to view our symmetric weight condition in the context of extrapolation. In proving Rubio de Francia's extrapolation theorem, one usually starts with a pair of functions $(h,f)$ and assumes that one has the inequality
\begin{equation}\label{eq:heurexintro}
\|h\|_{L^q(w^q)}\leq c\|f\|_{L^q(w^q)}
\end{equation}
for some $q\in[1,\infty]$ and all weights $w$ satisfying $[w]_q<\infty$. The idea is then that given a $p\in (1,\infty)$ and a weight $w$ satisfying $[w]_p<\infty$, one can construct a weight $W$, possibly depending on $f$, $h$, and $w$, so that $W$ satisfies $[W]_q<\infty$ as well as some additional properties to ensure that we can use \eqref{eq:heurexintro} with $W$ to conclude that
\begin{equation}\label{eq:heurexintro2}
\|h\|_{L^p(w^p)}\leq \tilde{c}\|f\|_{L^p(w^p)}.
\end{equation}
Applying this with $h=Tf$ then gives the desired boundedness for an operator $T$. For the proof one usually splits into two cases, namely the case where $p<q$ and the case where $p>q$. In the former case one can apply H\"older's inequality to move from $L^p$ to $L^q$ and in the latter case one uses duality and a similar trick to move from $L^{p'}$ to $L^{q'}$. The point is that both of these cases are essentially the same, but due to the notation we use we have to deal with the cases separately. Here, we wish to come up with a formalization to avoid this redundancy.

The extrapolation theorem is essentially a consequence of to the following proposition:
\begin{proposition*}
Suppose we are given $p_1, p_2\in(1,\infty)$ satisfying $\frac{1}{p_1}+\frac{1}{p_2}=1$ and weights $w_1,w_2$ satisfying $w_1w_2=1$ and $[(w_1,w_2)]_{(p_1,p_2)}<\infty$. Moreover, assume we have two functions $f_1\in L^{p_1}(w_1^{p_1})$ and $f_2\in L^{p_2}(w_2^{p_2})$ and $q_1, q_2\in[1,\infty]$ with $\frac{1}{q_1}+\frac{1}{q_2}=1$. Then there are weights $W_1$, $W_2$ satisfying $W_1W_2=1$,
\[
\|f_1 W_1\|_{L^{q_1}}\|f_2 W_2\|_{L^{q_2}}\leq 2\|f_1 w_1\|_{L^{p_1}}\|f_2 w_2\|_{L^{p_2}}
\]
and
\[
[(W_1,W_2)]_{(q_1,q_2)}\leq C[(w_1,w_2)]_{(p_1,p_2)}^{\max\left(\frac{p_1}{q_1},\frac{p_2}{q_2}\right)}.
\]
\end{proposition*}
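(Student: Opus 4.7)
I will prove the proposition via a Rubio de Francia style construction, exploiting the symmetry of the problem under the swap of indices $1\leftrightarrow 2$. By this symmetry, I may assume without loss of generality that $p_1\geq q_1$ (equivalently $p_2\leq q_2$), so that $\max(p_1/q_1,p_2/q_2)=p_1/q_1$. I will also normalize $\|f_j w_j\|_{L^{p_j}}=1$, which is lossless since both desired inequalities are homogeneous of the same degree in $(f_1,f_2)$. Throughout I write $[w]:=[(w_1,w_2)]_{(p_1,p_2)}$, and recall from the symmetric formalism that $[w_1]_{p_1}=[w_2]_{p_2}=[w]$.

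\textbf{Rubio de Francia majorants.} The central step is to build two $A_1$ weights $F_j\geq|f_j|$ with quantitatively controlled $A_1$ constants. Setting $\mathsf{M}_j:=\|M\|_{L^{p_j}(w_j^{p_j})\to L^{p_j}(w_j^{p_j})}$, which is finite since $[w]<\infty$, I form the standard iteration
\[
F_j:=\sum_{k=0}^\infty \frac{M^k(|f_j|)}{(2\mathsf{M}_j)^k},
\]
yielding $F_j\geq|f_j|$ pointwise, $\|F_j\|_{L^{p_j}(w_j^{p_j})}\leq 2$, and $F_j\in A_1$ with $[F_j]_{A_1}\leq 2\mathsf{M}_j$. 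I then define
\[
W_1:=F_2^{1/q_1}F_1^{-1/q_2},\qquad W_2:=F_2^{-1/q_1}F_1^{1/q_2},
\]
with the convention that a factor raised to the power $0$ equals $1$, which handles the endpoint cases $q_j\in\{1,\infty\}$. Immediately $W_1 W_2=1$.

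\textbf{Norm bound.} Using the H\"older pairing between $L^{p_1}(w_1^{p_1})$ and $L^{p_2}(w_2^{p_2})$ afforded by $w_1 w_2=1$ and $1/p_1+1/p_2=1$,
\[
\|f_1 W_1\|_{L^{q_1}}^{q_1}=\int |f_1|^{q_1}F_2 F_1^{-q_1/q_2}\,dx\leq \|F_2\|_{L^{p_2}(w_2^{p_2})}\,\bigl\||f_1|^{q_1}F_1^{-q_1/q_2}\bigr\|_{L^{p_1}(w_1^{p_1})}.
\]
The first factor is at most $2$ by construction. For the second, the pointwise bound $F_1\geq|f_1|$ combined with the identity $q_1(1-1/q_2)=1$ gives $|f_1|^{q_1}F_1^{-q_1/q_2}\leq|f_1|$, so the second factor is bounded by $\|f_1\|_{L^{p_1}(w_1^{p_1})}=1$. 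Hence $\|f_1W_1\|_{L^{q_1}}^{q_1}\leq 2$; a symmetric computation gives $\|f_2W_2\|_{L^{q_2}}^{q_2}\leq 2$, and multiplying produces the product bound $2^{1/q_1+1/q_2}=2$.

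\textbf{Weight class bound and main difficulty.} Since $W_1^{q_1}=F_2\,F_1^{1-q_1}$ is already in Jones factorization form, the easy direction of Jones' theorem gives
\[
[W_1^{q_1}]_{A_{q_1}}\leq [F_2]_{A_1}\,[F_1]_{A_1}^{q_1-1}.
\]
Inserting the naive Buckley bound $\mathsf{M}_j\lesssim[w]^{p_j'}$ and simplifying only yields $[W_1]_{q_1}\lesssim[w]^{p_1/q_1+p_2/q_2}$, the \emph{sum} rather than the desired maximum. The main difficulty is therefore to sharpen this from sum to max; I anticipate that this requires replacing the naive linear input by the refined ``multilinearly balanced'' Rubio de Francia iteration developed earlier in the paper, adapted to the multisublinear maximal operator $M_{(1,1)}$. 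Such an iteration should leverage the sharp linear-in-$[w]$ bound for $M_{(1,1)}$ so that the Jones exponent balance ultimately produces only the dominant term $[w]^{\max(p_1/q_1,p_2/q_2)}$.
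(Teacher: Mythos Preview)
Your norm bound argument is clean and correct. The gap is in the weight bound, and your diagnosis of \emph{where} it fails is accurate, but your proposed remedy is a misdirection.

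The issue is not that the linear Rubio de Francia iteration needs to be replaced by a bilinear one adapted to $M_{(1,1)}$. In the present $m=1$ setting the paper's construction uses only the ordinary linear Rubio de Francia operator, just as you do; the sharpening from sum to max comes instead from a different \emph{choice of the weight} $W_j$. Your two-majorant Jones factorization $W_1=F_2^{1/q_1}F_1^{-1/q_2}$ pays the Buckley constant on both sides, which is what forces the sum. The paper avoids this by applying Rubio de Francia to only the single function whose exponent increases --- in your convention $f_2$, since $p_2\leq q_2$ --- and then mixing the resulting $A_1$ weight with the \emph{original} weight $w_2$ rather than with a second majorant. Concretely,
\[
W_2:=(Rf_2)^{-(1-p_2/q_2)}\,w_2^{p_2/q_2},\qquad W_1:=W_2^{-1}=(Rf_2)^{1-p_2/q_2}\,w_1^{p_2/q_2},
\]
with $R$ the standard Rubio de Francia operator on $L^{p_2}(w_2^{p_2})$. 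The norm bounds follow from $|f_2|\leq Rf_2$ and H\"older exactly as in your argument. For the weight bound, one H\"older-splits $\langle W_2^{-1}\rangle_{q_2',Q}$ along the decomposition $\tfrac{1}{q_2'}=(1-\tfrac{p_2}{q_2})+\tfrac{p_2}{q_2}\cdot\tfrac{1}{p_2'}$, uses the $A_1$ property $\langle Rf_2\rangle_{1,Q}\lesssim[w]^{p_2'}\inf_Q Rf_2$ on the first factor, and pairs the $w_2^{\pm}$ factors directly with $[w]=[w_2]_{p_2}$. The infima cancel, and the exponents add up via the identity
\[
p_2'\Bigl(1-\frac{p_2}{q_2}\Bigr)+\frac{p_2}{q_2}=\frac{p_2'}{q_2'}=\frac{p_1}{q_1}=\max\Bigl(\frac{p_1}{q_1},\frac{p_2}{q_2}\Bigr).
\]
So the fix is structural: keep one leg anchored to the original weight $w_j$ instead of replacing it by a fresh $A_1$ majorant. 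The genuinely multilinear Rubio de Francia machinery (Lemma~\ref{lem:multirubalg}) only enters for $m\geq 2$, where several exponents may increase simultaneously; in the present bilinear-symmetric case it reduces to the linear algorithm and the subtlety is entirely in the formula for $W_j$.
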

Indeed, the result of the extrapolation theorem follows by applying the proposition with $f_1:=f$, $q_1:=q$, $q_2:=q'$, $p_1:=p$, $p_2:=p'$, $w_1:=w$, $w_2=w^{-1}$ and $W_1:=W$, $W_2:=W^{-1}$ so that, by \eqref{eq:heurexintro}, we have
\[
|\langle h,f_2\rangle|\leq\|hW\|_{L^q}\|f_2W^{-1}\|_{L^{q'}}\leq c\|fW\|_{L^q}\|f_2W^{-1}\|_{L^{q'}}\leq 2c\|f w\|_{L^{p}}\|f_2 w^{-1}\|_{L^{p'}}.
\]
Thus, by duality, we obtain \eqref{eq:heurexintro2}, as desired.

The proof of the proposition uses the classical construction using the Rubio de Francia algorithm and the novelty here is our symmetric formulation. A proof can be found in this work, as it is a special case of Theorem \ref{thm:main}. The case $p<q$ in the proposition takes the form $p_1<q_1$ and $p_2>q_2$ while the case $p>q$ takes the form $p_1>q_1$ and $p_2<q_2$. The fact that the proposition is formulated completely symmetrically in terms of the parameters indexed over $\{1,2\}$, where we note that $[(w_1,w_2)]_{(p_1,p_2)}=[(w_2,w_1)]_{(p_2,p_1)}$, means that these respective cases can be proven using precisely the same argument, up to a permutation of the indices. Thus, without loss of generality, one only needs to prove one of the two cases.

These symmetries become especially important in the $m$-linear setting where we are dealing with parameters indexed over $\{1,\ldots,m+1\}$ and the amount of cases we have to consider increases. Thanks to our formulation, we will be able to reduce these multiple cases back to a single case in our arguments again by permuting the indices.

We wish to point out here that to facilitate our symmetric formulation and to use the duality argument involving the Rubio de Francia algorithm as above, we need to essentially restrict ourselves to the Banach range $\frac{1}{p}\leq 1$. However, in the $m$-linear setting one also has to deal with the quasi-Banach range $\frac{1}{p}>1$. This means that to employ our multilinear Rubio de Francia algorithm, we must first reduce to the case where $\sum_{j=1}^m\frac{1}{p_j}=\frac{1}{p}\leq1.$ In this case we can set $\frac{1}{p_{m+1}}:=1-\frac{1}{p}\geq 0$ and $\sum_{j=1}^{m+1}\frac{1}{p_j}=1$, which places us in the setting of Theorem \ref{thm:main}. This is not a problem however, as reducing to this case is facilitated by the rescaling properties of the multilinear weight classes, see also Remark \ref{rem:qbvsb}. In conclusion, even though our multilinear Rubio de Francia algorithm is applied in the Banach range $\frac{1}{p}\leq 1$, our result also includes the quasi-Banach range $\frac{1}{p}>1$.

This article is organized as follows:
\begin{itemize}
\item In Section~\ref{sec:prelim} we state our main result and give an overview of the multilinear weight classes, proving some important properties as well as proving new quantitative estimates with respect to the multisublinear maximal operator as well as sparse forms.

\item In Section~\ref{sec:proof} we prove the main result.

\item In Section~\ref{sec:app} we apply the extrapolation result for weak type bounds and certain $\BMO$ type bounds as well as for vector-valued bounds. Moreover, we give an application of our results to the bilinear Hilbert transform.
\end{itemize}
\textbf{Acknowledgements.} The author would like to thank Cristina Benea and Dorothee Frey for their feedback on the draft. In particular the author is grateful to Cristina for her thoroughness in helping to provide context regarding what is known for the bilinear Hilbert transform and multilinear extrapolation. Finally, the author would like to thank the anonymous referee for their helpful suggestions.
\section{Multilinear weight classes}\label{sec:prelim}
\subsection{Setting and main result}
We work in $\R^n$ equipped with the Lebesgue measure $\mathrm{d}x$. This is mostly for notational convenience and our results also hold in the more general setting of spaces of doubling quasimetric measure spaces, provided one uses the right notion of dyadic cubes in this setting, see \cite{hytonen12}. For a measurable set $E$ we denote its Lebesgue measure by $|E|$. A measurable function $w:\R^n\to (0,\infty)$ is called a weight. We can identify $w$ with a measure by $w(E):=\int_E\!w\,\mathrm{d}x$. For $p\in(0,\infty]$, a weight $w$, and a measurable function $f$ on $\R^n$ we say that $f\in L^p(w^p)$ provided that $\|f\|_{L^p(w^p)}:=\|fw\|_{L^p}<\infty$. Moreover, for a measurable set $E\subseteq\R^n$ with $0<|E|<\infty$ we write
\[
\langle f\rangle_{p,E}:=\left(\frac{1}{|E|}\int_E\!|f|^p\,\mathrm{d}x\right)^{\frac{1}{p}}
\]
when $0<p<\infty$ and $\langle f\rangle_{\infty,E}:=\esssup_{x\in E}|f(x)|$.

We will use the notation $A\lesssim B$ if there is a constant $c>0$, independent of the important parameters, such that $A\leq cB$. Moreover, we write $A\eqsim B$ if $A\lesssim B$ and $B\lesssim A$.

Let $m\in\N$ and let $r_1,\ldots,r_m\in(0,\infty)$, $s\in(0,\infty]$. For $p_1,\ldots,p_m\in(0,\infty]$, writing $\vec{r}=(r_1,\ldots,r_m)$ and similarly for $\vec{p}$, we write $\vec{r}\leq\vec{p}$ if $r_j\leq p_j\leq\infty$ for all $j\in\{1,\ldots,m\}$. Moreover, we write $(\vec{r},s)\leq\vec{p}$ if $\vec{r}\leq\vec{p}$ and $p\leq s$, where $p$ is defined by
\[
\frac{1}{p}=\sum_{j=1}^m\frac{1}{p_j}.
\]
Similarly, we write $\vec{r}<\vec{p}$ if $r_j<p_j$ for all $j\in\{1,\ldots,m\}$ and we write $(\vec{r},s)<\vec{p}$ if $\vec{r}<\vec{p}$ and $p<s$.

\begin{definition}\label{def:weightclass}
Let $r_1,\ldots,r_m\in(0,\infty)$, $s\in(0,\infty]$, and $p_1,\ldots,p_m\in(0,\infty]$ with $(\vec{r},s)\leq\vec{p}$. Let $w_1,\ldots, w_m$ be weights and write $w=\prod_{j=1}^m w_j$, $\vec{w}=(w_1,\ldots,w_m)$. We say that $\vec{w}\in A_{\vec{p},(\vec{r},s)}$ if
\[
[\vec{w}]_{\vec{p},(\vec{r},s)}:=\sup_Q\left(\prod_{j=1}^m\langle w_j^{-1}\rangle_{\frac{1}{\frac{1}{r_j}-\frac{1}{p_j}},Q}\right)\langle w\rangle_{\frac{1}{\frac{1}{p}-\frac{1}{s}},Q}<\infty,
\]
where the supremum is taken over all cubes $Q\subseteq\R^n$.
\end{definition}
As a point of comparison, we note here that, for finite $p_j$, our condition $\vec{w}\in A_{\vec{p},(\vec{r},s)}$ is equivalent to the condition $(w_1^{p_1},\ldots,w_m^{p_m})\in A_{\vec{p},(r_1,\ldots,r_m,s')}$, where the latter condition considers the weight class of Li, Martell, and Ombrosi defined in \eqref{eq:wlmo}. Thus, in this range their extrapolation result \cite{LMO18} consider the same weights as we do.

Our main theorem is as follows:
\begin{theorem}[Quantitative multilinear limited range extrapolation]\label{thm:qmlre}
Let $(f_1,\ldots,f_m,h)$ be an $m+1$-tuple of measurable functions and let $r_1,\ldots,r_m\in(0,\infty)$, $s\in(0,\infty]$. Suppose that for some $q_1,\ldots,q_m\in(0,\infty]$ with $\vec{q}\geq(\vec{r},s)$ there is an increasing function $\phi_{\vec{q}}$ such that
\begin{equation}\label{eq:multextrapinit}
\|h\|_{L^q(w^q)}\leq\phi_{\vec{q}}([\vec{w}]_{\vec{q},(\vec{r},s)})\prod_{j=1}^m\|f_j\|_{L^{q_j}(w_j^{q_j})}
\end{equation}
for all $\vec{w}\in A_{\vec{q},(\vec{r},s)}$.

Then for all $p_1\,\ldots,p_m\in(0,\infty]$ with $\vec{p}>(\vec{r},s)$ there is an increasing function $\phi_{\vec{p},\vec{q},\vec{r},s}$ such that
\begin{equation}\label{eq:multextrapend}
\|h\|_{L^p(w^p)}\leq\phi_{\vec{p},\vec{q},\vec{r},s}([\vec{w}]_{\vec{p},(\vec{r},s)})\prod_{j=1}^m\|f_j\|_{L^{p_j}(w_j^{p_j})}
\end{equation}
for all $\vec{w}\in A_{\vec{p},(\vec{r},s)}$. More explicitly, we can take
\begin{equation}\label{eq:multextraquant}
\phi_{\vec{p},\vec{q},\vec{r},s}(t)=2^{\frac{m^2}{r}}\phi_{\vec{q}}\Big(C_{\vec{p},\vec{q},\vec{r},s}t^{ r\max\left(\frac{\frac{1}{r_1}-\frac{1}{q_1}}{\frac{1}{r_1}-\frac{1}{p_1}},\ldots,\frac{\frac{1}{r_m}-\frac{1}{q_m}}{\frac{1}{r_m}-\frac{1}{p_m}},\frac{\frac{1}{q}-\frac{1}{s}}{\frac{1}{p}-\frac{1}{s}}\right)}\Big)^{\frac{1}{r}},
\end{equation}
where $\frac{1}{r}=\sum_{j=1}^m\frac{1}{r_j}$.
\end{theorem}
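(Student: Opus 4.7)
The plan is to execute a symmetric multilinear Rubio de Francia iteration, after first reducing to the Banach range via rescaling and then introducing an $(m+1)$-th slot by duality, precisely as indicated in the introduction for the bilinear case.

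First I would rescale. Choosing a common exponent $\theta>0$ and replacing $f_j$, $h$, $w_j$ by $f_j^\theta$, $h^\theta$, $w_j^\theta$ (or the equivalent rescaling at the level of exponents) transforms the problem into one in which $\sum_j 1/p_j \leq 1$. The weight class $A_{\vec{p},(\vec{r},s)}$ rescales consistently under this operation, so no loss of generality is incurred. Once in the Banach range $p \geq 1$, I would use duality to write
\[
\|h\|_{L^p(w^p)} = \sup\bigl\{|\langle h, f_{m+1}\rangle| : \|f_{m+1}\|_{L^{p'}(w^{-p'})} \leq 1\bigr\}.
\]
Setting $w_{m+1}:=w^{-1}$ and $p_{m+1}:=p'$ (and likewise on the $\vec{q}$ side, with an adjustment to absorb the parameter $s$ into an $(m+1)$-th index) places us in a fully symmetric $(m+1)$-tuple formulation with $\sum_{j=1}^{m+1} 1/p_j = 1$ and $\prod_{j=1}^{m+1} w_j = 1$, the setting of the symmetric proposition of the introduction.

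Next I would construct the new weights $W_1,\ldots, W_m$ via the multilinear Rubio de Francia algorithm. Each $W_j$ is built as an iterated Neumann-type sum
\[
W_j := \sum_{k\geq 0} \frac{T^k(g_j)}{(2\|T\|)^k},
\]
where $g_j$ is a suitably normalized initial seed (depending on $f_j$ and $\vec{w}$) and $T$ is an operator built from the multi-sublinear Hardy-Littlewood maximal operator $M_{(\vec{r},s)}$, scaled so as to act boundedly on the relevant product of weighted Lebesgue spaces. The construction is arranged to yield three properties: $\prod_{j=1}^m \|f_j\|_{L^{q_j}(W_j^{q_j})} \leq 2\prod_{j=1}^m \|f_j\|_{L^{p_j}(w_j^{p_j})}$, pointwise domination $W_j \geq g_j$ so that $\|h\|_{L^p(w^p)} \lesssim \|h\|_{L^q(W^q)}$ can be recovered after pairing with the dual test function $f_{m+1}$, and the quantitative weight bound $[\vec{W}]_{\vec{q},(\vec{r},s)} \lesssim [\vec{w}]_{\vec{p},(\vec{r},s)}^\alpha$ with $\alpha$ being the maximum appearing in \eqref{eq:multextraquant}. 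The value of $\alpha$ comes out of the sharp $A_{\vec{p}}$-type bound for $M_{(\vec{r},s)}$ promised in the introduction as an adaptation of Lerner's argument, which controls $\|T\|$ in terms of $[\vec{w}]_{\vec{p},(\vec{r},s)}$. Once $\vec{W}$ is in hand, feeding it into the hypothesis \eqref{eq:multextrapinit} and chaining the three estimates produces \eqref{eq:multextrapend} with the claimed quantitative constant; the prefactor $2^{m^2/r}$ traces back to the normalization of the Rubio de Francia sums once rescaling is undone.

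The main obstacle, as I see it, will be the quantitative bookkeeping behind the exponent $\alpha$. There are $m+1$ candidate ``directions'' in which $\vec{p}$ can differ from $\vec{q}$, namely the $m$ left-endpoint conditions governed by $r_1,\ldots,r_m$ and the right-endpoint condition governed by $s$. Each produces its own contribution to the blowup of $[\vec{W}]_{\vec{q},(\vec{r},s)}$, and the worst of them dictates the final exponent; this is exactly the maximum in \eqref{eq:multextraquant}. The symmetric $(m+1)$-slot formulation is what makes it possible to treat all these directions on equal footing, collapsing what would otherwise be many separate subcases into a single argument by permuting indices, and an extra secondary difficulty will be handling the endpoints $p_j=\infty$ or $p=s$, which forces the essential-supremum interpretation of averages highlighted in the introduction to be threaded carefully through the Rubio de Francia sums.
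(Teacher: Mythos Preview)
Your overall architecture is correct and matches the paper: rescale so that $\sum_j 1/r_j = 1$ (which forces $1/p \leq 1$), introduce the $(m{+}1)$-th slot by duality, and run a multilinear Rubio de Francia construction in the symmetric $(m{+}1)$-tuple setting. Two genuine technical points are glossed over, however, and they are where the real work lies.

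First, you write the Rubio de Francia weight as $W_j = \sum_k T^k(g_j)/(2\|T\|)^k$ with ``$T$ built from the multi-sublinear maximal operator.'' But $M_{\vec{r}}$ is $m$-sublinear, not sublinear, so it cannot be iterated in a Neumann sum. The paper resolves this via a pointwise factorization (its Lemma~\ref{lem:multimdomn}): it constructs \emph{sublinear} operators $N_{p_j,r_j,\vec{w}}$ on each $L^{p_j}(w_j^{p_j})$, bounded independently of the weight constant, such that
\[
M_{\vec{r}}(f_1,\ldots,f_m) \leq [\vec{w}]_{\vec{p},(\vec{r},\infty)}^{\gamma}\prod_{j=1}^m N_{p_j,r_j,\vec{w}}(f_j).
\]
This decoupling of the multi-sublinear operator into per-coordinate sublinear factors is what makes the per-coordinate Rubio de Francia sum well-defined at all, and it is precisely where the Lerner-type argument and the sharp exponent $\gamma$ enter. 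Your proposal does not indicate how to produce such a $T$.

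Second, you suggest that permuting indices collapses all cases into a single argument. In the paper, permutation handles only the situation where exactly one coordinate has $1/p_{j_0} < 1/q_{j_0}$ while all others have $1/p_j \geq 1/q_j$ (Step~1 of Theorem~\ref{thm:main}). When several coordinates move in the ``wrong'' direction, a single Rubio de Francia pass does not suffice: the paper interpolates through a chain of intermediate tuples $\pmb{q}^{m-j_1}, \pmb{q}^{m-j_1-1}, \ldots, \pmb{q}^0 = \pmb{q}$, applying Step~1 once per bad index, and then checks that the accumulated exponents telescope to the single maximum in \eqref{eq:multextraquant} via a monotonicity argument for the functions $f_k(x)=\frac{(1-\theta_{k-1})x+\theta_{k-1}}{(1-\theta_k)x+\theta_k}$. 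This iteration is where most of the quantitative bookkeeping lives, and it also explains the prefactor $2^{m^2}$ (which becomes $2^{m^2/r}$ after undoing the rescaling) rather than the single $2$ you predicted.
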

We note that if there is equality in one of the components in $\vec{q}\geq(\vec{r},s)$, i.e., if $q=s$ or $q_j=r_j$ for some $j\in\{1,\ldots,m\}$, then we may also include the respective cases with $p=s$ or $p_j=r_j$ to the conclusion of the extrapolation result. In this case one should respectively use the interpretation $\frac{\frac{1}{q}-\frac{1}{s}}{\frac{1}{p}-\frac{1}{s}}=1$ or $\frac{\frac{1}{r_j}-\frac{1}{q_j}}{\frac{1}{r_j}-\frac{1}{p_j}}=1$. To see this, one need only note that the proof we give of the theorem already accounts for the respective cases when $\frac{1}{p}=\frac{1}{q}$ or $\frac{1}{p_j}=\frac{1}{q_j}$.

Our result is stronger than that in \cite{LMO18} in the sense that we do not have to restrict our exponents to the case where they are finite, i.e., in the initial assumption we include all the cases where $q_j=\infty$ and in the conclusion we similarly obtain all the cases where $p_j=\infty$, see also \cite{LMMOV19}. We emphasize here that we use the interpretation $\|f_j\|_{L^{q_j}(w_j^{q_j})}=\|f_jw_j\|_{L^\infty}$ in the case where $q_j=\infty$ and we need to impose the weight condition from Definition~\ref{def:weightclass} with $\frac{1}{q_j}=0$. For example, in the case $m=1$, $r=1$ and $q=s=\infty$, one has to use the condition $w\in A_{\infty,(1,\infty)}$ in the initial estimate \eqref{def:weightclass} which, following our definition, is equivalent to the condition $w^{-1}\in A_1$. This stronger result is possible due to our use of a multilinear Rubio de Francia algorithm, fully utilizing the multilinear nature of the problem. Our result also implies vector valued estimates in these ranges and we refer the reader to Section \ref{sec:app} where we elaborate on this further.

Next we make some remarks on the quantitative result \eqref{eq:multextraquant}.

Usually in applications, the increasing function will be of the form $\phi_{\vec{q}}(t)=c t^{\alpha}$ for some $c,\alpha>0$. Then we find from \eqref{eq:multextraquant} that
\[
\phi_{\vec{p},\vec{q},\vec{r},s}(t)=\tilde{c}t^{\alpha\max\left(\frac{\frac{1}{r_1}-\frac{1}{q_1}}{\frac{1}{r_1}-\frac{1}{p_1}},\ldots,\frac{\frac{1}{r_m}-\frac{1}{q_m}}{\frac{1}{r_m}-\frac{1}{p_m}},\frac{\frac{1}{q}-\frac{1}{s}}{\frac{1}{p}-\frac{1}{s}}\right)}.
\]
In the case $m=1$, $r=1$, $s=\infty$, this means that we have
\begin{equation}\label{eq:sharpaqapp}
\phi_{p,q,1,\infty}(t)=\tilde{c}t^{\alpha\max\left(\frac{p'}{q'},\frac{p}{q}\right)},
\end{equation}
and this coincides with the bound obtained in \cite{DGPP05}. This result was used in Hyt\"onen's $A_2$ theorem \cite{Hy12} to reduce proving the sharp estimate
\begin{equation}\label{eq:A2thm}
\|Tf\|_{L^p(w^p)}\lesssim[w^p]^{\frac{\max(p',p)}{p}}_{A_p}\|f\|_{L^p(w^p)}
\end{equation}
for Calder\'on-Zygmund operators $T$ to only having to prove the linear $A_2$ bound
\[
\|Tf\|_{L^2(w^2)}\lesssim[w^2]_{A_2}\|f\|_{L^2(w^2)}.
\]
Indeed, noting that $[w]_{p,(1,\infty)}=[w^p]_{A_p}^{\frac{1}{p}}$, we find that \eqref{eq:A2thm} follows from \eqref{eq:sharpaqapp} by taking $\alpha=2$ and $q=2$.

The fact that we need to extrapolate from $q=2$ to obtain the sharp bounds for Calder\'on-Zygmund operators speaks to their nature as operators revolving around their properties in $L^2$. As a contrast, we note that the estimate
\[
\|(Tf)w\|_{L^\infty}\lesssim[w^{-1}]_{A_1}\|fw\|_{L^\infty}
\]
is central, for example, for when $T$ is the Hardy-Littlewood maximal operator $M$. Indeed, by \eqref{eq:sharpaqapp} with $q=\infty$ and $\alpha=1$ and by noting that $[w]_{\infty,(1,\infty)}=[w^{-1}]_{A_1}$, this estimate extrapolates to the estimate
\[
\|Tf\|_{L^p}\lesssim[w^p]^{\frac{p'}{p}}_{A_p}\|f\|_{L^p(w^p)}
\]
for $p\in(1,\infty]$, which is precisely Buckley's sharp bound obtained for $M$. We point out here that this argument is actually circular for when $T=M$, since the proof of the quantitative estimate in the extrapolation result makes use of Buckley's sharp bound. Nonetheless, we think this example is heuristically interesting, since it exhibits how one can extrapolate away from weighted $L^\infty$ estimates. Multilinear versions of Buckley's sharp bound have been found in \cite{DLP15, LMS14} and can be recovered in a similar way, see also Theorem~\ref{thm:upendextrap}.

The remainder of this section will be dedicated to a discussion on the quantitative properties of the multilinear weight classes. We split this into two separate cases. In the first case we adopt the symmetric notation from the introduction and think in terms of $m+1$-tuples of weights and parameters satisfying a symmetric relation. In the second case we adopt the more classical approach of thinking in terms of $m$-tuples and we prove some key results for our main theorem.
\subsection{Quantitative properties of multilinear weight classes: the $m+1$-tuple case}\label{sec:twotwo}
Let $r_1,\ldots,r_m\in(0,\infty)$, $s\in(0,\infty]$, and $p_1,\ldots,p_m\in(0,\infty]$ and let $w_1,\ldots, w_m$ be weights, with $w:=\prod_{j=1}^m w_j$. In terms of symmetries, the definition of the weight class
\[
[\vec{w}]_{\vec{p},(\vec{r},s)}=\sup_Q\left(\prod_{j=1}^m\langle w_j^{-1}\rangle_{\frac{1}{\frac{1}{r_j}-\frac{1}{p_j}},Q}\right)\langle w\rangle_{\frac{1}{\frac{1}{p}-\frac{1}{s}},Q}
\]
seems to be best suited to the case where $\frac{1}{p}\leq1$. Indeed, if we set $\frac{1}{p_{m+1}}:=1-\frac{1}{p}\geq 0$, $\frac{1}{r_{m+1}}:=1-\frac{1}{s}$ and $w_{m+1}:=w^{-1}$, then we have
\[
\sum_{j=1}^{m+1}\frac{1}{p_j}=1,\qquad\prod_{j=1}^{m+1}w_j=1.
\]
The condition $(\vec{r},s)\leq\vec{p}$ is equivalent to $r_j\leq p_j$ for all $j\in\{1,\ldots,m+1\}$ and the constant for the weight class now takes the form
\[
[\vec{w}]_{\vec{p},(\vec{r},s)}=\sup_Q\prod_{j=1}^{m+1}\langle w_j^{-1}\rangle_{\frac{1}{\frac{1}{r_j}-\frac{1}{p_j}},Q}=[(w_1,\ldots,w_{m+1})]_{(p_1,\ldots,p_{m+1}),((r_1,\ldots,r_{m+1}),\infty)},
\]
where the last equality follows from the fact that the term involving the product weight in the $m+1$-linear weight class is equal to $1$. The symmetry of this last expression also emphasizes a certain permutational invariance. Indeed, if $\pi\in S_{m+1}$ is a permutation, then, since
\[
\sum_{j=1}^{m+1}\frac{1}{p_{\pi(j)}}=\sum_{j=1}^{m+1}\frac{1}{p_j}=1,\qquad\prod_{j=1}^{m+1}w_{\pi(j)}=\prod_{j=1}^{m+1}w_j=1,
\]
we have
\[
[\vec{w}]_{\vec{p},(\vec{r},s)}=[(w_{\pi(1)},\ldots,w_{\pi(m)})]_{(p_{\pi(1)},\ldots,p_{\pi(m)}),((r_{\pi(1)},\ldots,r_{\pi(m)}),r'_{\pi(m+1)})},
\]
and this will be used in the proof of our extrapolation theorem.
\begin{remark}\label{rem:qbvsb}
While we restrict ourselves to the Banach range $\frac{1}{p}\leq 1$ in this section, we do point out that our main results do also apply in the cases where $\frac{1}{p}>1$. This is facilitated by the rescaling property
\[
[\vec{w}]^{\frac{1}{\alpha}}_{\frac{\vec{p}}{\alpha},(\frac{\vec{r}}{\alpha},\frac{s}{\alpha})}=[(w_1^{\frac{1}{\alpha}},\ldots,w_m^{\frac{1}{\alpha}})]_{\vec{p},(\vec{r},s)},
\]
which, in our arguments, allows us to reduce back to the case where $\frac{1}{p}\leq 1$, see also the proof of Theorem \ref{thm:qmlre}.
\end{remark}

It will sometimes also be useful to redefine $v_j:=w_j^{-\frac{1}{\frac{1}{r_j}-\frac{1}{p_j}}}$ for $j\in\{1,\ldots,m+1\}$ so that
\[
[(w_1,\ldots,w_{m+1})]_{(p_1,\ldots,p_{m+1}),((r_1,\ldots,r_{m+1}),\infty)}=\sup_Q\prod_{j=1}^{m+1}\langle v_j\rangle^{\frac{1}{r_j}-\frac{1}{p_j}}_{1,Q}.
\]

These weight classes are governed by a certain maximal operator, see also \cite{LOPTT09}.
\begin{definition}
Given $r_1,\ldots,r_m\in(0,\infty)$, we define the $m$-sublinear Hardy-Littlewood maximal operator
\[
M_{\vec{r}}(f_1,\ldots,f_m)(x):=\sup_{Q\ni x}\prod_{j=1}^m\langle f_j\rangle_{r_j,Q}
\]
for $f_j\in L^{r_j}_{\loc}$, where the supremum is taken over all cubes $Q\subseteq\R^n$ containing $x$. Moreover, for a dyadic grid $\D$ we define
\[
M^{\D}_{\vec{r}}(f_1,\ldots,f_m)(x):=\sup_{\substack{Q\ni x\\Q\in\D}}\prod_{j=1}^m\langle f_j\rangle_{r_j,Q}
\]
for $f_j\in L^{r_j}_{\loc}$.
\end{definition}
For the relevant definitions and results regarding dyadic grids we refer the reader to \cite{LN15}. A property we need is the fact that there exist $3^n$ dyadic grids $(\D^\alpha)_{\alpha=1}^{3^n}$ such that for each cube $Q\subseteq\R^n$ there is an $\alpha$ and a cube $\tilde{Q}\in\D^\alpha$ such that $Q\subseteq \tilde{Q}$ and $|\tilde{Q}|\leq 6^n|Q|$. This implies the following:
\begin{lemma}\label{lem:dyadicmax}
Let $r_1,\ldots,r_m\in(0,\infty)$. Then there exist $3^n$ dyadic grids $(\D^\alpha)_{\alpha=1}^{3^n}$ such that
\[
M_{\vec{r}}\lesssim\sum_{\alpha=1}^{3^n}M_{\vec{r}}^{\D^\alpha}.
\]
\end{lemma}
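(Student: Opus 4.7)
The plan is to invoke directly the three-grid covering theorem recalled immediately above the statement, which already provides the whole mechanism: any cube $Q\subseteq\R^n$ is contained in some dyadic cube $\tilde Q$ belonging to one of the $3^n$ shifted grids $\D^\alpha$, with $|\tilde Q|\leq 6^n|Q|$. So the proof really is just a computation matching multilinear averages over $Q$ against averages over $\tilde Q$.

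More concretely, I would fix $f_1,\ldots,f_m$ and a point $x\in\R^n$, take an arbitrary cube $Q\ni x$, and apply the three-grid theorem to produce $\alpha\in\{1,\ldots,3^n\}$ and $\tilde Q\in\D^\alpha$ with $Q\subseteq\tilde Q$ and $|\tilde Q|\leq 6^n|Q|$. Since $|f_j|^{r_j}\mathbf 1_Q\leq |f_j|^{r_j}\mathbf 1_{\tilde Q}$, a single-factor estimate gives
\[
\langle f_j\rangle_{r_j,Q}=\Bigl(\tfrac{1}{|Q|}\textstyle\int_Q|f_j|^{r_j}\Bigr)^{1/r_j}\leq\Bigl(\tfrac{|\tilde Q|}{|Q|}\Bigr)^{1/r_j}\langle f_j\rangle_{r_j,\tilde Q}\leq 6^{n/r_j}\langle f_j\rangle_{r_j,\tilde Q}.
\]
Multiplying over $j=1,\ldots,m$ and using $\frac1r=\sum_{j=1}^m\frac1{r_j}$ yields
\[
\prod_{j=1}^m\langle f_j\rangle_{r_j,Q}\leq 6^{n/r}\prod_{j=1}^m\langle f_j\rangle_{r_j,\tilde Q}\leq 6^{n/r}M_{\vec r}^{\D^\alpha}(f_1,\ldots,f_m)(x),
\]
the last inequality because $\tilde Q\in\D^\alpha$ contains $x$.

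Taking the supremum over all $Q\ni x$ on the left-hand side, and then bounding the resulting maximum over $\alpha$ by the sum, yields
\[
M_{\vec r}(f_1,\ldots,f_m)(x)\leq 6^{n/r}\max_{1\leq\alpha\leq 3^n}M_{\vec r}^{\D^\alpha}(f_1,\ldots,f_m)(x)\leq 6^{n/r}\sum_{\alpha=1}^{3^n}M_{\vec r}^{\D^\alpha}(f_1,\ldots,f_m)(x),
\]
which is the claimed inequality with implicit constant $6^{n/r}$. There is no genuine obstacle: the only thing to be careful about is that the geometric loss $|\tilde Q|/|Q|\leq 6^n$ must be distributed across the $m$ factors, producing the total exponent $\sum_j 1/r_j=1/r$ rather than $m/\min_j r_j$ or similar; once that bookkeeping is done, the lemma is immediate from the three-grid theorem.
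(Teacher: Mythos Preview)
Your proof is correct and follows exactly the approach the paper intends: the paper does not give an explicit proof but merely states that the three-grid covering property implies the lemma (referencing \cite{LN15}), and your computation spells out precisely this implication.
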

See also \cite{LN15}.
\begin{definition}
A collection of cubes $\Sp$ in a dyadic grid is called \emph{sparse} if there is a pairwise disjoint collection of measurable sets $(E_Q)_{Q\in\Sp}$ such that $E_Q\subseteq Q$ and $|Q|\leq 2|E_Q|$.

Given $r_1,\ldots,r_m\in(0,\infty)$, for a sparse collection of cubes $\Sp$ we define the sparse operator
\[
A_{\vec{r},\Sp}(f_1,\ldots,f_m):=\sum_{Q\in\Sp}\left(\prod_{j=1}^m\langle f_j\rangle_{r_j,Q}\right)\chi_Q,
\]
and the sparse form
\[
\Lambda_{\vec{r},\Sp}(f_1,\ldots,f_m):=\sum_{Q\in\Sp}\prod_{j=1}^m\langle f_j\rangle_{r_j,Q}|Q|.
\]
\end{definition}
We point out here that the sparsity constant $2$ appearing in the estimate $|Q|\leq 2|E_Q|$ is not too important and in most situations it can be replaced by any other constant greater than $1$. Note however, that we will be considering the form $\sup_{\Sp}\Lambda_{\vec{r},\Sp}$ and here it is important that one only considers sparse collections in this supremum with the same sparsity constant. See \cite{LN15} for further properties and results regarding sparse collections of cubes.

Since this section contains results involving both $m$-tuples and $m+1$-tuples with the same parameters, it is convenient to separate these notationally. We will use the following convention: for $m+1$ parameters $\alpha_1,\ldots,\alpha_{m+1}$ we shall use the boldface notation $\pmb{\alpha}=(\alpha_1,\ldots,\alpha_{m+1})$ for $m+1$-tuples while we will use the arrow notation $\vec{\alpha}=(\alpha_1,\ldots,\alpha_m)$ for $m$-tuples.

The main result for this section is the following:
\begin{proposition}\label{prop:mainsym}
Let $r_1,\ldots,r_{m+1}\in(0,\infty)$, $p_1,\ldots,p_{m+1}\in(0,\infty]$ satisfy $\frac{1}{p_j}<\frac{1}{r_j}$ for all $j\in\{1,\ldots,m+1\}$ and $\sum_{j=1}^{m+1}\frac{1}{p_j}=1$. Moreover, let $w_1,\ldots, w_{m+1}$ be weights satisfying $\prod_{j=1}^{m+1}w_j=1$. Then the following are equivalent:
\begin{enumerate}[(i)]
\item \label{it:equ1} $\pmb{w}\in A_{\pmb{p},(\pmb{r},\infty)}$;
\item \label{it:equ2} $\|M_{\pmb{r}}\|_{L^{p_1}(w_1^{p_1})\times\cdots\times L^{p_{m+1}}(w_{m+1}^{p_{m+1}})\to L^{1,\infty}}<\infty$;
\item \label{it:equ3} $\|M_{\pmb{r}}\|_{L^{p_1}(w_1^{p_1})\times\cdots\times L^{p_{m+1}}(w_{m+1}^{p_{m+1}})\to L^1}<\infty$;
\item \label{it:equ4} $\|\sup_{\Sp}\Lambda_{\pmb{r},\Sp}\|_{L^{p_1}(w_1^{p_1})\times\cdots\times L^{p_{m+1}}(w_{m+1}^{p_{m+1}})\to\R}<\infty$.
\end{enumerate}
Moreover, we have
\begin{align}\label{eq:aleq1}
&\|M_{\pmb{r}}\|_{L^{p_1}(w_1^{p_1})\times\cdots\times L^{p_{m+1}}(w_{m+1}^{p_{m+1}})\to L^{1,\infty}}\eqsim[\pmb{w}]_{\pmb{p},(\pmb{r},\infty)},\\\label{eq:aleq2} &\|M_{\pmb{r}}\|_{L^{p_1}(w_1^{p_1})\times\cdots\times L^{p_{m+1}}(w_{m+1}^{p_{m+1}})\to L^1}\eqsim\|\sup_{\Sp}\Lambda_{\pmb{r},\Sp}\|_{L^{p_1}(w_1^{p_1})\times\cdots\times L^{p_{m+1}}(w_{m+1}^{p_{m+1}})\to\R}
\end{align}
where the implicit constants depend only on the dimension, and
\begin{equation}\label{eq:aleq3}
\|\sup_{\Sp}\Lambda_{\pmb{r},\Sp}\|_{L^{p_1}(w_1^{p_1})\times\cdots\times L^{p_{m+1}}(w_{m+1}^{p_{m+1}})\to\R}\lesssim c_{\pmb{p},\pmb{r}}[\pmb{w}]^{\max_{j=1,\ldots,m+1}\left\{\frac{\frac{1}{r_j}}{\frac{1}{r_j}-\frac{1}{p_j}}\right\}}_{\pmb{p},(\pmb{r},\infty)},
\end{equation}
where the implicit constant depends on the dimension and
\[
c_{\pmb{p},\pmb{r}}=\prod_{j=1}^{m+1} \left[\frac{\frac{1}{r_j}}{\frac{1}{r_j}-\frac{1}{p_j}}\right]^{\frac{1}{r_j}}.
\]
\end{proposition}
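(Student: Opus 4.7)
The plan is to close the cycle (i) $\Rightarrow$ (ii) $\Rightarrow$ (iv) $\Rightarrow$ (iii) $\Rightarrow$ (i), extracting quantitative constants along the way. Throughout set $s_j := 1/(1/r_j - 1/p_j)$ and $v_j := w_j^{-s_j}$, so that $[\pmb{w}]_{\pmb{p},(\pmb{r},\infty)} = \sup_Q \prod_j \langle v_j\rangle_{1,Q}^{1/s_j}$ and $w_j^{-r_j} = v_j^{1-r_j/p_j}$; these two identities will be used repeatedly.

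For \eqref{eq:aleq1}, the direction $[\pmb{w}] \leq \|M_{\pmb{r}}\|_{L^{p_1}(w_1^{p_1})\times\cdots\to L^{1,\infty}}$ follows by plugging $f_j := v_j^{1/r_j}\chi_Q$ into the weak-type inequality: a direct computation yields $\|f_j\|_{L^{p_j}(w_j^{p_j})} = (|Q|\langle v_j\rangle_{1,Q})^{1/p_j}$ and $M_{\pmb{r}}(\vec{f})(x) \geq \prod_j\langle v_j\rangle_{1,Q}^{1/r_j}$ for $x \in Q$, forcing $\prod_j\langle v_j\rangle_{1,Q}^{1/s_j} \leq \|M_{\pmb{r}}\|_{L^{1,\infty}}$. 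For the reverse, reduce to dyadic via Lemma~\ref{lem:dyadicmax}, take the disjoint maximal dyadic cubes $\{Q_k\}$ with $\prod_j\langle f_j\rangle_{r_j,Q_k} > \lambda$, and split via Hölder as $\langle f_j\rangle_{r_j,Q_k} \leq \langle f_jw_j\rangle_{p_j,Q_k}\langle w_j^{-1}\rangle_{s_j,Q_k}$. Absorb the second product into $[\pmb{w}]$, use $|Q_k| = \prod_j|Q_k|^{1/p_j}$ from $\sum 1/p_j = 1$, and conclude via discrete Hölder with exponents $p_j$ exploiting disjointness of $\{Q_k\}$.

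The equivalence \eqref{eq:aleq2} is standard: for any sparse $\Sp$ with disjoint $\{E_Q\}$, sparseness combined with $\prod_j\langle f_j\rangle_{r_j,Q} \leq M_{\pmb{r}}(\vec{f})(x)$ on $E_Q$ gives $\Lambda_{\pmb{r},\Sp}(\vec{f}) \leq 2\|M_{\pmb{r}}(\vec{f})\|_{L^1}$; conversely, a Calder\'on--Zygmund stopping-time construction inside each dyadic grid of Lemma~\ref{lem:dyadicmax} produces a sparse family whose form dominates $\|M_{\pmb{r}}^{\D}(\vec{f})\|_{L^1}$ via layer cake. Together with the trivial $L^1 \hookrightarrow L^{1,\infty}$, this closes the cycle.

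The heart of the proof is \eqref{eq:aleq3}. Setting $g_j := f_jw_j$ and $h_j := g_jv_j^{-1/p_j}$, one has $\int h_j^{p_j}v_j\,\mathrm{d}x = \|f_j\|_{L^{p_j}(w_j^{p_j})}^{p_j}$, and via $w_j^{-r_j} = v_j^{1-r_j/p_j}$,
\[
\langle f_j\rangle_{r_j,Q} = \left(\frac{v_j(Q)}{|Q|}\right)^{1/r_j}\!\left(\frac{1}{v_j(Q)}\int_Q h_j^{r_j}v_j\,\mathrm{d}x\right)^{1/r_j}.
\]
Multiplying over $j$ and using $\sum 1/p_j = 1$ together with $\sum 1/s_j = 1/r - 1$ to collapse the Lebesgue-volume powers yields
\[
\prod_j \langle f_j\rangle_{r_j,Q}\,|Q| \leq [\pmb{w}]_{\pmb{p},(\pmb{r},\infty)}\,\prod_j v_j(Q)^{1/p_j}\prod_j\!\left(\frac{1}{v_j(Q)}\int_Q h_j^{r_j}v_j\,\mathrm{d}x\right)^{1/r_j}.
\]
Summing over $Q \in \Sp$ and applying discrete Hölder with exponents $p_j$ reduces matters to one-variable weighted Carleson embeddings of the form $\sum_{Q\in\Sp} v_j(Q)\bigl(v_j(Q)^{-1}\int_Q h_j^{r_j}v_j\bigr)^{p_j/r_j} \lesssim \|h_j\|_{L^{p_j}(v_j\,\mathrm{d}x)}^{p_j}$, each valid since $p_j/r_j > 1$. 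These require $\Sp$ to be sparse with respect to each $v_j\,\mathrm{d}x$, obtained from Lebesgue sparsity via the $A_\infty$-type information carried by $[\pmb{w}]_{\pmb{p},(\pmb{r},\infty)}$. The main obstacle is the final bookkeeping: a naive distribution would accumulate factors of $[\pmb{w}]$ in every Carleson step, producing the exponent $\sum_j s_j/r_j$, but following Lerner's strategy \cite{Le08} one spends the weight on a distinguished index realizing $\max_j s_j/r_j$ while handling the remaining indices with the a priori bound; balancing these contributions yields the sharp exponent $\max_j s_j/r_j$ and the prefactor $c_{\pmb{p},\pmb{r}} = \prod_j(s_j/r_j)^{1/r_j}$ after taking $p_j$-th roots.
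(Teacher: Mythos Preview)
Your treatment of \eqref{eq:aleq1} and \eqref{eq:aleq2} is correct and essentially matches the paper (your H\"older splitting $\langle f_j\rangle_{r_j,Q}\leq\langle f_jw_j\rangle_{p_j,Q}\langle w_j^{-1}\rangle_{s_j,Q}$ is a slight variant of the paper's decomposition via Lemma~\ref{lem:wconstchar}, but it works equally well).

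The gap is in your argument for \eqref{eq:aleq3}. After your first step you arrive at
\[
\Lambda_{\pmb{r},\Sp}(\vec f)\leq[\pmb{w}]_{\pmb{p},(\pmb{r},\infty)}\sum_{Q\in\Sp}\prod_j v_j(Q)^{1/p_j}\bigl\langle h_j\bigr\rangle^{v_j}_{r_j,Q},
\]
and after discrete H\"older you need the Carleson embedding $\sum_{Q\in\Sp}v_j(Q)(\langle h_j\rangle^{v_j}_{r_j,Q})^{p_j}\lesssim\|h_j\|_{L^{p_j}(v_j)}^{p_j}$. This embedding is \emph{not} automatic for a Lebesgue-sparse family: it requires that $\{v_j(Q)\}_{Q\in\Sp}$ be Carleson with respect to $v_j\,\mathrm{d}x$, which is precisely $v_j\in A_\infty$. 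You assert that ``$A_\infty$-type information carried by $[\pmb{w}]$'' supplies this, but you do not extract any quantitative $A_\infty$ bound on the individual $v_j$ from the joint constant $[\pmb{w}]$, and even if you did, feeding such bounds into each of the $m+1$ embeddings would accumulate powers of $[\pmb{w}]$ that do not obviously collapse to $\max_j s_j/r_j$. Your final sentence about ``spending the weight on a distinguished index'' via Lerner's strategy is too vague to constitute a proof; Lerner's approach in \cite{Le08} is a pointwise domination of the maximal operator (this is what the paper generalizes in Lemma~\ref{lem:multimdomn} for Proposition~\ref{prop:sinf}), and it does not address the Carleson-constant bookkeeping you face here.

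The paper avoids this entirely via Lemma~\ref{lem:woptconst}, which sharpens your first step to
\[
\Bigl(\prod_j\langle v_j\rangle^{1/r_j}_{1,Q}\Bigr)|Q|\lesssim[\pmb{w}]^{\gamma}_{\pmb{p},(\pmb{r},\infty)}\prod_j v_j(E_Q)^{1/p_j},\qquad \gamma=\max_j\frac{1/r_j}{1/r_j-1/p_j},
\]
replacing $v_j(Q)$ by $v_j(E_Q)$ for the pairwise disjoint sets $E_Q$ at the cost of the full sharp power $\gamma$ in one stroke. The proof is purely algebraic: write $\frac{1}{r_j}=(\frac{1}{r_j}-\frac{1}{p_j})\gamma+\beta_j$ with $\beta_j\leq 0$, use the weight bound once with exponent $\gamma$, pass the negative-exponent averages from $Q$ to $E$, and then recover the missing $v_j(E_Q)^{1/p_j-\beta_j}$ factors by applying H\"older's inequality to $|E_Q|=\int_{E_Q}\prod_j v_j^{(1/r_j-1/p_j)/\alpha}\,\mathrm{d}x$ (using $\prod_j w_j=1$). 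With $v_j(E_Q)^{1/p_j}$ in hand, disjointness of the $E_Q$ lets you bound each factor by $\bigl(\int_{E_Q}M^{v_j,\D}_{r_j}(f_jv_j^{-1/r_j})^{p_j}v_j\bigr)^{1/p_j}$ and conclude via the universal $L^{p_j}(v_j)$ bound for the $v_j$-weighted dyadic maximal operator, with no $A_\infty$ input whatsoever. This is the missing idea in your argument.
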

\begin{remark}
We again point out that the condition $\pmb{w}\in A_{\pmb{p},(\pmb{r},\infty)}$ is equivalent to the condition $\vec{w}\in A_{\vec{p},(\vec{r},r_{m+1}')}$, with equal constants. Moreover, the results containing the sparse forms are formulated with the supremum taken inside of the norm. One can equivalently put the supremum outside of the norm which follows from the fact that there is a single sparse form that dominates all the other sparse forms, see \cite[Section 4]{LM17}.
\end{remark}
In the case $m=1$, $r_1=r_2=1$, the equivalence \eqref{eq:aleq1} takes the more familiar form
\[
\|M_{(1,1)}\|_{L^p(w^p)\times L^{p'}(w^{-p'})\to L^{1,\infty}}\eqsim[w^p]_{A_p}^{\frac{1}{p}}
\]
which appeared in the introduction.

We note that the estimate \eqref{eq:aleq3} was already obtained in \cite{CDO16} in the case $m=2$.

For $r_1=r$, $r_2=s'$ the estimate \eqref{eq:aleq3} takes the form
\begin{equation}\label{eq:bfpest}
\|\sup_{\Sp}\Lambda_{(r,s'),\Sp}\|_{L^p(w^p)\times L^{p'}(w^{-p'})\to\R}\lesssim \left[w^{\left(\frac{1}{p}-\frac{1}{s}\right)^{-1}}\right]_{A_{\frac{\frac{1}{r}-\frac{1}{s}}{\frac{1}{p}-\frac{1}{s}}}}^{\max\left(\frac{\frac{1}{p}-\frac{1}{s}}{\frac{1}{r}-\frac{1}{p}}\frac{1}{r},\frac{1}{s'}\right)}
\end{equation}
and when $r=1$ and $s=\infty$ we reobtain the sharp bound from the $A_2$ theorem. We wish to compare \eqref{eq:bfpest} to the bound obtained in \cite{frey16}. For their main result they prove that
\begin{equation}\label{eq:bfpest2}
\|\sup_{\Sp}\Lambda_{(r,s'),\Sp}\|_{L^p(w)\times L^{p'}(w^{1-p'})\to\R}\lesssim\Big([w]_{A_{\frac{p}{r}}}[w]_{RH_{\left(\frac{s}{p}\right)'}}\Big)^{\max\left(\frac{1}{p-r},\frac{s-1}{s-p}\right)},
\end{equation}
Our result implies that
\[
\|\sup_{\Sp}\Lambda_{(r,s'),\Sp}\|_{L^p(w)\times L^{p'}(w^{1-p'})\to\R}\lesssim \Big([w]_{A_{\frac{p}{r}}}[w]_{RH_{\left(\frac{s}{p}\right)'}}\Big)^{\left(\frac{s}{p}\right)'\max\left(\frac{\frac{1}{p}-\frac{1}{s}}{\frac{1}{r}-\frac{1}{p}}\frac{1}{r},\frac{1}{s'}\right)},
\]
see also \cite{JN91}, and this recovers the estimate \eqref{eq:bfpest2}.

Finally, we point out here that the estimate \eqref{eq:aleq3} already appears in \cite[p.~12]{LMO18} for the particular choice $\frac{1}{p_j}=\frac{1}{r_j}\frac{1}{\sum_{j=1}^{m+1}\frac{1}{r_j}}$, and it seems like this choice of $p_j$ is central for the theory of these sparse forms, see also the proof of Corollary~\ref{cor:sparseext}.

For the proof of the proposition we will require several preparatory lemmata.
\begin{lemma}\label{lem:sparsedommax}
Let $0<r_1,\ldots,r_m<\infty$. Then for each dyadic grid $\D$ and all $f_j\in L^{r_j}$ there is a sparse collection $\Sp\subseteq\D$ such that
\[
M^{\D}_{\vec{r}}(f_1,\ldots,f_m)\leq 2^{\frac{n+1}{r}}\sum_{Q\in\Sp}\prod_{j=1}^m\langle f_j\rangle_{r_j,Q}\chi_{E_Q}
\]
pointwise almost everywhere, where $\frac{1}{r}=\sum_{j=1}^m\frac{1}{r_j}$. In particular we have
\[
M^{\D}_{\vec{r}}(f_1,\ldots,f_m)\leq2^{\frac{n+1}{r}}A_{\vec{r},\Sp}(f_1,\ldots,f_m)
\]
pointwise almost everywhere.
\end{lemma}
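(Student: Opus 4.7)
The plan is to adapt the classical principal-cubes stopping-time construction to the multi-sublinear setting by first reducing to the linear case via the rescaling trick $g_j := |f_j|^{r_j}$. Since $\langle f_j\rangle_{r_j,Q} = \langle g_j\rangle_{1,Q}^{1/r_j}$, the quantity
\[
\Phi(Q) := \prod_{j=1}^m\langle f_j\rangle_{r_j,Q} = \prod_{j=1}^m\langle g_j\rangle_{1,Q}^{1/r_j}
\]
is a weighted geometric mean of ordinary averages, and the identity $\sum_j 1/r_j = 1/r$ is precisely what will convert a linear-scale threshold $2^{n+1}$ into the multilinear exponent $2^{(n+1)/r}$ appearing in the conclusion.

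After the standard reduction to working inside a fixed top dyadic cube $Q_0\in\D$ (one partitions $\R^n$ into a disjoint family of top cubes, using $f_j\in L^{r_j}$ to kill the supremum over arbitrarily large dyadic ancestors), define the principal descendants $\mathcal{P}(Q_0) := \bigcup_k\mathcal{P}_k$ by $\mathcal{P}_0 := \{Q_0\}$ and
\[
\mathcal{P}_{k+1} := \bigcup_{P\in\mathcal{P}_k}\mathrm{ch}(P),\quad \mathrm{ch}(P) := \bigl\{Q'\subsetneq P \text{ maximal dyadic} : \langle g_j\rangle_{1,Q'} > 2^{n+1}\langle g_j\rangle_{1,P} \text{ for some } j\bigr\}.
\]
Set $E_P := P\setminus\bigcup_{Q'\in\mathrm{ch}(P)}Q'$ and let $\mathcal{S} := \bigcup_{Q_0}\mathcal{P}(Q_0)$. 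The dyadic weak-$(1,1)$ bound applied to each $g_j$ separately gives
\[
\sum_{Q'\in\mathrm{ch}(P)}|Q'| \;\leq\; \sum_{j=1}^m\frac{|P|}{2^{n+1}} \;=\; \frac{m|P|}{2^{n+1}} \;\leq\; \frac{|P|}{2}
\]
(adjusting the threshold by a harmless factor if $m > 2^n$), verifying sparsity. For the pointwise bound, fix $x\in E_P$ and $Q\in\D$ with $x\in Q\subseteq P$; since $Q$ avoids every stopping child, maximality forces $Q$ itself to fail the stopping criterion, so $\langle g_j\rangle_{1,Q}\leq 2^{n+1}\langle g_j\rangle_{1,P}$ for every $j$, and therefore
\[
\Phi(Q) \;\leq\; \prod_{j=1}^m\bigl(2^{n+1}\langle g_j\rangle_{1,P}\bigr)^{1/r_j} \;=\; 2^{(n+1)/r}\Phi(P).
\]
Supremizing over such $Q$ and summing over $P\in\mathcal{S}$, where the sets $E_P$ are pairwise disjoint, yields the claimed pointwise bound, and the second displayed inequality is immediate from $\chi_{E_Q}\leq\chi_Q$.

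The main technical obstacle is not the local analysis inside $Q_0$ but rather the global step: one needs to verify that for every $x\in\R^n$ the supremum defining $M^\D_{\vec r}(\vec f)(x)$ is captured by the principal cubes, i.e., that dyadic cubes containing $x$ which extend beyond any fixed top cube do not contribute. This is where the hypothesis $f_j\in L^{r_j}$ enters, ensuring $\Phi(Q)\to 0$ on dyadic cubes of size tending to infinity, so that the supremum is essentially attained on cubes of bounded scale and a disjoint family of top cubes covering the support of $M^\D_{\vec r}(\vec f)$ exists. The choice of linear threshold $2^{n+1}$ (rather than the usual linear value $2$) is dictated by the need to simultaneously absorb the $m$-fold sum in the sparsity estimate and produce exactly the exponent $(n+1)/r$ in the final constant through the geometric-mean identity above.
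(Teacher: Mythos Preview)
Your stopping criterion---declaring $Q'$ a child of $P$ when \emph{some individual} average $\langle g_j\rangle_{1,Q'}$ exceeds $2^{n+1}\langle g_j\rangle_{1,P}$---does not control the product $\Phi$, and this breaks the first displayed inequality. You verify $\Phi(Q)\le 2^{(n+1)/r}\Phi(P)$ only for cubes $Q\subseteq P$, but the dyadic maximal function also ranges over $Q\supsetneq P$; since the sets $E_Q$ are pairwise disjoint, the right-hand side at a point $x\in E_P$ equals exactly $2^{(n+1)/r}\Phi(P)$, so you need $\Phi(Q)\le 2^{(n+1)/r}\Phi(P)$ for \emph{every} dyadic $Q\ni x$. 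This can fail badly. With $m=2$, $r_1=r_2=2$, $n=1$, take a principal cube $P'$ with $\langle g_1\rangle_{1,P'}=1$, $\langle g_2\rangle_{1,P'}=10^6$, and a subcube $P$ with $\langle g_1\rangle_{1,P}=10$, $\langle g_2\rangle_{1,P}=1$. Then $P\in\mathrm{ch}(P')$ because $g_1$ jumped by a factor $10>2^{n+1}=4$, yet $\Phi(P')=10^3$ while $2^{(n+1)/r}\Phi(P)=4\sqrt{10}<13$, so for $x\in E_P$ the cube $Q=P'$ already violates the claimed bound. In the linear case this cannot happen because the stopping rule forces the single average to increase along the principal tree; with several functions and a single-coordinate trigger, one factor can jump while another collapses, dragging the product down.

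Your construction would still salvage the weaker second inequality with $\chi_Q$ in place of $\chi_{E_Q}$ (for each $Q\ni x$ take the smallest principal ancestor and sum over the chain), but not the first. The paper instead runs the Calder\'on--Zygmund decomposition directly on the level sets $\Omega_k=\{M^{\D}_{\vec r}(\vec f)>2^{(n+1)k/r}\}$, which amounts to stopping on the product $\Phi$ itself; $\Phi$ is then automatically increasing along selected cubes and the $\chi_{E_Q}$ bound follows at once. The cost is that sparsity is no longer a union bound over $j$ but requires H\"older's inequality with exponents $r/r_j$ to control $\sum_{Q'\subseteq Q}|Q'|\prod_j\langle f_j\rangle_{r_j,Q'}^r$. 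Note also that your parenthetical fix for $m>2^n$ is not harmless for the stated constant: raising the individual threshold above $2^{n+1}$ would push the pointwise constant strictly above $2^{(n+1)/r}$. Switching your stopping rule to $\Phi(Q')>2^{(n+1)/r}\Phi(P)$ and replacing the union-bound sparsity step by the H\"older argument repairs both issues and essentially reproduces the paper's proof.
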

The proof is essentially the same as the well-known result in the case $m=1$, $r=1$.
\begin{proof}
For $k\in\Z$ we define
\[
\Omega_k:=\{x\in\R^n:M^{\D}_{\vec{r}}(f_1,\ldots,f_m)(x)>2^{\frac{n+1}{r}k}\}.
\]
By taking the maximal cubes $Q$ in $\Omega_k$ we obtain a pairwise disjoint collection $\mathcal{Q}_k\subseteq\D$ such that $\Omega_k=\bigcup_{Q\in\mathcal{Q}_k}Q$ and
\begin{equation}\label{eq:czdspm}
 2^{\frac{n+1}{r}k}<\prod_{j=1}^m\langle f_j\rangle_{r_j,Q}\leq\frac{2^{\frac{n+1}{r}(k+1)}}{2^{\frac{1}{r}}}
\end{equation}
for all $Q\in\mathcal{Q}_k$. We define $\Sp:=\cup_{k\in\Z}\mathcal{Q}_k$ and claim that $\Sp$ is a sparse collection of cubes. Indeed, for $Q\in\mathcal{Q}_k$ it follows from \eqref{eq:czdspm} that for any $Q'\in\mathcal{Q}_{k+1}$ we have
\[
\prod_{j=1}^m\langle f_j\rangle_{r_j,Q'}>2^{\frac{1}{r}}\frac{2^{\frac{n+1}{r}(k+1)}}{2^{\frac{1}{r}}}\geq2^{\frac{1}{r}}\prod_{j=1}^m\langle f_j\rangle_{r_j,Q}.
\]
Thus, by maximality of $\mathcal{Q}_k$ and H\"older's inequality with $\sum_{j=1}^m\frac{r}{r_j}=1$, we have
\begin{align*}
|\Omega_{k+1}\cap Q|&=\sum_{\substack{Q'\in\mathcal{Q}_{k+1}\\ Q'\subseteq Q}}|Q'|\leq\frac{1}{2\prod_{j=1}^m\langle f_j\rangle^r_{r_j,Q}}\sum_{\substack{Q'\in\mathcal{Q}_{k+1}\\ Q'\subseteq Q}}|Q'|\prod_{j=1}^m\langle f_j\rangle^r_{r_j,Q'}\\
&=\frac{|Q|}{2}\frac{\displaystyle\sum_{\substack{Q'\in\mathcal{Q}_{k+1}\\ Q'\subseteq Q}}\prod_{j=1}^m\left(\int_{Q'}\!|f_j|^{r_j}\,\mathrm{d}x\right)^{\frac{r}{r_j}}}{\displaystyle\prod_{j=1}^m \left(\int_Q\!|f_j|^{r_j}\,\mathrm{d}x\right)^{\frac{r}{r_j}}}\leq\frac{|Q|}{2}\frac{\displaystyle\prod_{j=1}^m\left(\int_{\Omega_{k+1}\cap Q}\!|f_j|^{r_j}\,\mathrm{d}x\right)^{\frac{r}{r_j}}}{\displaystyle\prod_{j=1}^m \left(\int_Q\!|f_j|^{r_j}\,\mathrm{d}x\right)^{\frac{r}{r_j}}}\leq\frac{|Q|}{2}.
\end{align*}
Thus, defining $E_Q:=Q\backslash\Omega_{k+1}$, we have $|Q|\leq 2|E_Q|$.

To conclude that $\Sp$ is sparse, it remains to check that $(E_Q)_{Q\in\Sp}$ is pairwise disjoint. Let $Q,Q'\in\Sp$ such that $E_Q\cap E_{Q'}\neq\emptyset$. If $Q\in\mathcal{Q}_k$ and $Q'\in\mathcal{Q}_{k'}$, we have $E_Q\subseteq\Omega_k\backslash\Omega_{k+1}$ and $E_{Q'}\subseteq\Omega_{k'}\backslash\Omega_{k'+1}$. Since $(\Omega_k\backslash\Omega_{k+1})_{k\in\Z}$ is pairwise disjoint, this means that we must have $k=k'$. Since $Q\cap Q'\neq\emptyset$, it follows from maximality of $\mathcal{Q}_k$ that $Q=Q'$, as desired.

Finally, if $x\in\R^n$ and $M^{\D}_{\vec{r}}(f_1,\ldots,f_m)(x)\neq 0$, then there is a unique $k\in\Z$ such that $2^{\frac{n+1}{r}k}<M^{\D}_{\vec{r}}(f_1,\ldots,f_m)(x)\leq 2^{\frac{n+1}{r}(k+1)}$. Hence, $x\in\Omega_k\backslash\Omega_{k+1}$ and thus there is a cube $Q\in\mathcal{Q}_k$ so that $x\in Q\backslash\Omega_{k+1}=E_Q$ and
\[
M^{\D}_{\vec{r}}(f_1,\ldots,f_m)(x)\leq 2^{\frac{n+1}{r}}2^{\frac{n+1}{r}k}<2^{\frac{n+1}{r}}\prod_{j=1}^m\langle f_j\rangle_{r_j,Q}=2^{\frac{n+1}{r}}\sum_{Q'\in\Sp}\prod_{j=1}^m\langle f_j\rangle_{r_j,Q'}\chi_{E_{Q'}}(x).
\]
This proves the assertion.
\end{proof}
The following result is a reformulation of the definition of the weight class.
\begin{lemma}\label{lem:wconstchar}
Let $r_1,\ldots,r_{m+1}\in(0,\infty)$, $p_1,\ldots,p_{m+1}\in(0,\infty]$ satisfy $\frac{1}{p_j}<\frac{1}{r_j}$ for all $j\in\{1,\ldots,m+1\}$ and $\sum_{j=1}^{m+1}\frac{1}{p_j}=1$. Moreover, let $w_1,\ldots, w_{m+1}$ be weights satisfying $\prod_{j=1}^{m+1}w_j=1$ and define $v_j:=w_j^{-\frac{1}{\frac{1}{r_j}-\frac{1}{p_j}}}$. Then $\pmb{w}\in A_{\pmb{p},(\pmb{r},\infty)}$ if and only if $v_1,\ldots,v_{m+1}$ are locally integrable and there is a constant $c>0$ such that for all cubes $Q$ we have
\[
\left(\prod_{j=1}^{m+1}\langle v_j\rangle^{\frac{1}{r_j}}_{1,Q}\right)|Q|\leq c\prod_{j=1}^{m+1}v_j(Q)^{\frac{1}{p_j}}.
\]
In this case, the optimal constant $c$ in this inequality is given by $[\pmb{w}]_{\pmb{p},(\pmb{r},\infty)}$.
\end{lemma}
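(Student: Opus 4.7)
The proof is a direct algebraic reformulation of the definition, so I would start by introducing the abbreviation $\alpha_j := \frac{1}{r_j}-\frac{1}{p_j}$, which is strictly positive by hypothesis, so that $v_j = w_j^{-1/\alpha_j}$ and consequently $w_j^{-1} = v_j^{\alpha_j}$. A one-line computation then gives
\[
\langle w_j^{-1}\rangle_{1/\alpha_j,Q} = \left(\frac{1}{|Q|}\int_Q v_j\,\mathrm{d}x\right)^{\alpha_j} = \langle v_j\rangle_{1,Q}^{\alpha_j}.
\]
This identification makes it clear that each $\langle w_j^{-1}\rangle_{1/\alpha_j,Q}$ is finite for every cube $Q$ exactly when $v_j \in L^1_{\loc}(\R^n)$, which settles the local integrability half of the equivalence.

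Next I would exploit the normalization $\sum_{j=1}^{m+1}\frac{1}{p_j}=1$ to rewrite the right-hand side of the target inequality. Since $v_j(Q) = |Q|\langle v_j\rangle_{1,Q}$, pulling out the measure factors gives
\[
\prod_{j=1}^{m+1} v_j(Q)^{1/p_j} = |Q|^{\sum_j 1/p_j}\prod_{j=1}^{m+1}\langle v_j\rangle_{1,Q}^{1/p_j} = |Q|\prod_{j=1}^{m+1}\langle v_j\rangle_{1,Q}^{1/p_j}.
\]
Combining this with the previous step, and using $\alpha_j = \frac{1}{r_j}-\frac{1}{p_j}$, yields the chain of equalities
\[
\prod_{j=1}^{m+1}\langle w_j^{-1}\rangle_{1/\alpha_j,Q} = \prod_{j=1}^{m+1}\langle v_j\rangle_{1,Q}^{\alpha_j} = \frac{\left(\prod_{j=1}^{m+1}\langle v_j\rangle_{1,Q}^{1/r_j}\right)|Q|}{\prod_{j=1}^{m+1}v_j(Q)^{1/p_j}}.
\]
Rearranging and taking the supremum over all cubes $Q$ identifies the optimal constant $c$ in the stated inequality with $[\pmb{w}]_{\pmb{p},(\pmb{r},\infty)}$, which completes the equivalence.

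Since this is entirely a bookkeeping exercise, there is no substantive obstacle; the only thing to be careful about is to invoke each of the standing hypotheses in the right place — the strict inequality $\frac{1}{p_j}<\frac{1}{r_j}$ ensures $\alpha_j>0$ so that $v_j$ is well defined, and the normalization $\sum_j 1/p_j = 1$ is what cancels the powers of $|Q|$ in the reformulation. The hypothesis $\prod_j w_j = 1$ is not actually used in the equivalence itself, but it is part of the ambient setup inherited from Section~\ref{sec:twotwo}, and I would mention this in passing.
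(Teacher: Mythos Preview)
Your proof is correct and matches the paper's approach: the paper states the lemma without proof, calling it ``a reformulation of the definition of the weight class,'' and your argument spells out exactly the algebra that makes this immediate. One small correction to your closing remark: the hypothesis $\prod_{j=1}^{m+1} w_j = 1$ \emph{is} used, since it is what reduces the weight constant from Definition~\ref{def:weightclass} to the pure product $\sup_Q \prod_{j=1}^{m+1}\langle w_j^{-1}\rangle_{1/\alpha_j,Q}$ (the term $\langle w\rangle_{\frac{1}{\frac{1}{p}-\frac{1}{s}},Q}$ with $w=\prod_j w_j$ and $s=\infty$ drops out only because $w=1$); you acknowledge this implicitly by citing the ambient setup, but it would be cleaner to say so directly.
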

The following lemma allows us to deal with weighted estimates involving sparse forms.
\begin{lemma}\label{lem:woptconst}
Let $r_1,\ldots,r_{m+1}\in(0,\infty)$, $p_1,\ldots,p_{m+1}\in(0,\infty]$ satisfy $\frac{1}{p_j}<\frac{1}{r_j}$ for all $j\in\{1,\ldots,m+1\}$ and $\sum_{j=1}^{m+1}\frac{1}{p_j}=1$. Moreover, let $w_1,\ldots, w_{m+1}$ be weights satisfying $\prod_{j=1}^{m+1}w_j=1$ with $\pmb{w}\in A_{\pmb{p},(\pmb{r},\infty)}$ and define $v_j:=w_j^{-\frac{1}{\frac{1}{r_j}-\frac{1}{p_j}}}$. Let $Q$ be a cube and let $E\subseteq Q$ such that $|Q|\leq2|E|$. Then
\begin{equation}\label{eq:a2mult}
\left(\prod_{j=1}^{m+1}\langle v_j\rangle^{\frac{1}{r_j}}_{1,Q}\right)|Q|\lesssim [\pmb{w}]^{\max_{j=1,\ldots,m+1}\left\{\frac{\frac{1}{r_j}}{\frac{1}{r_j}-\frac{1}{p_j}}\right\}}_{\pmb{p},(\pmb{r},\infty)} \prod_{j=1}^{m+1}v_j(E)^{\frac{1}{p_j}}.
\end{equation}
\end{lemma}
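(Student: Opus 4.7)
The plan is to reduce to the sharp bound of Lemma \ref{lem:wconstchar} and then replace $v_j(Q)$ by $v_j(E)$ on the right-hand side by exploiting the smallness hypothesis $|Q|\le 2|E|$. Write $a_j:=\frac{1}{r_j}-\frac{1}{p_j}>0$ and $A:=\sum_{j=1}^{m+1}a_j$; since all $1/p_j<1/r_j$ and $\sum 1/p_j=1$, one has $A>0$. Moreover, the assumption $\prod_j w_j=1$ together with $w_j=v_j^{-a_j}$ yields the pointwise identity $\prod_{j=1}^{m+1}v_j^{a_j/A}=1$, which is the key algebraic feature that lets one invert a Hölder estimate at the level of $E$.

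First I would apply Lemma \ref{lem:wconstchar} and rearrange the definition of $\langle v_j\rangle_{1,Q}$ to obtain the scalar form $\prod_j v_j(Q)^{a_j}\le[\pmb{w}]_{\pmb{p},(\pmb{r},\infty)}|Q|^{A}$. Next, using $\prod_j v_j^{a_j/A}=1$ pointwise, Hölder's inequality with exponents $A/a_j$ (which satisfy $\sum a_j/A=1$) produces the companion bound $|E|^{A}\le\prod_j v_j(E)^{a_j}$. Combining these two estimates with $|Q|\le 2|E|$ gives
\[
\prod_{j=1}^{m+1}\bigl(v_j(Q)/v_j(E)\bigr)^{a_j}\le 2^{A}[\pmb{w}]_{\pmb{p},(\pmb{r},\infty)}.
\]
Write $\lambda_j:=v_j(Q)/v_j(E)\ge 1$ (using $E\subseteq Q$), set $M:=\max_j(1/r_j)/a_j$, and note the identity $(1/r_j)/a_j-1=(1/p_j)/a_j$, so $M-1=\max_j(1/p_j)/a_j$. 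Since $\lambda_j^{a_j}\ge 1$ and $(1/p_j)/a_j\le M-1$, I would pointwise estimate $\lambda_j^{1/p_j}=(\lambda_j^{a_j})^{(1/p_j)/a_j}\le(\lambda_j^{a_j})^{M-1}$, whose product over $j$ yields $\prod_j v_j(Q)^{1/p_j}\lesssim[\pmb{w}]_{\pmb{p},(\pmb{r},\infty)}^{M-1}\prod_j v_j(E)^{1/p_j}$.

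To conclude, I would apply Lemma \ref{lem:wconstchar} once more to the left-hand side of the target inequality, giving $\prod_j\langle v_j\rangle_{1,Q}^{1/r_j}|Q|\le[\pmb{w}]_{\pmb{p},(\pmb{r},\infty)}\prod_j v_j(Q)^{1/p_j}$, and substitute the previous estimate to obtain precisely the asserted bound with exponent $M$.

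The delicate point is controlling the $\lambda_j^{1/p_j}$ simultaneously. A naive bound $\lambda_j\le(2^{A}[\pmb{w}])^{1/a_j}$ followed by multiplying would yield the much larger exponent $\sum_j(1/p_j)/a_j$ on $[\pmb{w}]$ rather than the maximum. The crucial observation is that $\lambda_j^{a_j}\ge 1$ permits inflating each factor $(\lambda_j^{a_j})^{(1/p_j)/a_j}$ uniformly to the common exponent $M-1$, whereupon the product reassembles to $\prod_j\lambda_j^{a_j}$, which is already controlled to the first power of $[\pmb{w}]_{\pmb{p},(\pmb{r},\infty)}$ from the earlier step.
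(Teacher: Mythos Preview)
Your proof is correct and uses the same core ingredients as the paper—Lemma~\ref{lem:wconstchar}, H\"older's inequality with exponents $A/a_j$ applied to the identity $\prod_j v_j^{a_j/A}=1$, and the hypothesis $|Q|\le 2|E|$—but organizes them differently. The paper splits $\prod_j\langle v_j\rangle_{1,Q}^{1/r_j}$ as $\bigl(\prod_j\langle v_j\rangle_{1,Q}^{a_j}\bigr)^{\gamma}\prod_j\langle v_j\rangle_{1,Q}^{\beta_j}$ with $\beta_j:=1/r_j-\gamma a_j\le 0$, applies the weight condition once (to the $\gamma$-power) and passes the negative-exponent factor from $Q$ to $E$, then closes with H\"older on $E$. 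You instead invoke Lemma~\ref{lem:wconstchar} twice: first in the scalar form $\prod_j v_j(Q)^{a_j}\le[\pmb{w}]|Q|^A$ together with H\"older on $E$ to control $\prod_j\lambda_j^{a_j}$, and a second time at the end to finish. The bridge in your argument is the clean observation that $\lambda_j\ge 1$ lets you inflate each exponent $(1/p_j)/a_j$ to the common value $M-1$, so that $\prod_j\lambda_j^{1/p_j}\le\bigl(\prod_j\lambda_j^{a_j}\bigr)^{M-1}$. Both routes yield the same exponent $\gamma=M$ on $[\pmb{w}]_{\pmb{p},(\pmb{r},\infty)}$; your version isolates the ``replace $Q$ by $E$'' step as a statement about the ratios $\lambda_j$, which is conceptually tidy, while the paper's single-use-of-the-weight-condition approach is slightly more direct.
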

\begin{remark}
Having Lemma~\ref{lem:wconstchar} in mind, it seems that the larger power of the weight constant in \eqref{eq:a2mult} comes from the fact that we are passing from the weighted measure of the set $Q$ to the measure of the smaller set $E$. In fact, it seems like we are only using the full weight condition $\pmb{w}\in A_{\pmb{p},(\pmb{r},\infty)}$ once and we are left with an estimate of the form
\[
\prod_{j=1}^{m+1}v_j(Q)^{\frac{1}{p_j}}\lesssim\prod_{j=1}^{m+1}v_j(E)^{\frac{1}{p_j}},
\]
where the implicit constant depends on the weights. This estimate seems to only require the weaker Fujii-Wilson $A_\infty$ condition satisfied by the weight $v_j$, but we do not pursue this further here. We refer the reader to \cite{hytonen13} where quantitative estimates involving this condition first appeared. We also point out that estimates of this type for the limited range sparse operator in the case $m=1$ have been studied in \cite{FN17,li17}. This condition has also been considered in the multilinear case in \cite{DLP15}.
\end{remark}
\begin{proof}
We set $\gamma:=\max_{j=1,\ldots,m+1}\left\{\frac{\frac{1}{r_j}}{\frac{1}{r_j}-\frac{1}{p_j}}\right\}$ and
\[
\beta_j:=\frac{1}{r_j}-\left(\frac{1}{r_j}-\frac{1}{p_j}\right)\gamma,
\]
so that $\beta_j\leq 0$ for all $j\in\{1,\ldots,m+1\}$. Thus, since $\langle v_j\rangle_{1,E}\leq2\langle v_j\rangle_{1,Q}$ by the assumptions on $E$, we have $\langle v_j\rangle_{1,Q}^{\beta_j}\leq 2^{-\beta_j}\langle v_j\rangle_{1,E}^{\beta_j}$. Then
\begin{equation}\label{eq:inmultato}
\begin{split}
\left(\prod_{j=1}^{m+1}\langle v_j\rangle^{\frac{1}{r_j}}_{1,Q}\right)|Q|&=\left(\prod_{j=1}^{m+1}\langle v_j\rangle_{1,Q}^{\frac{1}{r_j}-\frac{1}{p_j}}\right)^\gamma\left(\prod_{j=1}^{m+1}\langle v_j\rangle^{\beta_j}_{1,Q}\right)|Q|\\
&\leq[\pmb{w}]^{\gamma}_{\pmb{p},(\pmb{r},\infty)}\left(\prod_{j=1}^{m+1}\langle v_j\rangle^{\beta_j}_{1,Q}\right)|Q|\lesssim[\pmb{w}]^{\gamma}_{\pmb{p},(\pmb{r},\infty)}\left(\prod_{j=1}^{m+1}\langle v_j\rangle^{\beta_j}_{1,E}\right)|E|\\
&=[\pmb{w}]^{\gamma}_{\pmb{p},(\pmb{r},\infty)}\left(\prod_{j=1}^{m+1}v_j(E)^{\beta_j}\right)|E|^{1-\sum_{j=1}^{m+1}\beta_j}.
\end{split}
\end{equation}

Next, set $\alpha:=\sum_{j=1}^{m+1}\left(\frac{1}{r_j}-\frac{1}{p_j}\right)>0$ and $k_j:=\alpha\left(\frac{1}{r_j}-\frac{1}{p_j}\right)^{-1}$. Then
\[
\sum_{j=1}^{m+1}\frac{1}{k_j}=\frac{1}{\alpha}\sum_{j=1}^{m+1}\left(\frac{1}{r_j}-\frac{1}{p_j}\right)=1
\]
and
\[
1-\sum_{j=1}^{m+1}\beta_j=\sum_{j=1}^{m+1}\frac{1}{p_j}-\sum_{j=1}^{m+1}\frac{1}{r_j}+\gamma\sum_{j=1}^{m+1}\left(\frac{1}{r_j}-\frac{1}{p_j}\right)= (\gamma-1)\alpha
\]
so that
\[
\frac{1-\sum_{j=1}^{m+1}\beta_j}{k_j}=\left(\frac{1}{r_j}-\frac{1}{p_j}\right)(\gamma-1)=\frac{1}{p_j}-\beta_j.
\]
Thus, since $\prod_{j=1}^{m+1}v_j^{\frac{1}{r_j}-\frac{1}{p_j}}=\prod_{j=1}^{m+1}w_j=1$, it follows from H\"older's inequality that
\[
|E|^{1-\sum_{j=1}^{m+1}\beta_j}=\left(\int_E\!\prod_{j=1}^{m+1}v_j^{\frac{1}{\alpha}\left(\frac{1}{r_j}-\frac{1}{p_j}\right)}\,\mathrm{d}x\right)^{1-\sum_{j=1}^{m+1}\beta_j}\leq\prod_{j=1}^{m+1}v_j(E)^{\frac{1-\sum_{j=1}^{m+1}\beta_j}{k_j}}=\prod_{j=1}^{m+1}v_j(E)^{\frac{1}{p_j}-\beta_j}.
\]
By combining this estimate with \eqref{eq:inmultato}, we obtain \eqref{eq:a2mult}. The assertion follows.
\end{proof}
\begin{proof}[Proof of Proposition \ref{prop:mainsym}]
We set $v_j:=w_j^{-\frac{1}{\frac{1}{r_j}-\frac{1}{p_j}}}$ for $j\in\{1,\ldots,m+1\}$.

The strategy for the proof will be as follows: We will prove the equivalence of \ref{it:equ1} and \ref{it:equ2} by proving \eqref{eq:aleq1} and we will prove the equivalence of \ref{it:equ3} and \ref{it:equ4} by proving \eqref{eq:aleq2}. Then, noting that the implication \ref{it:equ3}$\Rightarrow$\ref{it:equ2} is clear, we conclude the proof by showing that \ref{it:equ1}$\Rightarrow$\ref{it:equ4} through \eqref{eq:aleq3}.

For \eqref{eq:aleq1}, for the first inequality we note that it follows from Lemma \ref{lem:dyadicmax} that it suffices to consider the estimate for $M^{\D}_{\pmb{r}}$ for a dyadic grid $\D$. First consider a finite collection $\F\subseteq\D$. Let $\lambda>0$, $f_j\in L^{p_j}(w_j^{p_j})$ and, defining $M^{\F}_{\pmb{r}}$ as $M^{\D}_{\pmb{r}}$ but with the supremum taken over all $Q\in\F$, we set
\[
\Omega^{\F}_\lambda:=\{M^{\F}_{\pmb{r}}(f_1,\ldots,f_{m+1})>\lambda\}
\]
and similarly for $\Omega^{\D}_\lambda$.

Let $\mathscr{P}$ denote the collection of those cubes $Q\in\F$ such that $\prod_{j=1}^{m+1}\langle f_j\rangle_{r_j,Q}>\lambda$ that have no dyadic ancestors in $\mathscr{F}$. Using the rule
\[
\langle h\rangle_{r,Q}=\langle h u^{-\frac{1}{r}}\rangle^u_{r,Q}\langle u\rangle^{\frac{1}{r}}_{1,Q},
\]
where $\langle h \rangle^u_{r,Q}:=\left(\frac{1}{u(Q)}\int_Q\!|h|^r u\,\mathrm{d}x\right)^{\frac{1}{r}}$,
it follows from Lemma \ref{lem:wconstchar} and the fact that $\mathscr{P}$ gives a decomposition of $\Omega^{\F}_\lambda$, that
\begin{align*}
\lambda|\Omega^{\F}_\lambda|&=\sum_{Q\in\mathscr{P}}\lambda|Q|\leq\sum_{Q\in\mathscr{P}}\left(\prod_{j=1}^{m+1}\langle f_j\rangle_{r_j,Q}\right)|Q|\\
&=\sum_{Q\in\mathscr{P}}\left(\prod_{j=1}^{m+1}\langle f_jv_j^{-\frac{1}{r_j}}\rangle^{v_j}_{r_j,Q}\langle v_j\rangle^{\frac{1}{r_j}}_{1,Q}\right)|Q|\\
&\leq[\pmb{w}]_{\pmb{p},(\pmb{r},\infty)}\sum_{Q\in\mathscr{P}}\prod_{j=1}^{m+1}\langle f_jv_j^{-\frac{1}{r_j}}\rangle^{v_j}_{r_j,Q}v_j(Q)^{\frac{1}{p_j}}\\
&\leq[\pmb{w}]_{\pmb{p},(\pmb{r},\infty)}\sum_{Q\in\mathscr{P}}\prod_{j=1}^{m+1}\left(\int_Q\! |f_j|^{p_j}v_j^{p_j\left(\frac{1}{p_j}-\frac{1}{r_j}\right)}\,\mathrm{d}x\right)^{\frac{1}{p_j}}\\
&\leq[\pmb{w}]_{\pmb{p},(\pmb{r},\infty)}\prod_{j=1}^{m+1}\|f_j\|_{L^{p_j}(w_j^{p_j})},
\end{align*}
where in the fourth step we used H\"older's inequality with $r_j\leq p_j$ and in the last step we used H\"older's inequality on the sum.

By considering an exhaustion of $\D$ of finite sets it follows from monotonicity of the measure and by taking a supremum over $\lambda>0$ that
\[
\|M^{\D}_{\pmb{r}}\|_{L^{p_1}(w_1^{p_1})\times\cdots\times L^{p_{m+1}}(w_{m+1}^{p_{m+1}})\to L^{1,\infty}}\leq[\pmb{w}]_{\pmb{p},(\pmb{r},\infty)}.
\]

For the converse inequality, fix a cube $Q$. Assuming for the moment that the $v_j$ are locally integrable, we let $0<\lambda<\prod_{j=1}^m\langle v_j\rangle^{\frac{1}{r_j}}_{1,Q}$. Setting $f_j:=v_j^{\frac{1}{r_j}}\chi_Q$, we obtain
\[
M_{\pmb{r}}(f_1,\ldots,f_{m+1})(x)\geq\prod_{j=1}^{m+1}\langle f_j\rangle_{r_j,Q}=\prod_{j=1}^{m+1}\langle v_j\rangle^{\frac{1}{r_j}}_{1,Q}>\lambda
\]
for all $x\in Q$ so that $Q\subseteq\{M_{\pmb{r}}(f_1,\ldots,f_{m+1})>\lambda\}$. Thus,
\begin{align*}
\lambda|Q|&\leq\lambda|\{M_{\pmb{r}}(f_1,\ldots,f_{m+1})>\lambda\}|\\
&\leq\|M_{\pmb{r}}\|_{L^{p_1}(w_1^{p_1})\times\cdots\times L^{p_{m+1}}(w_{m+1}^{p_{m+1}})\to L^{1,\infty}}\prod_{j=1}^{m+1}\|f_j\|_{L^{p_j}(w_j^{p_j})}\\
&=\|M_{\pmb{r}}\|_{L^{p_1}(w_1^{p_1})\times\cdots\times L^{p_{m+1}}(w_{m+1}^{p_{m+1}})\to L^{1,\infty}}\prod_{j=1}^{m+1}v_j(Q)^{\frac{1}{p_j}}.
\end{align*}
Taking a supremum over such $\lambda$, we conclude that
\begin{equation}\label{eq:lambdalim}
\left(\prod_{j=1}^{m+1}\langle v_j\rangle^{\frac{1}{r_j}}_{1,Q}\right)|Q|\leq\|M_{\pmb{r}}\|_{L^{p_1}(w_1^{p_1})\times\cdots\times L^{p_{m+1}}(w_{m+1}^{p_{m+1}})\to L^{1,\infty}}\prod_{j=1}^{m+1}v_j(Q)^{\frac{1}{p_j}}.
\end{equation}
Thus, it follows from Lemma \ref{lem:wconstchar} that
\[
[\pmb{w}]_{\pmb{p},(\pmb{r},\infty)}\leq\|M_{\pmb{r}}\|_{L^{p_1}(w_1^{p_1})\times\cdots\times L^{p_{m+1}}(w_{m+1}^{p_{m+1}})\to L^{1,\infty}},
\]
proving \eqref{eq:aleq1}. To prove our initial assumption that the $v_j$ are locally integrable, we repeat the above argument with the weights replaced by $(v_j^{-1}+\eps)^{-1}$ for $\eps>0$. As these weights are bounded, they are locally integrable. An appeal to the Monotone Convergence Theorem as $\eps\downarrow 0$ after a rearrangement of \eqref{eq:lambdalim} yields the desired conclusion.

For \eqref{eq:aleq2}, let $f_j\in L^{p_j}(w_j^{p_j})$ and let $\D$ be a dyadic grid. By Lemma \ref{lem:sparsedommax} there exists a sparse collection $\Sp\subseteq\D$ such that
\[
\|M^{\D}_{\pmb{r}}(f_1,\ldots,f_{m+1})\|_{L^1}\lesssim\|A_{\pmb{r},\Sp}(f_1,\ldots,f_{m+1})\|_{L^1}\leq\Lambda_{\pmb{r},\Sp}(f_1,\ldots,f_{m+1}).
\]
Thus, it follows from Lemma \ref{lem:dyadicmax} that
\[
\|M_{\pmb{r}}\|_{L^{p_1}(w_1^{p_1})\times\cdots\times L^{p_{m+1}}(w_{m+1}^{p_{m+1}})\to L^1}\lesssim\|\sup_{\Sp}\Lambda_{\pmb{r},\Sp}\|_{L^{p_1}(w_1^{p_1})\times\cdots\times L^{p_{m+1}}(w_{m+1}^{p_{m+1}})\to\R}.
\]
For the converse inequality, we estimate
\begin{align*}
\Lambda_{\pmb{r},\Sp}(f_1,\ldots,f_{m+1})&\leq2\sum_{Q\in\Sp}\left(\prod_{j=1}^{m+1}\langle f_j\rangle_{r_j,Q}\right)|E_Q|\\
&\leq2\sum_{Q\in\Sp}\int_{E_Q}\!M_{\pmb{r}}(f_1,\ldots,f_{m+1})\,\mathrm{d}x\\
&\leq2\|M_{\pmb{r}}(f_1,\ldots,f_m,g)\|_{L^1}.
\end{align*}
As this estimate is uniform in $\Sp$, this proves \eqref{eq:aleq2} and thus the equivalence of \ref{it:equ3} and \ref{it:equ4}.

To prove \eqref{eq:aleq3} and thus the implication \ref{it:equ1}$\Rightarrow$\ref{it:equ4}, we note that it follows from Lemma \ref{lem:woptconst} that for a sparse collection $\Sp$ in a dyadic grid $\D$ and for $\gamma=\max_{j=1,\ldots,m+1}\left\{\frac{\frac{1}{r_j}}{\frac{1}{r_j}-\frac{1}{p_j}}\right\}$ we have
\begin{align*}
\Lambda_{\pmb{r},\Sp}(f_1,\ldots,f_{m+1})&=\sum_{Q\in\Sp}\left(\prod_{j=1}^{m+1}\langle f_j\rangle_{r_j,Q}\right)|Q|\\
&=\sum_{Q\in\Sp}\left(\prod_{j=1}^{m+1}\langle f_jv_j^{-\frac{1}{r_j}}\rangle^{v_j}_{r_j,Q}\langle v_j\rangle^{\frac{1}{r_j}}_{1,Q}\right)|Q|\\
&\lesssim[\pmb{w}]^{\gamma}_{\pmb{p},(\pmb{r},\infty)}\sum_{Q\in\Sp}\prod_{j=1}^{m+1}\langle f_jv_j^{-\frac{1}{r_j}}\rangle^{v_j}_{r_j,Q}v_j(E_Q)^{\frac{1}{p_j}}\\
&\leq[\pmb{w}]^{\gamma}_{\pmb{p},(\pmb{r},\infty)}\sum_{Q\in\mathscr{P}}\prod_{j=1}^{m+1}\left(\int_{E_Q}\! M^{v_j,\D}_{r_j}(f_jv_j^{-\frac{1}{r_j}})^{p_j}v_j\,\mathrm{d}x\right)^{\frac{1}{p_j}}\\
&\leq[\pmb{w}]^{\gamma}_{\pmb{p},(\pmb{r},\infty)}\prod_{j=1}^{m+1}\|M^{v_j,\D}_{r_j}(f_jv_j^{-\frac{1}{r_j}})\|_{L^{p_j}(v_j)}\\
&\lesssim c_{\pmb{p},\pmb{r}}[\pmb{w}]^{\gamma}_{\pmb{p},(\pmb{r},\infty)}\prod_{j=1}^{m+1}\|f_j\|_{L^{p_j}(w_j^{p_j})},
\end{align*}
where in the last step we used the fact that the weighted dyadic maximal operator $M^{u,\D}_rh:=\sup_{Q\in\D}\langle h\rangle_{r,Q}^u\chi_Q$ is bounded on $L^q(u)$ for $q>r$ with constant bounded by $\left[\frac{\frac{1}{r}}{\frac{1}{r}-\frac{1}{q}}\right]^{\frac{1}{r}}$, uniformly in the weight $u$. As this estimate is uniform in the sparse collection $\Sp$, this proves \eqref{eq:aleq3}. The assertion follows.
\end{proof}

\subsection{Quantitative properties of multilinear weight classes: the $m$-tuple case}
It is sometimes convenient to emphasize this separation of the parameter $s$ from the $r_j$, as it often plays a different role from the other parameters in the proofs. The following lemma provides a way to deal with this parameter.
\begin{lemma}[Translation lemma]\label{lem:translation}
Let $r_1,\ldots,r_m\in(0,\infty)$, $s\in(0,\infty]$ and $p_1,\ldots,p_m\in(0,\infty]$ with $(\vec{r},s)\leq\vec{p}$ and let $w_1,\ldots,w_m$ be weights with $w=\prod_{j=1}^mw_j$. Then $\vec{w}\in A_{\vec{p},(\vec{r},s)}$ if and only if there are $\frac{1}{s_1},\ldots\frac{1}{s_m}$ satisfying $\frac{1}{s_j}\leq\frac{1}{p_j}$, $\sum_{j=1}^m\frac{1}{s_j}=\frac{1}{s}$, and $\vec{w}\in A_{\vec{p}(s),(\vec{r}(s),\infty)}$, where
\[
\vec{p}(s)=\left(\frac{1}{\frac{1}{p_1}-\frac{1}{s_1}},\ldots,\frac{1}{\frac{1}{p_m}-\frac{1}{s_m}}\right), \qquad\vec{r}(s)=\left(\frac{1}{\frac{1}{r_1}-\frac{1}{s_1}},\ldots,\frac{1}{\frac{1}{r_m}-\frac{1}{s_m}}\right).
\]
Moreover, in this case we have
\begin{equation}\label{eq:resclem}
[\vec{w}]_{\vec{p},(\vec{r},s)}=[\vec{w}]_{\vec{p}(s),(\vec{r}(s),\infty)}.
\end{equation}
\end{lemma}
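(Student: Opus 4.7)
My approach is to observe that the lemma is essentially an algebraic identity: once suitable $s_j$'s are produced, the defining expressions of the two weight constants become identical cube-by-cube, so both directions of the biconditional and the quantitative equality \eqref{eq:resclem} follow simultaneously.

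First I would produce the $s_j$'s. The condition $(\vec{r},s)\leq\vec{p}$ entails $\tfrac{1}{p}\geq\tfrac{1}{s}$. When $p<\infty$, set $\lambda:=(1/s)/(1/p)\in[0,1]$ and $\tfrac{1}{s_j}:=\lambda\cdot\tfrac{1}{p_j}$, which gives $\tfrac{1}{s_j}\leq\tfrac{1}{p_j}$ and $\sum_j\tfrac{1}{s_j}=\lambda/p=1/s$. When $p=\infty$, necessarily $s=\infty$, and we take $\tfrac{1}{s_j}=0$ for every $j$. Since $\tfrac{1}{s_j}\leq\tfrac{1}{p_j}\leq\tfrac{1}{r_j}$, the resulting parameters $\tfrac{1}{r_j(s)}=\tfrac{1}{r_j}-\tfrac{1}{s_j}$ and $\tfrac{1}{p_j(s)}=\tfrac{1}{p_j}-\tfrac{1}{s_j}$ are nonnegative with $r_j(s)\leq p_j(s)$, so that $(\vec{r}(s),\infty)\leq\vec{p}(s)$ and the class $A_{\vec{p}(s),(\vec{r}(s),\infty)}$ is well defined.

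Next I would verify \eqref{eq:resclem} by direct substitution. The shift by $1/s_j$ cancels in each individual difference:
\begin{equation*}
\frac{1}{r_j(s)}-\frac{1}{p_j(s)}=\Bigl(\tfrac{1}{r_j}-\tfrac{1}{s_j}\Bigr)-\Bigl(\tfrac{1}{p_j}-\tfrac{1}{s_j}\Bigr)=\frac{1}{r_j}-\frac{1}{p_j},
\end{equation*}
and summing the $\tfrac{1}{p_j(s)}$'s gives $\tfrac{1}{p(s)}=\sum_j(\tfrac{1}{p_j}-\tfrac{1}{s_j})=\tfrac{1}{p}-\tfrac{1}{s}$, whence $\tfrac{1}{p(s)}-\tfrac{1}{\infty}=\tfrac{1}{p}-\tfrac{1}{s}$. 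Consequently every averaging exponent appearing in the definition of $[\vec{w}]_{\vec{p}(s),(\vec{r}(s),\infty)}$ coincides with the corresponding one in $[\vec{w}]_{\vec{p},(\vec{r},s)}$; the two expressions are identical cube-by-cube, and taking the supremum yields \eqref{eq:resclem}. The biconditional for finiteness then drops out immediately.

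No step carries a genuine analytic obstacle; the only point requiring minor care is the degenerate case $p=s$ with $r_j=p_j$ for some $j$, in which the construction forces $\tfrac{1}{s_j}=\tfrac{1}{r_j}$ and hence $r_j(s)=\infty$. Both sides of \eqref{eq:resclem} then contain an averaging exponent equal to $\infty$, and one interprets the corresponding factor through the standard convention $\langle h\rangle_{\infty,Q}=\esssup_{x\in Q}|h(x)|$, under which the algebraic matching of the two expressions is preserved.
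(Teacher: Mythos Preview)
Your proposal is correct and follows essentially the same approach as the paper: both reduce the lemma to the observation that the shifts by $1/s_j$ cancel in each difference $\tfrac{1}{r_j}-\tfrac{1}{p_j}$ and that $\sum_j\tfrac{1}{s_j}=\tfrac{1}{s}$ forces $\tfrac{1}{p(s)}=\tfrac{1}{p}-\tfrac{1}{s}$, so the two weight constants agree cube-by-cube. You are slightly more explicit than the paper in that you construct the $s_j$'s up front (the paper defers the choice $\tfrac{1}{s_j}=\tfrac{p}{p_j}\tfrac{1}{s}$ to a remark after the proof) and you address the degenerate endpoint; note also that your algebraic verification uses only the constraint $\sum_j\tfrac{1}{s_j}=\tfrac{1}{s}$, so it applies to any admissible choice of $s_j$'s and therefore covers the backward implication for arbitrary given $s_j$'s, not just your constructed ones.
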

\begin{proof}
We have
\[
\frac{1}{p(s)}:=\sum_{j=1}^m\left(\frac{1}{p_j}-\frac{1}{s_j}\right)=\frac{1}{p}-\frac{1}{s}.
\]
it remains to note that
\[
\left(\prod_{j=1}^m\langle w_j^{-1}\rangle_{\frac{1}{\frac{1}{r_j}-\frac{1}{p_j}},Q}\right)\langle w\rangle_{\frac{1}{\frac{1}{p}-\frac{1}{s}},Q}=\left(\prod_{j=1}^m\langle w_j^{-1}\rangle_{\frac{1}{\left(\frac{1}{r_j}-\frac{1}{s_j}\right)-\left(\frac{1}{p_j}-\frac{1}{s_j}\right)},Q}\right)\langle w\rangle_{p(s),Q}.
\]
Taking a supremum over all cubes $Q$ yields \eqref{eq:resclem}, proving the assertion.
\end{proof}
We point out that the choice of the $\frac{1}{s_j}$ in the lemma is not necessarily unique if $m\neq 1$. One could, for example, take $\frac{1}{s_j}=\frac{p}{p_j}\frac{1}{s}$, but a different choice will be made later in the proof of the main result. We also note that this lemma can be used even if $\frac{1}{s}=0$. In this case it can occur that some of the $\frac{1}{s_j}$ are negative, but this does not seem to cause any problems.

Having reduced to the case where $s=\infty$, the following proposition is the main result for this subsection.
\begin{proposition}\label{prop:sinf}
Let $r_1,\ldots,r_m\in(0,\infty)$, $p_1,\ldots,p_m\in(0,\infty]$ with $(\vec{r},\infty)\leq\vec{p}$ and let $w_1,\ldots,w_m$ be weights with $w=\prod_{j=1}^mw_j$. Then the following are equivalent:
\begin{enumerate}[(i)]
\item \label{it:equone1} $\vec{w}\in A_{\vec{p},(\vec{r},\infty)}$;
\item \label{it:equone2} $\|M_{\vec{r}}\|_{L^{p_1}(w_1^{p_1})\times\cdots\times L^{p_m}(w_m^{p_m})\to L^{p,\infty}(w^p)}<\infty$.
\end{enumerate}
In this case we have
\begin{equation}\label{eq:owmaxnorm}
\|M_{\vec{r}}\|_{L^{p_1}(w_1^{p_1})\times\cdots\times L^{p_m}(w_m^{p_m})\to L^{p,\infty}(w^p)}\eqsim[\vec{w}]_{\vec{p},(\vec{r},\infty)}.
\end{equation}
Moreover, if $\vec{r}<\vec{p}$, then \ref{it:equone1} and \ref{it:equone2} are equivalent to
\begin{enumerate}[(i)]\setcounter{enumi}{2}
\item \label{it:equone3} $\|M_{\vec{r}}\|_{L^{p_1}(w_1^{p_1})\times\cdots\times L^{p_m}(w_m^{p_m})\to L^p(w^p)}<\infty$
\end{enumerate}
and we have
\begin{equation}\label{eq:multibuckley}
\|M_{\vec{r}}\|_{L^{p_1}(w_1^{p_1})\times\cdots\times L^{p_m}(w_m^{p_m})\to L^p(w^p)}\lesssim c_{\vec{p},\vec{r}}[\vec{w}]^{\max_{j=1,\ldots,m}\left\{\frac{\frac{1}{r_j}}{\frac{1}{r_j}-\frac{1}{p_j}}\right\}}_{\vec{p},(\vec{r},\infty)},
\end{equation}
where the implicit constant depends on the dimension and
\[
c_{\vec{p},\vec{r}}=\prod_{j=1}^m\left[\frac{\frac{1}{r_j}}{\frac{1}{r_j}-\frac{1}{p_j}}\right]^{\frac{1}{r_j}}.
\]
Moreover, the power of the weight constant in \eqref{eq:multibuckley} is the smallest possible one.
\end{proposition}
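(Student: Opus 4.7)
I would prove the proposition in three parts, mirroring the template of Proposition~\ref{prop:mainsym}. By Lemma~\ref{lem:dyadicmax}, in each part one reduces to bounding $M_{\vec{r}}^\D$ for a fixed dyadic grid $\D$, at the cost of a dimensional constant.

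\emph{First, the equivalence (i)$\Leftrightarrow$(ii) and the bound \eqref{eq:owmaxnorm}.} For (i)$\Rightarrow$(ii), I would argue in direct analogy with \eqref{eq:aleq1}: fix $\lambda>0$, apply the Calder\'on--Zygmund decomposition to $\Omega:=\{M_{\vec{r}}^\D(\vec{f})>\lambda\}$ to obtain pairwise disjoint maximal dyadic cubes $\{Q_k\}$ on which $\prod_j\langle f_j\rangle_{r_j,Q_k}>\lambda$, and estimate
\[
\lambda^p w^p(Q_k) \leq \prod_j\langle f_j\rangle_{r_j,Q_k}^p \, \langle w\rangle_{p,Q_k}^p \, |Q_k|.
\]
Setting $v_j:=w_j^{-1/(1/r_j-1/p_j)}$ and using the identities $\langle f_j\rangle_{r_j,Q}=\langle f_j v_j^{-1/r_j}\rangle^{v_j}_{r_j,Q}\langle v_j\rangle_{1,Q}^{1/r_j}$ and $\langle w_j^{-1}\rangle_{1/(1/r_j-1/p_j),Q}=\langle v_j\rangle^{1/r_j-1/p_j}_{1,Q}$ together with the $A_{\vec{p},(\vec{r},\infty)}$-condition, this simplifies to
\[
\lambda^p w^p(Q_k) \leq [\vec{w}]^p\prod_j\bigl(\langle f_j v_j^{-1/r_j}\rangle^{v_j}_{r_j,Q_k}\bigr)^p v_j(Q_k)^{p/p_j}.
\]
H\"older's inequality within each cube (exponents $p_j/r_j$ and its conjugate) bounds the $j$-th factor by $(\int_{Q_k}|f_j|^{p_j}w_j^{p_j})^{p/p_j}$; summing in $k$, applying H\"older with $\sum p/p_j=1$, and invoking disjointness of $\{Q_k\}$ then yields \eqref{eq:owmaxnorm} with constant $[\vec{w}]$. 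The reverse inequality (ii)$\Rightarrow$(i) follows by testing at $f_j:=v_j^{1/r_j}\chi_Q$ and supremizing over cubes, exactly as in Proposition~\ref{prop:mainsym}. The implication (iii)$\Rightarrow$(i) is immediate from the trivial embedding $L^p(w^p)\hookrightarrow L^{p,\infty}(w^p)$ combined with the above.

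\emph{Second, the strong-type bound (i)$\Rightarrow$(iii) with the sharp Buckley-type constant \eqref{eq:multibuckley}.} This is the main quantitative statement. A naive application of \eqref{eq:aleq3} to the $(m{+}1)$-tuple $(w_1,\ldots,w_m,w^{-1})$ with $r_{m+1}=1$ and $1/p_{m+1}=1-1/p$ would yield the exponent $\max_{j=1,\ldots,m+1}\{(1/r_j)/(1/r_j-1/p_j)\}$, which contains an extra contribution $p$ from the $(m{+}1)$-th coordinate; already when $m=1$, $r_1=1$, $p_1=p=3$ this gives $[\vec w]^3$ instead of Buckley's $[\vec w]^{3/2}$. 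To attain the sharp exponent $\gamma:=\max_{j=1,\ldots,m}\{(1/r_j)/(1/r_j-1/p_j)\}$, I plan to generalize the proof of Lerner \cite{Le08}: apply the pointwise sparse domination (Lemma~\ref{lem:sparsedommax}) and disjointness of $\{E_Q\}$ to obtain
\[
\|M_{\vec{r}}^\D(\vec{f})\|_{L^p(w^p)}^p \lesssim \sum_{Q\in\Sp}\prod_j\langle f_j\rangle_{r_j,Q}^p\, w^p(E_Q);
\]
use the $A_{\vec{p},(\vec{r},\infty)}$-condition once (via $w^p(E_Q)\leq w^p(Q)=\langle w\rangle_{p,Q}^p|Q|$) to trade $\langle w\rangle_{p,Q}$ for $\prod_j\langle v_j\rangle^{1/r_j-1/p_j}_{1,Q}$ at a single cost of $[\vec{w}]^p$; rearrange as above to expose $\prod_j(\langle f_j v_j^{-1/r_j}\rangle^{v_j}_{r_j,Q})^p v_j(Q)^{p/p_j}$; after H\"older over the sparse family with exponents $p_j/p$, handle each residual Carleson-type sum $\sum_{Q\in\Sp}(\langle f_j v_j^{-1/r_j}\rangle^{v_j}_{r_j,Q})^{p_j}v_j(Q)$ by the boundedness of the weighted dyadic maximal operator $M^{v_j,\D}_{r_j}$ on $L^{p_j}(v_j)$, whose norm is $\lesssim [(1/r_j)/(1/r_j-1/p_j)]^{1/r_j}$ for $p_j>r_j$ (supplying the factor $c_{\vec{p},\vec{r}}$). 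The chief technical obstacle is that the sum is taken with respect to $v_j(Q)$ rather than $v_j(E_Q)$, so a sparse-Carleson transfer of measure is needed on one coordinate; this is supplied by a refinement of Lemma~\ref{lem:woptconst} applied in that single coordinate, producing precisely the power $[\vec{w}]^\gamma$ with $\gamma$ equal to the claimed $m$-fold maximum.

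\emph{Sharpness of the exponent} in \eqref{eq:multibuckley} is verified by testing against a standard family of power weights $w_j(x)=|x|^{\alpha_j n}$ and localized functions $f_j(x)=|x|^{-\beta_j}\chi_{B(0,1)}(x)$, with the exponents $\alpha_j,\beta_j$ chosen to saturate the coordinate $j^{\ast}$ attaining the maximum, forcing the constant in \eqref{eq:multibuckley} to grow at least like $[\vec{w}]^\gamma$ as the weight degenerates.
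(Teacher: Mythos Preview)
Your argument for (i)$\Leftrightarrow$(ii) via Calder\'on--Zygmund stopping cubes is correct and essentially mirrors the paper's proof of Proposition~\ref{prop:mainsym}; the paper itself takes a slightly different route for Proposition~\ref{prop:sinf} (a direct pointwise H\"older estimate $\prod_j\langle f_j\rangle_{r_j,Q}\leq[\vec{w}]\prod_j\langle f_jw_jw^{-p/p_j}\rangle^{w^p}_{p_j,Q}$ followed by H\"older for weak Lebesgue spaces and the weak bound for $M_{p_j}^{w^p,\D}$), but both work. Your sharpness outline is also fine and matches the paper's.

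The gap is in (i)$\Rightarrow$(iii). You say you will ``generalize the proof of Lerner \cite{Le08}'', but what you actually describe is a sparse-domination argument, which is not Lerner's method. More importantly, your outline does not yield the sharp exponent $\gamma=\max_j\frac{1/r_j}{1/r_j-1/p_j}$. After your first step you have already spent $[\vec{w}]^1$ on the operator norm, and you are left with $m$ decoupled sums $\sum_{Q\in\Sp}(\langle g_j\rangle^{v_j}_{r_j,Q})^{p_j}v_j(Q)$. Converting each $v_j(Q)$ to $v_j(E_Q)$ requires an $A_\infty$-type bound on $v_j$ \emph{individually}; this is not what Lemma~\ref{lem:woptconst} provides (that lemma crucially uses the identity $\prod_{j=1}^{m+1}v_j^{1/r_j-1/p_j}=1$, which couples all coordinates, and has no single-coordinate version). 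Even in the linear case $m=1$, $r_1=1$, your scheme would give $[\vec w]\cdot[\sigma]_{A_\infty}^{1/p}$ rather than $[\vec w]^{p'}$ directly, and in the multilinear case the bookkeeping of the individual $[v_j]_{A_\infty}$ in terms of $[\vec{w}]$ does not collapse to $[\vec w]^{\gamma}$ in any obvious way once the coordinates have been separated by H\"older.

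The paper obtains \eqref{eq:multibuckley} by a genuinely different mechanism: Lemma~\ref{lem:multimdomn} establishes a \emph{pointwise factorization}
\[
M_{\vec{r}}(f_1,\ldots,f_m)\leq[\vec{w}]^{\gamma}\prod_{j=1}^m N_{p_j,r_j,\vec{w}}(f_j),
\]
where each $N_{p_j,r_j,\vec{w}}$ is a composition of two weighted dyadic maximal operators (one with respect to $v_j$, one with respect to $w^p$) and is bounded on $L^{p_j}(w_j^{p_j})$ with constant $\lesssim\big[\frac{1/r_j}{1/r_j-1/p_j}\big]^{1/r_j}$. This is the true multilinear generalization of Lerner's argument in \cite{Le08}, and the sharp power $\gamma$ arises from a single algebraic manipulation \emph{before} the coordinates are split apart by H\"older. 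The key inequality is
\[
\prod_{j=1}^m\langle v_j\rangle_{1,Q}^{1/r_j}\leq[\vec{w}]^{\gamma}\prod_{j=1}^m\bigg(\frac{\langle w_j^{-r_j}w^{r_jp/p_j}\rangle_{1,Q}}{\langle w^p\rangle_{1,Q}}\bigg)^{\frac{1/p_j\cdot 1/r_j}{1/r_j-1/p_j}},
\]
which you do not have and which does not follow from Lemma~\ref{lem:woptconst}. Your plan needs to replace the sparse scheme for (iii) with this pointwise factorization.
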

\begin{remark}
The equivalence \eqref{eq:owmaxnorm} is also contained in the full range version in \cite[Theorem 3.3]{LOPTT09} in the case where the $p_j$ are finite, and the limited range version is proven in \cite[Proposition 21]{BM17}, but here the cases $p_j=\infty$ are only treated when $w_j=1$.

For our result here we use the interpretation that for $q=\infty$ and a weight $u$ we have $\|h\|_{L^q(u^q)}=\|h\|_{L^{q,\infty}(u^q)}=\|hu\|_{L^\infty}$.
\end{remark}
\begin{remark}
The estimate \eqref{eq:multibuckley} is a generalization of Buckley's sharp weighted bound for the Hardy-Littlewood maximal operator. It can be proven using the sparse domination we obtained in Lemma~\ref{lem:sparsedommax}, but we present an altogether different proof which generalizes an approach due to Lerner \cite{Le08}. This construction is important, as it turns out to be key for our multilinear Rubio de Francia algorithm.

In the case $r_1=\ldots=r_m=1$, the sharp bound \eqref{eq:multibuckley} recovers the sharp bound obtained by Li, Moen, and Sun in \cite{LMS14} where sparse domination techniques were used. To see this, note given weights $w_1,\ldots,w_m$ and setting $v_{\vec{w}}:=\prod_{j=1}^m w_j^{\frac{p}{p_j}}$, the multilinear weight constant they used is defined as
\begin{equation}\label{eq:frmultiwdef}
[\vec{w}]_{A_{\vec{p}}}:=\sup_{Q}\left(\frac{1}{|Q|}\int_Q\!v_{\vec{w}}\,\mathrm{d}x\right)\prod_{j=1}^m\left(\frac{1}{|Q|}\int_Q\!w_j^{1-p_j'}\,\mathrm{d}x\right)^{\frac{p}{p_j'}}.
\end{equation}
Writing $\vec{1}=(1,\ldots,1)$, the sharp result they prove is
\begin{equation}\label{eq:lmssharpbound}
\|M_{\vec{1}}\|_{L^{p_1}(w_1)\times\cdots\times L^{p_m}(w_m)\to L^p(v_{\vec{w}})}\lesssim [\vec{w}]_{A_{\vec{p}}}^{\max_{j=1,\ldots,m}\left\{\frac{p_j'}{p}\right\}}.
\end{equation}
for all $p_1,\ldots,p_m\in(1,\infty)$ and $\vec{w}\in A_{\vec{p}}$. To compare this to our result, we replace the $w_j$ by $w_j^{p_j}$ and note that $v_{\vec{w}}=\prod_{j=1}^m(w_j^{p_j})^{\frac{p}{p_j}}=w^p$, $[(w_1^{p_1},\ldots,w_m^{p_m})]_{A_{\vec{p}}}=[\vec{w}]^p_{\vec{p},(\vec{1},\infty)}.$
Thus, \eqref{eq:lmssharpbound} coincides with our bound found in \eqref{eq:multibuckley} when $\vec{r}=\vec{1}$.
\end{remark}
\begin{lemma}\label{lem:multimdomn}
Let $r_1,\ldots,r_m\in(0,\infty)$, $p_1,\ldots,p_m\in(0,\infty]$ with $\vec{r}<\vec{p}$ and let $w_1,\ldots,w_m$ be weights with $w=\prod_{j=1}^mw_j$ and $\vec{w}\in A_{\vec{p},(\vec{r},\infty)}$. Then there exist sublinear operators $N_{p_j,r_j,\vec{w}}:L^{p_j}(w_j^{p_j})\to L^{p_j}(w_j^{p_j})$ so that for any $f_j\in L^{p_j}(w_j^{p_j})$ we have
\begin{equation}\label{eq:multimdomn}
M_{\vec{r}}(f_1,\ldots,f_m)\leq[\vec{w}]^{\max_{j=1,\ldots,m}\left\{\frac{\frac{1}{r_j}}{\frac{1}{r_j}-\frac{1}{p_j}}\right\}}_{\vec{p},(\vec{r},\infty)}\prod_{j=1}^m N_{p_j,r_j,\vec{w}}(f_j).
\end{equation}
Moreover, $N_{p_j,r_j,\vec{w}}$ satisfies
\[
\|N_{p_j,r_j,\vec{w}}\|_{L^{p_j}(w_j^{p_j})\to L^{p_j}(w_j^{p_j})}\lesssim \left[\frac{\frac{1}{r_j}}{\frac{1}{r_j}-\frac{1}{p_j}}\right]^{\frac{1}{r_j}}.
\]
\end{lemma}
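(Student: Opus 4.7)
The plan is to construct $N_{p_j,r_j,\vec{w}}$ as weighted maximal operators adapted to the dual weights $v_j := w_j^{-1/(1/r_j - 1/p_j)}$, and then verify both an operator-norm bound and a pointwise domination. Concretely, take
\[
N_{p_j,r_j,\vec{w}}(f_j)(x) := v_j(x)^{1/r_j}\,M^{v_j}_{r_j}\!\bigl(f_j v_j^{-1/r_j}\bigr)(x),
\]
where $M^{v_j}_{r_j}(g)(x):=\sup_{Q\ni x}\langle g\rangle^{v_j}_{r_j,Q}$ is the $v_j$-weighted $r_j$-maximal operator.

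For the operator-norm bound the identity $w_j^{p_j} = v_j^{1 - p_j/r_j}$ yields $v_j^{p_j/r_j}w_j^{p_j} = v_j$, so
\[
\|N_{p_j,r_j,\vec{w}}(f_j)\|_{L^{p_j}(w_j^{p_j})} = \|M^{v_j}_{r_j}(f_j v_j^{-1/r_j})\|_{L^{p_j}(v_j)}.
\]
Setting $g := f_j v_j^{-1/r_j}$ the same identity gives $\|g\|_{L^{p_j}(v_j)} = \|f_j\|_{L^{p_j}(w_j^{p_j})}$, and the required estimate follows from the standard uniform-in-weight bound $\|M^{v_j}_{r_j}\|_{L^{p_j}(v_j)\to L^{p_j}(v_j)}\lesssim [(1/r_j)/(1/r_j-1/p_j)]^{1/r_j}$ recalled at the end of the proof of Proposition~\ref{prop:mainsym}.

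For the pointwise bound, Lemma~\ref{lem:dyadicmax} reduces matters to $M^{\D}_{\vec{r}}$ for a fixed dyadic grid $\D$. For any dyadic $Q\ni x$ the factorization
\[
\prod_{j=1}^m\langle f_j\rangle_{r_j,Q} = \Bigl(\prod_{j=1}^m\langle v_j\rangle^{1/r_j}_{1,Q}\Bigr)\prod_{j=1}^m\langle f_j v_j^{-1/r_j}\rangle^{v_j}_{r_j,Q} \leq \Bigl(\prod_{j=1}^m\langle v_j\rangle^{1/r_j}_{1,Q}\Bigr)\prod_{j=1}^m M^{v_j}_{r_j}(f_j v_j^{-1/r_j})(x)
\]
reduces \eqref{eq:multimdomn} to proving, for almost every $x\in Q$,
\[
\prod_{j=1}^m\langle v_j\rangle^{1/r_j}_{1,Q}\lesssim [\vec{w}]^{\gamma}_{\vec{p},(\vec{r},\infty)}\prod_{j=1}^m v_j(x)^{1/r_j},\qquad \gamma:=\max_j\tfrac{1/r_j}{1/r_j-1/p_j}.
\]

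This last inequality is the heart of the argument and is the main obstacle, since averages over $Q$ need not be controlled pointwise by a single value of $v_j$. To handle it I would adapt Lerner's principal-cube construction from \cite{Le08} to the $m$-linear setting, selecting a stopping family of dyadic subcubes where the vector of averages $(\langle v_1\rangle_{1,\cdot},\ldots,\langle v_m\rangle_{1,\cdot})$ doubles geometrically, so that exhaustion along the chain of principal ancestors of $x$ produces the pointwise factor $\prod_j v_j(x)^{1/r_j}$ for a.e.\ $x\in Q$. The residual contribution from the stopping is then dealt with by splitting $\langle v_j\rangle^{1/r_j}_{1,Q}=\langle v_j\rangle^{1/r_j-1/p_j}_{1,Q}\cdot\langle v_j\rangle^{1/p_j}_{1,Q}$, invoking the weight condition $\prod_j\langle v_j\rangle^{1/r_j-1/p_j}_{1,Q}\leq [\vec{w}]_{\vec{p},(\vec{r},\infty)}\,\langle w\rangle^{-1}_{p,Q}$, and performing the same exponent-balancing optimisation as in the proof of Lemma~\ref{lem:woptconst} in order to extract the correct maximal exponent $\gamma$ while absorbing the $\langle w\rangle^{-1}_{p,Q}$ factor against the geometric sum. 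Sharpness of $\gamma$ is then inherited from the sharpness of Lemma~\ref{lem:woptconst}.
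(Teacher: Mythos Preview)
Your proposal has a genuine gap: the pointwise inequality you reduce everything to,
\[
\prod_{j=1}^m\langle v_j\rangle^{1/r_j}_{1,Q}\;\lesssim\;[\vec{w}]^{\gamma}_{\vec{p},(\vec{r},\infty)}\prod_{j=1}^m v_j(x)^{1/r_j}\quad\text{for a.e.\ }x\in Q,
\]
is simply false, and no stopping-time construction can rescue it. Already in the linear case $m=1$, $r_1=1$, $p_1=2$, it would force $v_1=w_1^{-2}\in A_1$ whenever $w_1^2\in A_2$; but with $w_1(x)=|x|^{-1/4}$ one has $w_1^2=|x|^{-1/2}\in A_2$ while $v_1=|x|^{1/2}\notin A_1$ (its average over $(-1,1)$ is positive, its essential infimum is zero). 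Equivalently, testing your candidate $N(f)=v_1\,M^{v_1}(fv_1^{-1})$ on $f=v_1\chi_Q$ gives $M^{v_1}(fv_1^{-1})\equiv 1$ on $Q$, so \eqref{eq:multimdomn} would read $\langle v_1\rangle_{1,Q}\le[\vec w]^\gamma v_1(x)$, which fails at the zero of $v_1$. A principal-cube argument controls averages of $v_j$ over stopping cubes by averages over their parents, not by pointwise values of $v_j$; there is nothing to telescope down to $v_j(x)$.

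The paper's proof avoids this by taking $N_{p_j,r_j,\vec w}$ to be a \emph{composition} of two weighted maximal operators: an inner $M^{v_j}_{r_j}$ as you have, followed by an outer $M^{w^p}_{t_j}$ (with $t_j=\tfrac{1/r_j-1/p_j}{1/(p_jr_j)}$) applied after multiplying by $v_j^{1/p_j}w^{-p/p_j}$. The key estimate is then an inequality purely between \emph{averages},
\[
\prod_{j=1}^m\langle v_j\rangle^{1/r_j}_{1,Q}\;\le\;[\vec w]^{\gamma}_{\vec p,(\vec r,\infty)}\prod_{j=1}^m\Bigl(\tfrac{\langle w_j^{-r_j}w^{r_jp/p_j}\rangle_{1,Q}}{\langle w^p\rangle_{1,Q}}\Bigr)^{\frac{1/(p_jr_j)}{1/r_j-1/p_j}},
\]
which is proved by H\"older's inequality and the definition of the weight constant (this is the multilinear analogue of Lerner's trick in \cite{Le08}). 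The ratio on the right is precisely a $w^p$-weighted average, so the outer maximal operator converts it into a pointwise bound without ever demanding an $A_1$-type control on the $v_j$. Your single-maximal ansatz omits this outer layer, which is exactly what makes the construction work.
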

\begin{proof}
We first prove this result for the dyadic maximal operator $M_{\vec{r}}^{\D}$ for a dyadic grid $\D$ to obtain the appropriate operators $N^{\D}_{p_j,r_j,\vec{w}}$. Then it follows from Lemma \ref{lem:dyadicmax} that
\begin{equation}\label{ref:dyadicreduc}
M_{\vec{r}}(\vec{f})\lesssim \sum_{\alpha=1}^{3^n}\prod_{j=1}^mN^{\D_\alpha}_{p_j,r_j,\vec{w}}(f_j)\leq \prod_{j=1}^m\sum_{\alpha=1}^{3^n}N^{\D_\alpha}_{p_j,r_j,\vec{w}}(f_j).
\end{equation}
The result then follows by setting
\[
N_{p_j,r_j,\vec{w}}:=c\sum_{\alpha=1}^{3^n}N^{\D_\alpha}_{p_j,r_j,\vec{w}},
\]
where $c$ is an appropriate constant determined by the implicit constant in \eqref{ref:dyadicreduc}.

Now, fix a dyadic grid $\D$. Let $\gamma:=\max_{j=1,\ldots,m}\left\{\frac{\frac{1}{r_j}}{\frac{1}{r_j}-\frac{1}{p_j}}\right\}$, let $Q\in\D$, and set $v_j:=w_j^{-\frac{1}{\frac{1}{r_j}-\frac{1}{p_j}}}$. Since $\prod_{j=1}^m w_j^{-1}w^{\frac{\frac{1}{p_j}}{\frac{1}{p}}}=\left(\prod_{j=1}^m w_j^{-1}\right)w=1$, it follows from H\"older's inequality that
\begin{align*}
1&=\langle 1\rangle^{\gamma-1}_{\frac{1}{\sum_{j=1}^m\frac{1}{r_j}},Q}\leq\prod_{j=1}^m\langle w_j^{-1}w^{\frac{\frac{1}{p_j}}{\frac{1}{p}}}\rangle^{\gamma-1}_{r_j,Q}=\prod_{j=1}^m\langle w_j^{-1}w^{\frac{\frac{1}{p_j}}{\frac{1}{p}}}\rangle^{\gamma-\frac{\frac{1}{r_j}}{\frac{1}{r_j}-\frac{1}{p_j}}}_{r_j,Q}\langle w_j^{-1}w^{\frac{\frac{1}{p_j}}{\frac{1}{p}}}\rangle^{\frac{\frac{1}{p_j}}{\frac{1}{r_j}-\frac{1}{p_j}}}_{r_j,Q}\\
&\leq\prod_{j=1}^m\left(\langle w_j^{-1}\rangle_{\frac{1}{\frac{1}{r_j}-\frac{1}{p_j}}}\langle w^{\frac{\frac{1}{p_j}}{\frac{1}{p}}}\rangle_{p_j,Q}\right)^{\gamma-\frac{\frac{1}{r_j}}{\frac{1}{r_j}-\frac{1}{p_j}}}\langle w_j^{-1}w^{\frac{\frac{1}{p_j}}{\frac{1}{p}}}\rangle^{\frac{\frac{1}{p_j}}{\frac{1}{r_j}-\frac{1}{p_j}}}_{r_j,Q}\\
&=\Big(\prod_{j=1}^m \left(\langle v_j\rangle^{\frac{1}{r_j}-\frac{1}{p_j}}_{1,Q}\langle w^p\rangle^{\frac{1}{p_j}}_{1,Q}\right)^{\gamma-\frac{\frac{1}{r_j}}{\frac{1}{r_j}-\frac{1}{p_j}}}\Big)\prod_{j=1}^m \langle w_j^{-r_j}w^{\frac{\frac{1}{p_j}}{\frac{1}{p}}r_j}\rangle^{\frac{\frac{1}{p_j}\frac{1}{r_j}}{\frac{1}{r_j}-\frac{1}{p_j}}}_{1,Q}.
\end{align*}
This implies that
\begin{align*}
\prod_{j=1}^m\langle v_j\rangle_{1,Q}^{\frac{1}{r_j}}&\leq\frac{[\vec{w}]^\gamma_{\vec{p},(\vec{r},\infty)}}{\Big(\displaystyle\prod_{j=1}^m\langle v_j\rangle^{\left(\frac{1}{r_j}-\frac{1}{p_j}\right)\gamma-\frac{1}{r_j}}_{1,Q}\Big)\langle w\rangle_{p,Q}^\gamma}\\ &=\frac{[\vec{w}]^\gamma_{\vec{p},(\vec{r},\infty)}}{\displaystyle\prod_{j=1}^m \left(\langle v_j\rangle^{\frac{1}{r_j}-\frac{1}{p_j}}_{1,Q}\langle w^p\rangle^{\frac{1}{p_j}}_{1,Q}\right)^{\gamma-\frac{\frac{1}{r_j}}{\frac{1}{r_j}-\frac{1}{p_j}}}} \prod_{j=1}^m\left(\frac{1}{\langle w^p\rangle_{1,Q}}\right)^{\frac{\frac{1}{p_j}\frac{1}{r_j}}{\frac{1}{r_j}-\frac{1}{p_j}}}\\
&\leq[\vec{w}]^\gamma_{\vec{p},(\vec{r},\infty)}\prod_{j=1}^m\bigg(\frac{\langle w_j^{-r_j}w^{\frac{\frac{1}{p_j}}{\frac{1}{p}}r_j}\rangle_{1,Q}}{\langle w^p\rangle_{1,Q}}\bigg)^{\frac{\frac{1}{p_j}\frac{1}{r_j}}{\frac{1}{r_j}-\frac{1}{p_j}}}.
\end{align*}
Thus, for $f_j\in L^{p_j}(w_j^{p_j})$ and any $x\in Q$, we have
\begin{equation}\label{eq:splitmultibuck}
\begin{split}
\prod_{j=1}^m\langle f_j\rangle_{r_j,Q}&=\prod_{j=1}^m\langle f_j v_j^{-\frac{1}{r_j}}\rangle^{v_j}_{r_j,Q}\langle v_j\rangle^{\frac{1}{r_j}}_{1,Q}\\
&\leq[\vec{w}]^\gamma_{\vec{p},(\vec{r},\infty)}\prod_{j=1}^m\Bigg(\frac{\inf_{y\in Q}M^{v_j,\D}_{r_j}(f_jv_j^{-\frac{1}{r_j}})(y)^{\frac{\frac{1}{r_j}-\frac{1}{p_j}}{\frac{1}{p_j}\frac{1}{r_j}}}\langle w_j^{-r_j}w^{\frac{\frac{1}{p_j}}{\frac{1}{p}}r_j}\rangle_{1,Q}}{\langle w^p\rangle_{1,Q}}\Bigg)^{\frac{\frac{1}{p_j}\frac{1}{r_j}}{\frac{1}{r_j}-\frac{1}{p_j}}}\\
&\leq[\vec{w}]^\gamma_{\vec{p},(\vec{r},\infty)}\prod_{j=1}^m M^{w^p,\D}_{\frac{\frac{1}{r_j}-\frac{1}{p_j}}{\frac{1}{p_j}\frac{1}{r_j}}}(M^{v_j,\D}_{r_j}(f_jv_j^{-\frac{1}{r_j}})v_j^{\frac{1}{p_j}}w^{-\frac{\frac{1}{p_j}}{\frac{1}{p}}})(x).
\end{split}
\end{equation}
Setting
\[
N^{\D}_{p_j,r_j,\vec{w}}(f_j):=M^{w^p,\D}_{\frac{\frac{1}{r_j}-\frac{1}{p_j}}{\frac{1}{p_j}\frac{1}{r_j}}}(M^{v_j,\D}_{r_j}(f_jv_j^{-\frac{1}{r_j}})v_j^{\frac{1}{p_j}}w^{-\frac{\frac{1}{p_j}}{\frac{1}{p}}})w^{\frac{\frac{1}{p_j}}{\frac{1}{p}}}w_j^{-1}
\]
and by taking a supremum over all $Q$ containing $x$ in \eqref{eq:splitmultibuck} we have proven \eqref{eq:multimdomn} in the dyadic case. We remark here that in the case that $\frac{1}{p_j}=0$, we use the interpretation
\[
N^{\D}_{\infty,r_j,\vec{w}}(f_j)=\|M^{v_j,\D}_{r_j}(f_jv_j^{-\frac{1}{r_j}})\|_{L^\infty}w_j^{-1}.
\]

Noting that
\[
\|M^{u,\D}_{\frac{\frac{1}{r}-\frac{1}{q}}{\frac{1}{q}\frac{1}{r}}}(h)\|_{L^{q}(u)}\lesssim\left(\frac{q}{r}\right)^{\frac{\frac{1}{q}\frac{1}{r}}{\frac{1}{r}-\frac{1}{q}}}\|h\|_{L^q(u)}=e^{\frac{\log q-\log r}{q-r}}\|h\|_{L^q(u)}\leq e^{\frac{1}{r}}\|h\|_{L^q(u)},
\]
for the case $\frac{1}{p_j}>0$, we compute
\begin{align*}
\|N^{\D}_{p_j,r_j,\vec{w}}(f_j)\|_{L^{p_j}(w_j^{p_j})}&=\|M^{w^p,\D}_{\frac{\frac{1}{r_j}-\frac{1}{p_j}}{\frac{1}{p_j}\frac{1}{r_j}}}(M^{v_j,\D}_{r_j}(f_jv_j^{-\frac{1}{r_j}})v_j^{\frac{1}{p_j}}w^{-\frac{p}{p_j}})\|_{L^{p_j}(w^p)}\\
&\lesssim\|M^{v_j,\D}_{r_j}(f_jv_j^{-\frac{1}{r_j}})v_j^{\frac{1}{p_j}}w^{-\frac{p}{p_j}}\|_{L^{p_j}(w^p)}\\
&=\|M^{v_j,\D}_{r_j}(f_jv_j^{-\frac{1}{r_j}})\|_{L^{p_j}(v_j)}\\
&\lesssim \left[\frac{\frac{1}{r_j}}{\frac{1}{r_j}-\frac{1}{p_j}}\right]^{\frac{1}{r_j}}\|f_jv_j^{-\frac{1}{r_j}}\|_{L^{p_j}(v_j)}\\
&=\left[\frac{\frac{1}{r_j}}{\frac{1}{r_j}-\frac{1}{p_j}}\right]^{\frac{1}{r_j}}\|f_j\|_{L^{p_j}(w_j^{p_j})},
\end{align*}
and for the case $\frac{1}{p_j}=0$, we compute
\[
\|N^{\D}_{\infty,r_j,\vec{w}}(f_j)w_j\|_{L^\infty}=\|M^{v_j,\D}_{r_j}(f_jv_j^{-\frac{1}{r_j}})\|_{L^\infty}\leq\|f_jv_j^{-\frac{1}{r_j}}\|_{L^\infty}=\|f_jw_j\|_{L^\infty}.
\]
The assertion follows.
\end{proof}
\begin{proof}[Proof of Proposition \ref{prop:sinf}]
We will prove the equivalence of \ref{it:equone1} and \ref{it:equone2} by proving \eqref{eq:owmaxnorm}.

For $``\lesssim"$, we note that it follows from Lemma \ref{lem:dyadicmax} that it suffices to prove the estimate for $M_{\vec{r}}^{\D}$ for a fixed dyadic grid $\D$. Note that by H\"older's inequality we have $\langle f_j\rangle_{r_j,Q}\leq\langle f_jw_j\rangle_{p_j,Q}\langle w_j^{-1}\rangle_{\frac{1}{\frac{1}{r_j}-\frac{1}{p_j}},Q}$ for a cube $Q$, so that
\[
\prod_{j=1}^m\langle f_j\rangle_{r_j,Q}\leq[\vec{w}]_{\vec{p},(\vec{r},\infty)}\langle w\rangle^{-1}_{p,Q}\prod_{j=1}^m\langle f_jw_j\rangle_{p_j,Q}=[\vec{w}]_{\vec{p},(\vec{r},\infty)}\prod_{j=1}^m\langle f_jw_jw^{-\frac{p}{p_j}}\rangle^{w^p}_{p_j,Q}.
\]
Thus, by H\"older's inequality for weak type Lebesgue spaces, we have
\begin{align*}
\|M^{\D}_{\vec{r}}(f_1,\ldots,f_m)\|_{L^{p,\infty}(w^p)}&\leq[\vec{w}]_{\vec{p},(\vec{r},\infty)}\|\prod_{j=1}^m M_{p_j}^{w^p,\D}(f_jw_jw^{-\frac{p}{p_j}})\|_{L^{p,\infty}(w^p)}\\
&\lesssim[\vec{w}]_{\vec{p},(\vec{r},\infty)}\prod_{j=1}^m\|M_{p_j}^{w^p,\D}(f_jw_jw^{-\frac{p}{p_j}})\|_{L^{p_j,\infty}(w^p)}\\
&\lesssim[\vec{w}]_{\vec{p},(\vec{r},\infty)}\prod_{j=1}^m\|f_j\|_{L^{p_j}(w_j^{p_j})},
\end{align*}
where we used the fact that the weighted dyadic maximal operator $M^{u,\D}_q$ is bounded $L^q(u)\to L^{q,\infty}(u)$ with constant uniform in $q$ and the weight $u$. Thus, we have shown that
\[
\|M_{\vec{r}}\|_{L^{p_1}(w_1^{p_1})\times\cdots\times L^{p_m}(w_m^{p_m})\to L^{p,\infty}(w^p)}\lesssim[\vec{w}]_{\vec{p},(\vec{r},\infty)}.
\]

For the converse inequality, fix a cube $Q$ and let $f_j\in L^{p_j}(w_j^{p_j})$. Letting $0<\lambda<\prod_{j=1}^m\langle f_j\rangle_{r_j,Q}$, we have
\[
M_{\vec{r}}(f_1,\ldots,f_m)(x)\geq\prod_{j=1}^m\langle f_j\rangle_{r_j,Q}>\lambda
\]
for all $x\in Q$ so that $Q\subseteq\{M_{\vec{r}}(f_1,\ldots,f_m)>\lambda\}$. Hence,
\begin{align*}
\lambda \langle w\rangle_{p,Q}&\leq|Q|^{-\frac{1}{p}}\lambda (w^p(\{M_{\vec{r}}(f_1,\ldots,f_m)>\lambda\}))^{\frac{1}{p}}\\
&\leq\|M_{\vec{r}}\|_{L^{p_1}(w_1^{p_1})\times\cdots\times L^{p_m}(w_m^{p_m})\to L^{p,\infty}(w^p)}\prod_{j=1}^m|Q|^{-\frac{1}{p_j}}\|f_j\|_{L^{p_j}(w^{p_j})}.
\end{align*}
Taking a supremum over such $\lambda$ and by replacing $f_j$ with $\chi_Q f_j$, we conclude that
\begin{equation}\label{eq:fjmreq}
\left(\prod_{j=1}^m\langle f_j\rangle_{r_j,Q}\right)\langle w\rangle_{p,Q}\leq\|M_{\vec{r}}\|_{L^{p_1}(w_1^{p_1})\times\cdots\times L^{p_m}(w_m^{p_m})\to L^{p,\infty}(w^p)}\prod_{j=1}^m\langle f_jw_j\rangle_{p_j,Q}.
\end{equation}
Now set $f_j=w_j^{-\frac{\frac{1}{r_j}}{\frac{1}{r_j}-\frac{1}{p_j}}}$ and assume for the moment that $f_j^{r_j}=f_j^{p_j}w_j^{p_j}=w_j^{-\frac{1}{\frac{1}{r_j}-\frac{1}{p_j}}}$ is locally integrable. Then the product on the right-hand side of \eqref{eq:fjmreq} is positive and finite so that we may take it to the left-hand side. This yields
\begin{equation}\label{eq:fjmreq2}
\left(\prod_{j=1}^m\langle w_j^{-1}\rangle_{\frac{1}{\frac{1}{r_j}-\frac{1}{p_j}},Q}\right)\langle w\rangle_{p,Q}\leq\|M_{\vec{r}}\|_{L^{p_1}(w_1^{p_1})\times\cdots\times L^{p_m}(w_m^{p_m})\to L^{p,\infty}(w^p)}
\end{equation}
and taking a supremum over all cubes $Q$ yields \eqref{eq:owmaxnorm}. To prove that $w_j^{-\frac{1}{\frac{1}{r_j}-\frac{1}{p_j}}}$ is indeed locally integrable, we choose $f_j$ such that $f_j^{p_j}w_j^{p_j}=(w_j^{\frac{1}{\frac{1}{r_j}-\frac{1}{p_j}}}+\eps)^{-1}$ for $\eps>0$, the latter expression being bounded and thus locally integrable. Again taking the product on the right-hand side of \eqref{eq:fjmreq} to the left, an appeal to the Monotone Convergence Theorem as $\eps\downarrow 0$ yields \eqref{eq:fjmreq2}. The assertion follows.

Since the implication \ref{it:equone3}$\Rightarrow$\ref{it:equone2} is clear, we may finish the proof of the equivalences by showing \ref{it:equone1}$\Rightarrow$\ref{it:equone3} through \eqref{eq:multibuckley}.

By Lemma \ref{lem:multimdomn}, it follows from H\"older's inequality that
\begin{align*}
\|M_{\vec{r}}(f_1,\ldots,f_m)\|_{L^p(w^p)}&\leq[\vec{w}]^{\max_{j=1,\ldots,m}\left\{\frac{\frac{1}{r_j}}{\frac{1}{r_j}-\frac{1}{p_j}}\right\}}_{\vec{p},(\vec{r},\infty)}\prod_{j=1}^m \|N_{p_j,r_j,\vec{w}}f_j\|_{L^{p_j}(w_j^{p_j})}\\
&\lesssim c_{\vec{p},\vec{r}}[\vec{w}]^{\max_{j=1,\ldots,m}\left\{\frac{\frac{1}{r_j}}{\frac{1}{r_j}-\frac{1}{p_j}}\right\}}_{\vec{p},(\vec{r},\infty)}\prod_{j=1}^m\|f_j\|_{L^{p_j}(w_j^{p_j})},
\end{align*}
as desired.

Finally, we prove optimality of \eqref{eq:multibuckley}. Let $\alpha\geq 0$ denote the smallest possible constant in the estimate
\[
\|M_{\vec{r}}(f_1,\ldots,f_m)\|_{L^p(w^p)}\lesssim[\vec{w}]^\alpha_{\vec{p},(\vec{r},\infty)}\prod_{j=1}^m\|f_j\|_{L^{p_j}(w_j^{p_j})}.
\]
We have shown that $\alpha\leq\max_{j=1,\ldots,m}\left\{\frac{\frac{1}{r_j}}{\frac{1}{r_j}-\frac{1}{p_j}}\right\}$ and it remains to prove the lower bound. We assume that we are in dimension $n=1$, the general case following mutatis mutandis. Moreover, we assume without loss of generality that the maximum $\max_{j=1,\ldots,m}\left\{\frac{\frac{1}{r_j}}{\frac{1}{r_j}-\frac{1}{p_j}}\right\}$ is attained for $j=1$, the other cases following similarly by permuting the indices. For $0<\eps<1$ we define
\begin{align*}
w_1(x):=|x|^{(1-\eps)\left(\frac{1}{r_1}-\frac{1}{p_1}\right)},&\qquad w_j(x):=1\quad\text{for $j\in\{2,\ldots,m\}$},\\
f_1(x):=|x|^{-\frac{1-\eps}{r_1}}\chi_{(0,1)}(x),&\qquad f_j(x):=|x|^{-\frac{1-\eps}{p_j}}\chi_{(0,1)}(x)\quad\text{for $j\in\{2,\ldots,m\}$}.
\end{align*}
Then, by H\"older's inequality and a computation, we have
\[
[\vec{w}]_{\vec{p},(\vec{r},\infty)}\leq[w_1]_{p_1,(r_1,\infty)}\eqsim\eps^{\frac{1}{p_1}-\frac{1}{r_1}}.
\]
Moreover, one computes
\[
\prod_{j=1}^m\|f_j\|_{L^{p_j}(w_j^{p_j})}=\eps^{-\frac{1}{p}}
\]
and
\[
\prod_{j=1}^m\langle f_j\rangle_{r_j,[-|x|,|x|]}\gtrsim\eps^{-\frac{1}{r_1}}f_1(x)\prod_{j=2}^m\left[\frac{\frac{1}{r_j}}{\frac{1}{r_j}-(1-\eps)\frac{1}{p_j}}\right]^{\frac{1}{r_j}}f_j(x).
\]
Setting $f(x):=\prod_{j=1}^m f_j(x)w_j(x)=|x|^{-\frac{1-\eps}{p}}\chi_{(0,1)}(x)$, we find that
\[
\|M_{\vec{r}}(f_1,\ldots,f_m)\|_{L^p(w^p)}\gtrsim\eps^{-\frac{1}{r_1}}\|f\|_{L^p}=\eps^{-\frac{1}{r_1}-\frac{1}{p}}
\]
and
\[
\|M_{\vec{r}}(f_1,\ldots,f_m)\|_{L^p(w^p)}\lesssim[\vec{w}]^\alpha_{\vec{p},(\vec{r},\infty)}\prod_{j=1}^m\|f_j\|_{L^{p_j}(w_j^{p_j})}\lesssim\eps^{\alpha\left(\frac{1}{p_1}-\frac{1}{r_1}\right)-\frac{1}{p}}
\]
Letting $\eps\downarrow 0$ shows that we must have $\alpha\left(\frac{1}{p_1}-\frac{1}{r_1}\right)-\frac{1}{p}-(-\frac{1}{r_1}-\frac{1}{p})\leq 0$, i.e.,
\[
\alpha\geq\frac{\frac{1}{r_1}}{\frac{1}{r_1}-\frac{1}{p_1}}=\max_{j=1,\ldots,m}\left\{\frac{\frac{1}{r_j}}{\frac{1}{r_j}-\frac{1}{p_j}}\right\}.
\]
Thus, we have $\alpha=\max_{j=1,\ldots,m}\left\{\frac{\frac{1}{r_j}}{\frac{1}{r_j}-\frac{1}{p_j}}\right\}$ and the assertion follows.
\end{proof}
\begin{remark}
In the unweighted case we actually have an equivalence $\|M_{\vec{r}}\|_{L^{p_1}\times\cdots\times L^{p_m}\to L^p}\eqsim c_{\vec{p},\vec{r}}$, which follows from a similar calculation as above, with $f_j(x):=|x|^{-\frac{1-\eps}{p_j}}\chi_{(0,1)}(x)$ for all $j\in\{1,\ldots,m\}$.
\end{remark}
\section{Proof of the main result}\label{sec:proof}
The proof of the main theorem essentially follows from the theorem below. In this theorem we deal with $m+1$-tuples as well as $m$-tuples of the same parameters, which can be notationally confusing. To circumvent this problem, we shall use the earlier established convention that for $m+1$ parameters $\alpha_1,\ldots,\alpha_{m+1}$ we shall use the boldface notation $\pmb{\alpha}=(\alpha_1,\ldots,\alpha_{m+1})$ for $m+1$-tuples while we will use the arrow notation $\vec{\alpha}=(\alpha_1,\ldots,\alpha_m)$ for $m$-tuples, see also Section~\ref{sec:twotwo}.

We again point out that even though this result is formulated for the Banach range $\frac{1}{p}\leq 1$, it can be used to obtain results in the range including the cases $\frac{1}{p}>1$, see also Remark \ref{rem:qbvsb} and the proof of Theorem \ref{thm:qmlre}.
\begin{theorem}\label{thm:main}
Let $\frac{1}{r_1},\ldots,\frac{1}{r_{m+1}}\in(0,1]$ and suppose we are given $\frac{1}{p_1},\ldots,\frac{1}{p_{m+1}}\in[0,1]$ satisfying $\frac{1}{p_j}<\frac{1}{r_j}$ for all $j\in\{1,\ldots,m+1\}$ and $\sum_{j=1}^{m+1}\frac{1}{p_j}=1$. Assume moreover that we are given weights $w_1,\ldots w_{m+1}$ satisfying $\prod_{j=1}^{m+1}w_j=1$ and $\pmb{w} \in A_{\pmb{p},(\pmb{r},\infty)}$.

Suppose we are given functions $f_j\in L^{p_j}(w_j^{p_j})$ and $\frac{1}{q_1},\ldots,\frac{1}{q_{m+1}}\in[0,1]$ satisfying $\frac{1}{q_j}\leq\frac{1}{r_j}$ and $\sum_{j=1}^{m+1}\frac{1}{q_j}=1$. Then there are weights $W_1,\ldots, W_{m+1}$ satisfying $\prod_{j=1}^{m+1}W_j=1$ and $\pmb{W}\in A_{\pmb{q},(\pmb{r},\infty)}$ such that
\begin{equation}\label{eq:mainprop1}
\prod_{j=1}^{m+1}\|f_j\|_{L^{q_j}(W_j^{q_j})}\leq 2^{m^2}\prod_{j=1}^{m+1}\|f_j\|_{L^{p_j}(w_j^{p_j})}
\end{equation}
and
\begin{equation}\label{eq:mainprop2}
[\pmb{W}]_{\pmb{q},(\pmb{r},\infty)}\leq C_{\pmb{p},\pmb{q},\pmb{r}}[\pmb{w}]^{\max_{j=1,\ldots,m+1} \left\{\frac{\frac{1}{r_j}-\frac{1}{q_j}}{\frac{1}{r_j}-\frac{1}{p_j}}\right\}}_{\pmb{p},(\pmb{r},\infty)}.
\end{equation}
\end{theorem}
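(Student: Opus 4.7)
The plan is to establish Theorem~\ref{thm:main} via a multilinear Rubio de Francia algorithm driven by the pointwise domination of Lemma~\ref{lem:multimdomn}. First I apply that lemma in its $(m+1)$-linear form with $s=\infty$, which is permissible here because $\sum_j\tfrac{1}{p_j}=1$ and $\tfrac{1}{p_j}<\tfrac{1}{r_j}$. This will yield sublinear operators $N_j:=N_{p_j,r_j,\pmb{w}}$ on $L^{p_j}(w_j^{p_j})$ with norm $K_j\lesssim[\tfrac{1/r_j}{1/r_j-1/p_j}]^{1/r_j}$, satisfying
\[
M_{\pmb{r}}(g_1,\ldots,g_{m+1})\leq[\pmb{w}]_{\pmb{p},(\pmb{r},\infty)}^{\gamma_0}\prod_{j=1}^{m+1}N_j(g_j),\qquad\gamma_0:=\max_j\tfrac{1/r_j}{1/r_j-1/p_j}.
\]

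Next, for each $j$ I form the Rubio de Francia iterate $\tilde f_j:=\sum_{k\geq0}(2K_j)^{-k}N_j^kf_j$, which satisfies $f_j\leq\tilde f_j$, $\|\tilde f_j\|_{L^{p_j}(w_j^{p_j})}\leq 2\|f_j\|_{L^{p_j}(w_j^{p_j})}$, and $N_j\tilde f_j\leq 2K_j\tilde f_j$. Feeding the last of these into the domination above produces the pseudo-$A_1$ estimate
\[
M_{\pmb{r}}(\tilde f_1,\ldots,\tilde f_{m+1})\lesssim[\pmb{w}]_{\pmb{p},(\pmb{r},\infty)}^{\gamma_0}\prod_{j=1}^{m+1}\tilde f_j,
\]
which is the structural property of the tuple $(\tilde f_j)$ that will drive the rest of the argument.

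I then propose the ansatz $W_j:=w_j\prod_{k=1}^{m+1}\tilde f_k^{a_{jk}}$, where the matrix $(a_{jk})$ is chosen so that: (i) $\sum_ja_{jk}=0$ for every $k$, which together with $\prod w_j=1$ forces $\prod_jW_j=1$; (ii) $a_{jk}=0$ in the trivial case $q_j=p_j$ for all $j$, so that the ansatz degenerates to $W_j=w_j$; and (iii) the powers of $\tilde f_k$ distribute cleanly across \eqref{eq:mainprop1} and \eqref{eq:mainprop2}, the former being handled via $f_j\leq\tilde f_j$ and Hölder under $\sum\tfrac{1}{q_j}=1$, the latter via the pseudo-$A_1$ bound. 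By the permutation invariance of the condition $A_{\pmb{p},(\pmb{r},\infty)}$ recorded in Section~\ref{sec:twotwo}, I may assume without loss of generality that the maximum in the exponent of \eqref{eq:mainprop2} is attained at a fixed index, which streamlines the case analysis.

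The main obstacle is verifying \eqref{eq:mainprop2}. Via Lemma~\ref{lem:wconstchar} this reduces to showing, uniformly in cubes $Q$, that
\[
\Bigl(\prod_j\langle V_j\rangle_{1,Q}^{1/r_j}\Bigr)|Q|\lesssim[\pmb{w}]_{\pmb{p},(\pmb{r},\infty)}^{\gamma_1}\prod_jV_j(Q)^{1/q_j},\qquad V_j:=W_j^{-1/(1/r_j-1/q_j)},
\]
with $\gamma_1=\max_j\tfrac{1/r_j-1/q_j}{1/r_j-1/p_j}$. The $\tilde f_k$-dependence in $V_j$ must be absorbed into the pseudo-$A_1$ bound, so that the averages $\langle V_j\rangle_{1,Q}$ are controlled pointwise by $\prod\tilde f_k$, while the $w_j$-dependence is controlled by the original hypothesis governing the averages $\langle w_j^{-1}\rangle$ at the exponent $1/(1/r_j-1/p_j)$. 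The exponent $\gamma_1$ should emerge as the worst-case cost of converting these averages to the exponent $1/(1/r_j-1/q_j)$, and the requirement that this conversion work simultaneously for indices with $q_j>p_j$ and $q_j<p_j$ is what dictates the precise choice of the coefficient matrix $(a_{jk})$.
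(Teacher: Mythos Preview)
Your proposal departs substantially from the paper's argument, and in its current form it has a genuine gap rather than merely a different route.

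The paper does \emph{not} run the Rubio de Francia algorithm on the full $(m+1)$-tuple. Instead, Step~1 first restricts to the special case where exactly one index $j_0$ satisfies $\tfrac{1}{p_{j_0}}<\tfrac{1}{q_{j_0}}$, places that index in position $m+1$, and then invokes the translation Lemma~\ref{lem:translation} with a very specific choice of $\tfrac{1}{s_j}$ so as to apply the \emph{$m$-linear} Rubio de Francia algorithm (Lemma~\ref{lem:multirubalg}) at the translated exponents $\vec p(s),\vec r(s)$. The resulting weights have the diagonal form $W_j=(R_{p_j(s),r_j(s),\vec w}(g_j))^{-\theta}w_j^{1-\theta'}$, so each $W_j$ depends only on the $j$-th iterate, not on a full matrix of cross-terms. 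The translation is precisely what converts the Rubio de Francia exponent $\max_j\tfrac{1/r_j(s)}{1/r_j(s)-1/p_j(s)}$ into the target $\max_j\tfrac{1/r_j-1/q_j}{1/r_j-1/p_j}$ via the algebraic identity displayed just after \eqref{eq:finalbigwest}. Step~2 then iterates Step~1 through a chain of intermediate tuples $\pmb q^k$, handling one ``bad'' index at a time, and a monotonicity argument shows that the telescoping product of exponents collapses to the claimed maximum.

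Your scheme, by contrast, applies Lemma~\ref{lem:multimdomn} to all $m+1$ slots at once, which produces a pseudo-$A_1$ bound carrying the exponent $\gamma_0=\max_j\tfrac{1/r_j}{1/r_j-1/p_j}$. This is strictly larger than the target $\gamma_1=\max_j\tfrac{1/r_j-1/q_j}{1/r_j-1/p_j}$, and you give no mechanism by which $\gamma_1$ rather than $\gamma_0$ would emerge in \eqref{eq:mainprop2}; the sentence ``$\gamma_1$ should emerge as the worst-case cost'' is an assertion, not an argument. Two further difficulties compound this. First, the coefficient matrix $(a_{jk})$ is never specified, and the constraints you list (column sums zero, degeneration when $\pmb q=\pmb p$) are far from determining it. Second, for indices with $\tfrac{1}{q_j}>\tfrac{1}{p_j}$ the inequality $|f_j|\le\tilde f_j$ points the wrong way for bounding $\|f_jW_j\|_{L^{q_j}}$ from above, and your pseudo-$A_1$ estimate only controls the \emph{product} $\prod_j\tilde f_j$, so separating the cross-terms $\tilde f_k^{a_{jk}}$ inside each average $\langle W_j^{-1}\rangle_{1/(1/r_j-1/q_j),Q}$ is not addressed. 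The paper avoids both issues by never having more than one index move in the ``hard'' direction per step and by keeping $W_j$ diagonal in the iterates; that is the content of the translation and the iteration, and it is missing from your outline.
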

The proof of this theorem relies on a multilinear generalization of the Rubio de Francia algorithm.
\begin{lemma}[Multilinear Rubio de Francia algorithm]\label{lem:multirubalg}
Let $r_1,\ldots,r_m,p_1,\ldots,p_m\in (0,\infty)$ with $\vec{r}<\vec{p}$. Then for each $\vec{w}\in A_{\vec{p},(\vec{r},\infty)}$ there exist operators $R_{p_j,r_j,\vec{w}}:L^{p_j}(w_j^{p_j})\to L^{p_j}(w_j^{p_j})$ satisfying
\begin{enumerate}[(i)]
\item \label{it:ru1}
$|f_j|\leq R_{p_j,r_j,\vec{w}}f_j$;
\item \label{it:ru2} $\|R_{p_j,r_j,\vec{w}}f_j\|_{L^{p_j}(w_j^{p_j})}\leq2\|f_j\|_{L^{p_j}(w_j^{p_j})}$;
\item \label{it:ru3} $\displaystyle\prod_{j=1}^m\langle R_{p_j,r_j,\vec{w}}f_j\rangle_{r_j,Q}\lesssim c_{\vec{p},\vec{r}}[\vec{w}]^{\max_{j=1,\ldots,m}\left\{\frac{\frac{1}{r_j}}{\frac{1}{r_j}-\frac{1}{p_j}}\right\}}_{\vec{p},(\vec{r},\infty)} \inf_{y\in Q}\prod_{j=1}^mR_{p_j,r_j,\vec{w}}f_j(y)$ for all cubes $Q$, where the implicit constant depends on the dimension and
\[
c_{\vec{p},\vec{r}}=\prod_{j=1}^m\left[\frac{\frac{1}{r_j}}{\frac{1}{r_j}-\frac{1}{p_j}}\right]^{\frac{1}{r_j}}.
\]
\end{enumerate}
\end{lemma}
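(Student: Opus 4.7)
The plan is to construct the $R_{p_j,r_j,\vec{w}}$ by a multilinear analogue of Rubio de Francia's algorithm, with the sublinear operators $N_{p_j,r_j,\vec{w}}$ from Lemma~\ref{lem:multimdomn} playing the role of the Hardy--Littlewood maximal operator. Let $C_{p_j}$ denote the operator norm of $N_{p_j,r_j,\vec{w}}$ on $L^{p_j}(w_j^{p_j})$, which by Lemma~\ref{lem:multimdomn} is controlled, up to a dimensional constant, by $\bigl[\tfrac{1/r_j}{1/r_j-1/p_j}\bigr]^{1/r_j}$. Tracking through the explicit definition of $N^{\D}_{p_j,r_j,\vec{w}}$ given in the proof of Lemma~\ref{lem:multimdomn}, one checks that $|f_j|\leq N_{p_j,r_j,\vec{w}}f_j$ pointwise: the inner weighted maximal operator $M^{v_j,\D}_{r_j}$ dominates $|f_j|v_j^{-1/r_j}$, the outer $M^{w^p,\D}$ dominates its argument, and the leftover factor collapses via the identity $v_j^{1/p_j-1/r_j}w_j^{-1}=1$, which is immediate from $v_j=w_j^{-1/(1/r_j-1/p_j)}$.

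Following the classical construction, define
\[
R_{p_j,r_j,\vec{w}}f_j:=\sum_{k=0}^\infty\frac{N_{p_j,r_j,\vec{w}}^kf_j}{(2C_{p_j})^k},
\]
with $N_{p_j,r_j,\vec{w}}^0 f_j:=|f_j|$ and $N_{p_j,r_j,\vec{w}}^{k+1}f_j:=N_{p_j,r_j,\vec{w}}(N_{p_j,r_j,\vec{w}}^kf_j)$. Property~\ref{it:ru1} is the $k=0$ term, and the geometric series bound
\[
\|R_{p_j,r_j,\vec{w}}f_j\|_{L^{p_j}(w_j^{p_j})}\leq\sum_{k=0}^\infty(2C_{p_j})^{-k}C_{p_j}^k\|f_j\|_{L^{p_j}(w_j^{p_j})}=2\|f_j\|_{L^{p_j}(w_j^{p_j})}
\]
yields property~\ref{it:ru2}. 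The decisive feature of this iterate, obtained by invoking countable sublinearity of $N_{p_j,r_j,\vec{w}}$ (inherited from the weighted dyadic maximal operators appearing in its definition), is the quasi-invariance
\[
N_{p_j,r_j,\vec{w}}(R_{p_j,r_j,\vec{w}}f_j)\leq\sum_{k=0}^\infty\frac{N_{p_j,r_j,\vec{w}}^{k+1}f_j}{(2C_{p_j})^k}=2C_{p_j}\sum_{k=1}^\infty\frac{N_{p_j,r_j,\vec{w}}^kf_j}{(2C_{p_j})^k}\leq2C_{p_j}R_{p_j,r_j,\vec{w}}f_j.
\]

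For property~\ref{it:ru3}, apply the pointwise bound from Lemma~\ref{lem:multimdomn} to the $m$-tuple $(R_{p_1,r_1,\vec{w}}f_1,\ldots,R_{p_m,r_m,\vec{w}}f_m)$: setting $\gamma:=\max_{j}\bigl\{\tfrac{1/r_j}{1/r_j-1/p_j}\bigr\}$, for every $y\in Q$ one has
\[
\prod_{j=1}^m\langle R_{p_j,r_j,\vec{w}}f_j\rangle_{r_j,Q}\leq M_{\vec{r}}(R_{p_1,r_1,\vec{w}}f_1,\ldots,R_{p_m,r_m,\vec{w}}f_m)(y)\leq[\vec{w}]^{\gamma}_{\vec{p},(\vec{r},\infty)}\prod_{j=1}^mN_{p_j,r_j,\vec{w}}(R_{p_j,r_j,\vec{w}}f_j)(y).
\]
Substituting the quasi-invariance into each factor on the right and taking an infimum over $y\in Q$ bounds $\prod_j\langle R_{p_j,r_j,\vec{w}}f_j\rangle_{r_j,Q}$ by $2^m\bigl(\prod_jC_{p_j}\bigr)[\vec{w}]^{\gamma}_{\vec{p},(\vec{r},\infty)}\inf_{y\in Q}\prod_jR_{p_j,r_j,\vec{w}}f_j(y)$, and $\prod_jC_{p_j}\lesssim c_{\vec{p},\vec{r}}$ up to a dimensional constant, yielding~\ref{it:ru3}. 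The main obstacle here is really pushed back into Lemma~\ref{lem:multimdomn}: once the pointwise domination $M_{\vec{r}}\lesssim[\vec{w}]^\gamma\prod_jN_{p_j,r_j,\vec{w}}$ is available with sharp weight dependence and with operator norms of each $N_{p_j,r_j,\vec{w}}$ independent of the weight, the multilinear Rubio de Francia iteration is a mechanical adaptation of the classical one, the only structural requirements being the domination of the identity and countable sublinearity, both transparent from the construction of $N_{p_j,r_j,\vec{w}}$ as a finite sum of dyadic compositions.
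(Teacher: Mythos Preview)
Your proof is correct and follows essentially the same route as the paper: define $R_{p_j,r_j,\vec{w}}$ as the geometric-series iterate of $N_{p_j,r_j,\vec{w}}$ from Lemma~\ref{lem:multimdomn}, read off \ref{it:ru1} and \ref{it:ru2} from the $k=0$ term and the norm bound, derive the quasi-invariance $N(Rf_j)\leq 2C_{p_j}Rf_j$, and feed this into the pointwise domination of Lemma~\ref{lem:multimdomn} to obtain \ref{it:ru3}. Your additional remarks making countable sublinearity and the identity domination $|f_j|\leq N_{p_j,r_j,\vec{w}}f_j$ explicit are correct but not needed, since you already set $N^0f_j:=|f_j|$ by convention.
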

\begin{proof}
Letting $N_{p_j,r_j,\vec{w}}$ be as in Lemma \ref{lem:multimdomn}, we define
\[
R_{p_j,r_j,\vec{w}}f_j:=\sum_{k=0}^\infty\frac{N^k_{p_j,r_j,\vec{w}}(f_j)}{2^k\|N_{p_j,r_j,\vec{w}}\|^k_{L^{p_j}(w_j^{p_j})\to L^{p_j}(w_j^{p_j})}},
\]
where $N^0_{p_j,r_j,\vec{w}}(f_j):=|f_j|$ and $N^{k}_{p_j,r_j,\vec{w}}(f_j):=N_{p_j,r_j,\vec{w}}(N^{k-1}_{p_j,r_j,\vec{w}}(f_j))$.

To prove property \ref{it:ru1}, it suffices to note that the $k=0$ term in the sum is equal to $|f_j|$.

For \ref{it:ru2} we have
\begin{align*}
\|R_{p_j,r_j,\vec{w}}f_j\|_{L^{p_j}(w_j^{p_j})}&\leq\sum_{k=0}^\infty\frac{\|N^k_{p_j,r_j,\vec{w}}(f_j)\|_{L^{p_j}(w_j^{p_j})}}{2^k\|N_{p_j,r_j,\vec{w}}\|^k_{L^{p_j}(w_j^{p_j})\to L^{p_j}(w_j^{p_j})}}\\
&\leq\sum_{k=0}^\infty\frac{\|f_j\|_{L^{p_j}(w_j^{p_j})}}{2^k}=2\|f_j\|_{L^{p_j}(w_j^{p_j})}.
\end{align*}

To prove \ref{it:ru3}, we first note that
\begin{align*}
N_{p_j,r_j,\vec{w}}(R_{p_j,r_j,\vec{w}}f_j)&\leq\sum_{k=0}^\infty\frac{N^{k+1}_{p_j,r_j,\vec{w}}(f_j)}{2^k\|N_{p_j,r_j,\vec{w}}\|^k_{L^{p_j}(w_j^{p_j})\to L^{p_j}(w_j^{p_j})}}\\
&\leq2\|N_{p_j,r_j,\vec{w}}\|_{L^{p_j}(w_j^{p_j})\to L^{p_j}(w_j^{p_j})}R_{p_j,r_j,\vec{w}}f_j.
\end{align*}
Thus, it follows from Lemma \ref{lem:multimdomn} that
\begin{align*}
M_{\vec{r}}(R_{p_1,r_1,\vec{w}}f_1,\ldots,R_{p_m,r_m,\vec{w}}f_m)&\leq [\vec{w}]^{\max_{j=1,\ldots,m}\left\{\frac{\frac{1}{r_j}}{\frac{1}{r_j}-\frac{1}{p_j}}\right\}}_{\vec{p},(\vec{r},\infty)} \prod_{j=1}^m N_{p_j,r_j,\vec{w}}(R_{p_j,r_j,\vec{w}}f_j)\\
&\lesssim 2^mc_{\vec{p},\vec{r}}[\vec{w}]^{\max_{j=1,\ldots,m}\left\{\frac{\frac{1}{r_j}}{\frac{1}{r_j}-\frac{1}{p_j}}\right\}}_{\vec{p},(\vec{r},\infty)} \prod_{j=1}^mR_{p_j,r_j,\vec{w}}f_j,
\end{align*}
as desired. The assertion follows.
\end{proof}
\begin{proof}[Proof of Theorem \ref{thm:main}]
The proof will consist of two steps. In the first step we prove the result for very specific $\pmb{q}$. In the second step we iterate the first step to obtain the desired result.

\emph{Step 1.} In this step we assume that there is some $j_0\in\{1,\ldots,m+1\}$ such that
\[
\frac{1}{p_{j_0}}<\frac{1}{q_{j_0}},\qquad \frac{1}{p_j}\geq\frac{1}{q_j}\quad\text{for $j\neq j_0$.}
\]
Since none of the statements in the formulation of the proposition depend on the order of the indices, we may assume without loss of generality that $j_0=m+1$. More precisely, we can let $\pi\in S_{m+1}$ be the transposition given by $\pi(j)=j$ for $j\neq j_0,m+1$ and $\pi(j_0)=m+1$, $\pi(m+1)=j_0$. Replacing the index $j$ by $\pi(j)$ everywhere then indeed allows us to reduce to the case $j_0=m+1$.

We define $\frac{1}{s}:=1-\frac{1}{r_{m+1}}\geq0$, $\frac{1}{p}:=1-\frac{1}{p_{m+1}}>0$, $\frac{1}{q}:=1-\frac{1}{q_{m+1}}\geq 0$, and $w:=w_{m+1}^{-1}$ so that $w=\prod_{j=1}^m w_j$. For an $m+1$-tuple $(\alpha_1,\ldots,\alpha_{m+1})$ we will use the notation $\vec{\alpha}=(\alpha_1,\ldots,\alpha_m)$ so that the arrow notation will always refer to an $m$-tuple. Thus, we have now reduced the problem to proving that there exist $m$ weights $\vec{W}\in A_{\vec{q},(\vec{r},s)}$ such that  $f_j\in L^{q_j}(W_j^{q_j})$, $f_{m+1}\in L^{q'}(W^{-q'})$, where $W:=\prod_{j=1}^m W_j$, with
\begin{equation}\label{eq:bigwest}
\left(\prod_{j=1}^m\|f_j\|_{L^{q_j}(W_j^{q_j})}\right)\|f_{m+1}\|_{L^{q'}(W^{-q'})}\leq 2^m\left(\prod_{j=1}^m\|f_j\|_{L^{p_j}(w_j^{p_j})}\right)\|f_{m+1}\|_{L^{p'}(w^{-p'})},
\end{equation}
and
\begin{equation}\label{eq:ru4}
[\vec{W}]_{\vec{q},(\vec{r},s)}\leq C_{\vec{p},\vec{q},\vec{r},s}[\vec{w}]^{\max_{j=1,\ldots,m} \left\{\frac{\frac{1}{r_j}-\frac{1}{q_j}}{\frac{1}{r_j}-\frac{1}{p_j}}\right\}}_{\vec{p},(\vec{r},s)}.
\end{equation}
Indeed, the result then follows by setting $W_{m+1}:=W^{-1}$ and by noting that
\[
[\pmb{W}]_{\pmb{q},(\pmb{r},\infty)}=[\vec{W}]_{\vec{q},(\vec{r},s)},\qquad[\pmb{w}]_{\pmb{p},(\pmb{r},\infty)}=[\vec{w}]_{\vec{p},(\vec{r},s)}.
\]

The construction of the $m$ weights $W_1,\ldots, W_m$ relies on the multilinear Rubio de Francia algorithm as well as a clever usage of the translation lemma to deal with the parameter $s$. Setting
\[
\frac{1}{s_j}:=\frac{\left(\frac{1}{p}-\frac{1}{s}\right)\frac{1}{q_j}-\left(\frac{1}{q}-\frac{1}{s}\right)\frac{1}{p_j}}{\frac{1}{p}-\frac{1}{q}},
\]
we have
\[
\frac{1}{s_j}\leq\frac{\left(\frac{1}{p}-\frac{1}{s}\right)\frac{1}{q_j}-\left(\frac{1}{q}-\frac{1}{s}\right)\frac{1}{q_j}}{\frac{1}{p}-\frac{1}{q}}=\frac{1}{q_j}
\]
with equality if and only if $\frac{1}{q_j}=\frac{1}{p_j}$ and so that $\frac{1}{s_j}\leq\frac{1}{q_j}\leq\frac{1}{p_j}$, and
\[
\sum_{j=1}^m\frac{1}{s_j}=\frac{\left(\frac{1}{p}-\frac{1}{s}\right)\frac{1}{q}-\left(\frac{1}{q}-\frac{1}{s}\right)\frac{1}{p}}{\frac{1}{p}-\frac{1}{q}}=\frac{1}{s}.
\]
We set
\[
\frac{1}{p_j(s)}:=\frac{1}{p_j}-\frac{1}{s_j},\qquad\frac{1}{q_j(s)}:=\frac{1}{q_j}-\frac{1}{s_j},\qquad
\frac{1}{r_j(s)}:=\frac{1}{r_j}-\frac{1}{s_j}
\]
and $\frac{1}{p(s)}:=\sum_{j=1}^m\frac{1}{p_j(s)}=\frac{1}{p}-\frac{1}{s}$, $\vec{p}(s):=(p_1(s),\ldots,p_m(s))$, and similarly for $\frac{1}{q(s)}$, $\vec{q}(s)$, and $\vec{r}(s)$.

We emphasize here that $\frac{1}{p_j(s)}=0$ if and only if $\frac{1}{p_j}=\frac{1}{q_j}$ and we encourage the reader to verify that the remaining steps in this proof remain valid in this particular case.

We may compute
\begin{equation}\label{eq:joeq}
\frac{1}{p_j}-\frac{1}{q_j}=\frac{\frac{1}{p(s)}-\frac{1}{q(s)}}{\frac{1}{p(s)}}\frac{1}{p_j(s)},\qquad \frac{1}{q_j(s)}=\frac{\frac{1}{q(s)}}{\frac{1}{p(s)}}\frac{1}{p_j(s)}.
\end{equation}

We set $g_j:=|f_j|^{\frac{\frac{1}{p_j(s)}}{\frac{1}{p_j}}}w_j^{-\frac{\frac{1}{s_j}}{\frac{1}{p_j}}}$ so that
\[
\|g_j\|_{L^{p_j(s)}(w_j^{p_j(s)})}=\|f_j\|^{\frac{\frac{1}{p_j(s)}}{\frac{1}{p_j}}}_{L^{p_j}(w_j^{p_j})}
\]
and, using the notation from Lemma \ref{lem:multirubalg}, we set
\[
W_j:=(R_{p_j(s),r_j(s),\vec{w}}(g_j))^{-\frac{\frac{1}{p(s)}-\frac{1}{q(s)}}{\frac{1}{p(s)}}}w_j^{\frac{\frac{1}{q(s)}}{\frac{1}{p(s)}}}.
\]

Unwinding the definitions, it follows from \eqref{eq:joeq} and property \ref{it:ru1} of our multilinear Rubio de Francia algorithm that
\begin{equation}\label{eq:exduit}
\begin{split}
\|f_j\|_{L^{q_j}(W_j)}&=\|g_j^{\frac{\frac{1}{p_j}}{\frac{1}{q_j}}}(R_{p_j(s),r_j(s),\vec{w}}(g_j))^{-\frac{\frac{1}{p_j}-\frac{1}{q_j}}{\frac{1}{q_j}}}\|^{\frac{\frac{1}{q_j}}{\frac{1}{p_j(s)}}}_{L^{p_j(s)}(w_j^{p_j(s)})}\\
&\leq\|g_j\|^{\frac{\frac{1}{q_j}}{\frac{1}{p_j(s)}}}_{L^{p_j(s)}(w_j^{p_j(s)})}=\|f_j\|^{\frac{\frac{1}{q_j}}{\frac{1}{p_j}}}_{L^{p_j}(w_j^{p_j})}.
\end{split}
\end{equation}

Next, it follows from \eqref{eq:joeq}, H\"older's inequality, and property \ref{it:ru2} that
\begin{align*}
\|f_{m+1}\|_{L^{q'}(W^{-q'})}&\leq\|f_{m+1}w^{-1}\|_{L^{p'}}\|W^{-1}w\|_{L^{\frac{1}{\frac{1}{p}-\frac{1}{q}}}}\\
&=\|f_{m+1}\|_{L^{p'}(w^{-p'})}\Big\|\Big(\prod_{j=1}^mR_{p_j(s),r_j(s),\vec{w}}(g_j)\Big)^{\frac{\frac{1}{p(s)}-\frac{1}{q(s)}}{\frac{1}{p(s)}}}w^{\frac{\frac{1}{p(s)}-\frac{1}{q(s)}}{\frac{1}{p(s)}}}\Big\|_{L^{\frac{1}{\frac{1}{p(s)}-\frac{1}{q(s)}}}}\\
&=\|f_{m+1}\|_{L^{p'}(w^{-p'})}\Big\|\prod_{j=1}^mR_{p_j(s),r_j(s),\vec{w}}(g_j)\Big\|^{\frac{\frac{1}{p(s)}-\frac{1}{q(s)}}{\frac{1}{p(s)}}}_{L^{p(s)}(w^{p(s)})}\\
&\leq\|f_{m+1}\|_{L^{p'}(w^{-p'})}\prod_{j=1}^m\|R_{p_j(s),r_j(s),\vec{w}}(g_j)\|^{\frac{\frac{1}{p(s)}-\frac{1}{q(s)}}{\frac{1}{p(s)}}}_{L^{p_j(s)}(w_j^{p_j(s)})}\\
&\leq 2^m\|f_{m+1}\|_{L^{p'}(w^{-p'})}\prod_{j=1}^m\|f_j\|^{\frac{\frac{1}{p_j}-\frac{1}{q_j}}{\frac{1}{p_j}}}_{L^{p_j}(w_j^{p_j})}.
\end{align*}
By combining this estimate with \eqref{eq:exduit}, we have proven \eqref{eq:bigwest}.

Finally, we prove \eqref{eq:ru4}. Noting that
\[
\frac{1}{r_j}-\frac{1}{q_j}=\frac{\frac{1}{p(s)}-\frac{1}{q(s)}}{\frac{1}{p(s)}}\frac{1}{r_j(s)}+\frac{\frac{1}{q(s)}}{\frac{1}{p(s)}}\left(\frac{1}{r_j}-\frac{1}{p_j}\right),
\]
it follows from H\"older's inequality and \ref{it:ru3} that for a cube $Q$ we have
\begin{equation}\label{eq:rub1}
\begin{split}
\prod_{j=1}^m\langle &W_j^{-1}\rangle_{\frac{1}{\frac{1}{r_j}-\frac{1}{q_j}},Q} \leq\prod_{j=1}^m\langle R_{p_j,r_j,\vec{w}}(g_j)\rangle^{\frac{\frac{1}{p(s)}-\frac{1}{q(s)}}{\frac{1}{p(s)}}}_{r_j(s),Q}\langle w_j^{-1}\rangle^{\frac{\frac{1}{q(s)}}{\frac{1}{p(s)}}}_{\frac{1}{\frac{1}{r_j}-\frac{1}{p_j}},Q}\\
&\lesssim \Big(c_{\vec{p}(s),\vec{r}(s)}[\vec{w}]^{\max_{j=1,\ldots,m}\left\{\frac{\frac{1}{r_j(s)}}{\frac{1}{r_j(s)}-\frac{1}{p_j(s)}}\right\}}_{\vec{p}(s),(\vec{r}(s),\infty)} \inf_{y\in Q}\prod_{j=1}^mR_{p_j(s),r_j(s),\vec{w}}(g_j)(y)\Big)^{\frac{\frac{1}{p(s)}-\frac{1}{q(s)}}{\frac{1}{p(s)}}}\prod_{j=1}^m\langle w_j^{-1}\rangle^{\frac{\frac{1}{q(s)}}{\frac{1}{p(s)}}}_{\frac{1}{\frac{1}{r_j}-\frac{1}{p_j}},Q}.
\end{split}
\end{equation}
Moreover, we have
\[
\Big(\inf_{y\in Q}\prod_{j=1}^mR_{p_j(s),r_j(s),\vec{w}}(g_j)(y)\Big)^{\frac{\frac{1}{p(s)}-\frac{1}{q(s)}}{\frac{1}{p(s)}}}\langle W\rangle_{\frac{1}{\frac{1}{q}-\frac{1}{s}},Q}\leq\langle w^{\frac{\frac{1}{q(s)}}{\frac{1}{p(s)}}}\rangle_{q(s),Q}=\langle w\rangle^{\frac{\frac{1}{q(s)}}{\frac{1}{p(s)}}}_{\frac{1}{\frac{1}{p}-\frac{1}{s}},Q}.
\]
By combining this with \eqref{eq:rub1} we find that
\begin{equation}\label{eq:finalbigwest}
\Big(\prod_{j=1}^m\langle W_j^{-1}\rangle_{\frac{1}{\frac{1}{r_j}-\frac{1}{q_j}},Q}\Big)\langle W\rangle_{\frac{1}{\frac{1}{q}-\frac{1}{s}},Q}\lesssim\Big(c_{\vec{p}(s),\vec{r}(s)}[\vec{w}]^{\max_{j=1,\ldots,m}\left\{\frac{\frac{1}{r_j(s)}}{\frac{1}{r_j(s)}-\frac{1}{p_j(s)}}\right\}}_{\vec{p}(s),(\vec{r}(s),\infty)} \Big)^{\frac{\frac{1}{p(s)}-\frac{1}{q(s)}}{\frac{1}{p(s)}}}[\vec{w}]^{\frac{\frac{1}{q(s)}}{\frac{1}{p(s)}}}_{\vec{p},(\vec{r},s)}.
\end{equation}
By the translation lemma, Lemma \ref{lem:translation}, we have $[\vec{w}]_{\vec{p}(s),(\vec{r}(s),\infty)}=[\vec{w}]_{\vec{p},(\vec{r},s)}$ and, moreover, by using \eqref{eq:joeq} we compute
\begin{align*}
\frac{\frac{1}{r_j(s)}}{\frac{1}{r_j(s)}-\frac{1}{p_j(s)}}\frac{\frac{1}{p(s)}-\frac{1}{q(s)}}{\frac{1}{p(s)}}+\frac{\frac{1}{q(s)}}{\frac{1}{p(s)}}&=\frac{\left(\frac{1}{p_j(s)}-\frac{1}{q_j(s)}\right)\frac{1}{r_j(s)}+\left(\frac{1}{r_j(s)}-\frac{1}{p_j(s)}\right)\frac{1}{q_j(s)}}{\left(\frac{1}{r_j}-\frac{1}{p_j}\right)\frac{1}{p_j(s)}}\\
&=\frac{\frac{1}{r_j}-\frac{1}{q_j}}{\frac{1}{r_j}-\frac{1}{p_j}},
\end{align*}
which we interpret as being equal to $1$ when $\frac{1}{q_j}=\frac{1}{p_j}=\frac{1}{r_j}$, so that
\[
\max_{j=1,\ldots,m}\left\{\frac{\frac{1}{r_j(s)}}{\frac{1}{r_j(s)}-\frac{1}{p_j(s)}}\right\}\frac{\frac{1}{p(s)}-\frac{1}{q(s)}}{\frac{1}{p(s)}}+\frac{\frac{1}{q(s)}}{\frac{1}{p(s)}}=\max_{j=1,\ldots,m} \left\{\frac{\frac{1}{r_j}-\frac{1}{q_j}}{\frac{1}{r_j}-\frac{1}{p_j}}\right\}.
\]
Hence, \eqref{eq:ru4} follows by taking a supremum over all cubes $Q$ in \eqref{eq:finalbigwest}. This concludes Step 1.

\emph{Step 2.} Now suppose $\pmb{q}$ is arbitrary. For each $j$ we either have $\frac{1}{p_j}<\frac{1}{q_j}$ or $\frac{1}{p_j}\geq\frac{1}{q_j}$. Assume without loss of generality that there is a $j_1\in\{1,\ldots,m\}$ such that
\begin{equation}\label{eq:initpqst3}
\frac{1}{p_j}\geq\frac{1}{q_j}\quad\text{if $j\in\{1,\ldots,j_1\}$},\qquad \frac{1}{p_j}<\frac{1}{q_j}\quad\text{if $j\in\{j_1+1,\ldots,m+1\}$}.
\end{equation}
Indeed, if this is not the case then, just as in Step 1, we may permute the indices to reduce back to this case.

The strategy will be to construct the $m+1$ weights $\pmb{W}$ in $m-j_1+1$ steps through repeated application of Step 1.

We define
\[
\theta_k:=\left\{\begin{array}{ll}\displaystyle\frac{\sum_{j=m-k+2}^{m+1}\frac{1}{q_j}-\frac{1}{p_j}}{\sum_{j=j_1+1}^{m+1}\frac{1}{q_j}-\frac{1}{p_j}}&\text{if $k\in\{1,\ldots,m-j_1+1\}$};\\
0&\text{if $k=0$},
\end{array}\right.
\]
so that $0=\theta_0\leq\theta_1\leq\cdots\leq\theta_{m-j_1+1}=1$. Thus, defining,
\[
\frac{1}{q_j^k}:=\frac{1}{q_j}+\theta_k\left(\frac{1}{p_j}-\frac{1}{q_j}\right),
\]
we have
\[
\frac{1}{q_j}=\frac{1}{q_j^0}\leq\frac{1}{q_j^1}\leq\cdots\leq\frac{1}{q^{m-j_1}_j}\leq\frac{1}{q^{m-j_1+1}_j}=\frac{1}{p_j}.
\]

Now, we define
\begin{align*}
\pmb{q}^1&:=(q^1_1,\ldots,q^1_{j_1},q_{j_1+1},\ldots,q_m,p_{m+1})\\
\pmb{q}^2&:=(q^2_1,\ldots,q^2_{j_1},q_{j_1+1},\ldots,q_{m-1},p_m,p_{m+1})\\
&\,\,\,\,\,\vdots\\
\pmb{q}^{m-j_1}&:=(q^{m-j_1}_1,\ldots,q^{m-j_1}_{j_1},q_{j_1+1},p_{j_1+2},\ldots,p_{m+1}).
\end{align*}
First we will check that the reciprocals of the coordinates of these $m+1$-tuples sum to $1$. Indeed, using $\sum_{j=1}^{m+1}\frac{1}{p_j}=\sum_{j=1}^{m+1}\frac{1}{q_j}=1$, we have
\begin{align*}
\sum_{j=1}^{j_1}\frac{1}{q^k_j}&=\sum_{j=1}^{j_1}\frac{1}{q_j}+\theta_k\sum_{j=1}^{j_1}\frac{1}{p_j}-\frac{1}{q_j}=\sum_{j=1}^{j_1}\frac{1}{q_j}+\theta_k\left(1-\sum_{j=j_1+1}^{m+1}\frac{1}{p_j}\right)-\theta_k\left(1-\sum_{j=j_1+1}^{m+1}\frac{1}{q_j}\right)\\
&=\sum_{j=1}^{j_1}\frac{1}{q_j}+\sum_{j=m-k+2}^{m+1}\frac{1}{q_j}-\frac{1}{p_j}=1-\sum_{j=j_1+1}^{m-k+1}\frac{1}{q_j}-\sum_{j=m-k+2}^{m+1}\frac{1}{p_j}
\end{align*}
so that
\[
\sum_{j=1}^{j_1}\frac{1}{q^k_j}+\sum_{j=j_1+1}^{m-k+1}\frac{1}{q_j}+\sum_{j=m-k+2}^{m+1}\frac{1}{p_j}=1,
\]
as desired.

Now, for $k\in\{1,\ldots,m-j_1+1\}$ we define
\[
\gamma_k:=\max_{j=1,\ldots,j_1}\frac{\frac{1}{r_j}-\frac{1}{q^{k-1}_j}}{\frac{1}{r_j}-\frac{1}{q^k_j}},
\]
which should be interpreted as being equal to $1$ when $\frac{1}{q^k_j}=\frac{1}{r_j}$, and we write $\vec{q}^k=(q_1^k,\ldots,q_m^k)$ for the $m$-tuple given by the first $m$ coordinates of $\pmb{q}^k$, with $\frac{1}{q^k}:=\sum_{j=1}^m\frac{1}{q_j^k}$.

We may apply Step 1 with $j_0=j_1+1$ to obtain weights $\pmb{W}^{m-j_1}=(W^{m-j_1}_1,\ldots,W^{m-j_1}_{m+1})$ such that
\begin{equation}\label{eq:iteration11}
\prod_{j=1}^{m+1}\|f_j\|_{L^{q^{m-j_1}_j}((W^{m-j_1}_j)^{q^{m-j_1}_j})}\leq 2^m\prod_{j=1}^{m+1}\|f_j\|_{L^{p_j}(w_j^{p_j})}
\end{equation}
and
\begin{equation}\label{eq:iteration12}
[\pmb{W}^{m-j_1}]_{\pmb{q}^{m-j_1},(\pmb{r},\infty)}\leq C_{\pmb{p},\pmb{q},\pmb{r}}[\pmb{w}]^{\gamma_{m-j_1+1}}_{\pmb{p},(\pmb{r},\infty)}.
\end{equation}

Next we apply Step 1 with $j_0=j_1+2$ to obtain weights $\pmb{W}^{m-j_1-1}$ with
\[
\prod_{j=1}^{m+1}\|f_j\|_{L^{q^{m-j_1-1}_j}((W^{m-j_1-1}_j)^{q^{m-j_1-1}_j})}\leq 2^m\prod_{j=1}^{m+1}\|f_j\|_{L^{q^{m-j_1}_j}((W^{m-j_1}_j)^{q^{m-j_1}_j})}
\]
and
\[
[\pmb{W}^{m-j_1-1}]_{\pmb{q}^{m-j_1-1},(\pmb{r},\infty)}\leq C_{\pmb{p},\pmb{q},\pmb{r}}[\pmb{W}^{m-j_1}]^{\gamma_{m-j_1}}_{\pmb{q}^{m-j_1},(\pmb{r},\infty)}.
\]
Combining these estimates with \eqref{eq:iteration11} and \eqref{eq:iteration12} we obtain
\[
\prod_{j=1}^{m+1}\|f_j\|_{L^{q^{m-j_1-1}_j}((W^{m-j_1-1}_j)^{q^{m-j_1-1}_j})}\leq (2^m)^2\prod_{j=1}^{m+1}\|f_j\|_{L^{p_j}(w_j^{p_j})}
\]
and
\[
[\pmb{W}^{m-j_1-1}]_{\pmb{q}^{m-j_1-1},(\pmb{r},\infty)}\leq C_{\pmb{p},\pmb{q},\pmb{r}}[\pmb{w}]^{\gamma_{m-j_1}\gamma_{m-j_1+1}}_{\pmb{p},(\pmb{r},\infty)}.
\]

Continuing this process, applying Step 1 with $j_0=j_1+k$ for $k=3,\ldots,m-j_1+1$, we conclude, setting $\pmb{W}:=\pmb{W}^0$, that
\begin{equation}\label{eq:fin2mrelab1}
\prod_{j=1}^{m+1}\|f_j\|_{L^{q_j}(W_j^{q_j})}=\prod_{j=1}^{m+1}\|f_j\|_{L^{q^0_j}((W^0_j)^{q^0_j})}\leq (2^m)^{m-j_1+1}\prod_{j=1}^{m+1}\|f_j\|_{L^{p_j}(w_j^{p_j})}
\end{equation}
and
\begin{equation}\label{eq:fin2mrelab2}
[\pmb{W}]_{\pmb{q},(\pmb{r},\infty)}=[\pmb{W}^0]_{\pmb{q}^0,(\pmb{r},\infty)}\leq C_{\pmb{p},\pmb{q},\pmb{r}}[\pmb{w}]^{\prod_{k=1}^{m-j_1+1}\gamma_k}_{\pmb{p},(\pmb{r},\infty)}.
\end{equation}
Since $(2^m)^{m-j_1+1}\leq 2^{m^2}$, we note that \eqref{eq:mainprop1} now follows from \eqref{eq:fin2mrelab1}. Finally, we note that \eqref{eq:mainprop2} follows from \eqref{eq:fin2mrelab2}, provided we can show that
\begin{equation}\label{eq:claimst3}
\prod_{k=1}^{m-j_1+1}\gamma_k=\max_{j=1,\ldots,m+1}\frac{\frac{1}{r_j}-\frac{1}{q_j}}{\frac{1}{r_j}-\frac{1}{p_j}}.
\end{equation}
Note that by our initial assumption \eqref{eq:initpqst3}, this maximum is attained at some $j_2\in\{1,\ldots,j_1\}$.

We claim that
\[
\gamma_k=\frac{\frac{1}{r_{j_2}}-\frac{1}{q^{k-1}_{j_2}}}{\frac{1}{r_{j_2}}-\frac{1}{q^k_{j_2}}}
\]
for all $k\in\{1,\ldots,m-j_1+1\}$. Assuming for the moment that the claim is true, we find that
\[
\prod_{k=1}^{m-j_1+1}\gamma_k=\prod_{k=1}^{m-j_1+1}\frac{\frac{1}{r_{j_2}}-\frac{1}{q^{k-1}_{j_2}}}{\frac{1}{r_{j_2}}-\frac{1}{q^k_{j_2}}}=\frac{\frac{1}{r_{j_2}}-\frac{1}{q^{0}_{j_2}}}{\frac{1}{r_{j_2}}-\frac{1}{q^{m-j_1+1}_{j_2}}}= \frac{\frac{1}{r_{j_2}}-\frac{1}{q_{j_2}}}{\frac{1}{r_{j_2}}-\frac{1}{p_{j_2}}},
\]
proving \eqref{eq:claimst3}.

To prove the claim, we compute
\begin{align*}
\frac{1}{r_j}-\frac{1}{q_j^k}&=\frac{1}{r_j}-\frac{1}{q_j}-\theta_k\left(\frac{1}{r_j}-\frac{1}{q_j}\right)+\theta_k\left(\frac{1}{r_j}-\frac{1}{p_j}\right)\\
&=\left(\frac{1}{r_j}-\frac{1}{p_j}\right)\left((1-\theta_k)\frac{\frac{1}{r_j}-\frac{1}{q_j}}{\frac{1}{r_j}-\frac{1}{p_j}}+\theta_k\right)
\end{align*}
so that
\[
\frac{\frac{1}{r_j}-\frac{1}{q^{k-1}_j}}{\frac{1}{r_j}-\frac{1}{q^k_j}}=\frac{(1-\theta_{k-1})\frac{\frac{1}{r_j}-\frac{1}{q_j}}{\frac{1}{r_j}-\frac{1}{p_j}}+\theta_{k-1}}{(1-\theta_k)\frac{\frac{1}{r_j}-\frac{1}{q_j}}{\frac{1}{r_j}-\frac{1}{p_j}}+\theta_k} =f_k\left(\frac{\frac{1}{r_j}-\frac{1}{q_j}}{\frac{1}{r_j}-\frac{1}{p_j}}\right),
\]
where
\[
f_k(x)=\frac{(1-\theta_{k-1})x+\theta_{k-1}}{(1-\theta_k)x+\theta_k}.
\]
We note that proving the claim is equivalent to proving the equality
\[
\max_{j=1,\ldots,m+1}f_k\left(\frac{\frac{1}{r_j}-\frac{1}{q_j}}{\frac{1}{r_j}-\frac{1}{p_j}}\right)=f_k\left(\max_{j=1,\ldots,m+1}\frac{\frac{1}{r_j}-\frac{1}{q_j}}{\frac{1}{r_j}-\frac{1}{p_j}}\right).
\]
The inequality
\[
f_k\left(\max_{j=1,\ldots,m+1}\frac{\frac{1}{r_j}-\frac{1}{q_j}}{\frac{1}{r_j}-\frac{1}{p_j}}\right)=f_k\left(\frac{\frac{1}{r_{j_2}}-\frac{1}{q_{j_2}}}{\frac{1}{r_{j_2}}-\frac{1}{p_{j_2}}}\right)\leq\max_{j=1,\ldots,m+1}f_k\left(\frac{\frac{1}{r_j}-\frac{1}{q_j}}{\frac{1}{r_j}-\frac{1}{p_j}}\right)
\]
is clear. To prove the converse inequality, it suffices to show that $f_k$ is an increasing function for all $k\in\{1,\ldots,m-j_1+1\}$. Computing
\begin{align*}
f_k'(x)&=\frac{(1-\theta_{k-1})((1-\theta_k)x+\theta_k)-(1-\theta_k)((1-\theta_{k-1})x+\theta_{k-1})}{((1-\theta_k)x+\theta_k)^2}\\
&=\frac{\theta_k-\theta_{k-1}}{((1-\theta_k)x+\theta_k)^2}\geq 0,
\end{align*}
we have proven the desired result. This concludes Step 2. The assertion follows.
\end{proof}

\begin{proof}[Proof of Theorem \ref{thm:qmlre}]
The result essentially follows from an application of Theorem \ref{thm:main}. However, in order to use this result we must reduce to a case where $\frac{1}{p}\leq1$ so that we may set $\frac{1}{p_{m+1}}=1-\frac{1}{p}\geq 0$. To reduce to this case, we employ a general rescaling trick that also appears in the proof of the case $m=1$ given by Auscher and Martell in \cite[Theorem 4.9]{AM07}.

First we will show that we may assume that $\frac{1}{r}:=\sum_{j=1}^m\frac{1}{r_j}=1$. Indeed, assuming we have shown the result for $\frac{1}{r}=1$, we consider the $m+1$-tuple $(|f_1|^r,\ldots,|f_m|^r,|h|^r)$. Then, since
\[
[\vec{w}]^{\frac{1}{r}}_{\frac{\vec{q}}{r},(\frac{\vec{r}}{r},\frac{s}{r})}=[(w_1^{\frac{1}{r}},\ldots,w_m^{\frac{1}{r}})]_{\vec{q},(\vec{r},s)},
\]
we find that for all $\vec{w}\in A_{\frac{\vec{q}}{r},(\frac{\vec{r}}{r},\frac{s}{r})}$ we have
\begin{align*}
\||h|^r\|_{L^{\frac{q}{r}}(w^{\frac{q}{r}})}&=\|h\|^r_{L^q((w^{\frac{1}{r}})^q)}\leq\phi_{\vec{q}}([(w_1^{\frac{1}{r}},\ldots,w_m^{\frac{1}{r}})]_{\vec{q},(\vec{r},s)})^r\prod_{j=1}^m\|f_j\|^r_{L^{q_j}((w_j^{\frac{1}{r}})^{q_j})}\\
&=\phi_{\vec{q}}([\vec{w}]^{\frac{1}{r}}_{\frac{\vec{q}}{r},(\frac{\vec{r}}{r},\frac{s}{r})})^r\prod_{j=1}^m\||f_j|^r\|_{L^{\frac{q_j}{r}}(w_j^{\frac{q_j}{r}})}.
\end{align*}
Thus, since $\sum_{j=1}^m \frac{r}{r_j}=1$, applying the extrapolation result with $\vec{r}$ replaced by $\frac{\vec{r}}{r}$, $\vec{q}$ replaced by $\frac{\vec{q}}{r}$, and $s$ replaced by $\frac{s}{r}$, we find that for any $\frac{\vec{p}}{r}$ with $\frac{\vec{p}}{r}>\frac{\vec{r}}{r}$ and $\frac{p}{r}<\frac{s}{r}$, or equivalently, for all $\vec{p}>(\vec{r},s)$, we have
\begin{align*}
\|h\|_{L^p(w^p)}&=\||h|^r\|^{\frac{1}{r}}_{L^{\frac{p}{r}}((w^r)^{\frac{p}{r}})}\leq\phi_{\frac{\vec{p}}{r},\frac{\vec{q}}{r},\frac{\vec{r}}{r},\frac{s}{r}}([(w_1^r,\ldots,w_m^r)]_{\frac{\vec{p}}{r},(\frac{\vec{r}}{r},\frac{s}{r})})^{\frac{1}{r}}\prod_{j=1}^m\||f_j|^r\|^{\frac{1}{r}}_{L^{\frac{p_j}{r}}((w_j^r)^{\frac{p_j}{r}})}\\
&=\phi_{\frac{\vec{p}}{r},\frac{\vec{q}}{r},\frac{\vec{r}}{r},\frac{s}{r}}([\vec{w}]^r_{\vec{p},(\vec{r},s)})^{\frac{1}{r}}\prod_{j=1}^m\|f_j\|_{L^{p_j}(w_j^{p_j})},
\end{align*}
for all $\vec{w}\in A_{\vec{p},(\vec{r},s)}$, with
\[
\phi_{\frac{\vec{p}}{r},\frac{\vec{q}}{r},\frac{\vec{r}}{r},\frac{s}{r}}([\vec{w}]^r_{\vec{p},(\vec{r},s)})^{\frac{1}{r}}=2^{\frac{m^2}{r}}\phi_{\vec{q}}\Big(C_{\vec{p},\vec{q},\vec{r},s}[\vec{w}]^{ r\max\left(\frac{\frac{1}{r_1}-\frac{1}{q_1}}{\frac{1}{r_1}-\frac{1}{p_1}},\ldots,\frac{\frac{1}{r_m}-\frac{1}{q_m}}{\frac{1}{r_m}-\frac{1}{p_m}},\frac{\frac{1}{q}-\frac{1}{s}}{\frac{1}{p}-\frac{1}{s}}\right)}_{\vec{p},(\vec{r},s)}\Big)^{\frac{1}{r}}
\]
as desired.

Now that we have reduced to the case where $\frac{1}{r}=1$, we have $\frac{1}{s}\leq\frac{1}{p}\leq \sum_{j=1}^m\frac{1}{r_j}=1$. Thus, we may set $\frac{1}{p_{m+1}}:=1-\frac{1}{p}\geq 0$, $\frac{1}{q_{m+1}}:=1-\frac{1}{q}\geq0$, $\frac{1}{r_{m+1}}:=1-\frac{1}{s}\geq0$ and $w_{m+1}:=w^{-1}$.

Let $f_{m+1}\in L^{p_{m+1}}(w_{m+1}^{p_{m+1}})$ and let $\pmb{W}=(W_1,\ldots,W_{m+1})$ be the weights obtained from Theorem~\ref{thm:main}. Setting $\vec{W}=(W_1,\ldots,W_m)$ and $W:=\prod_{j=1}^mW_j$ we find, using the assumption \eqref{eq:multextrapend} and property \eqref{eq:mainprop1} of $\pmb{W}$, that
\begin{equation}\label{eq:mainthmfin}
\begin{split}
|\langle h,f_{m+1}\rangle|&\leq\|h\|_{L^q(W^q)}\|f_{m+1}\|_{L^{q_{m+1}}(W_{m+1}^{q_{m+1}})}\leq\phi_{\vec{q}}([\vec{W}]_{\vec{q},(\vec{r},s)})\prod_{j=1}^{m+1}\|f_j\|_{L^{q_j}(W_j^{q_j})}\\
&\leq 2^{m^2}\phi_{\vec{q}}([\vec{W}]_{\vec{q},(\vec{r},s)})\prod_{j=1}^{m+1}\|f_j\|_{L^{p_j}(w_j^{p_j})}.
\end{split}
\end{equation}
Moreover, it follows from \eqref{eq:mainprop2} that
\begin{align*}
[\vec{W}]_{\vec{q},(\vec{r},s)}&=[\pmb{W}]_{\pmb{q},(\pmb{r},\infty)}\leq C_{\pmb{p},\pmb{q},\pmb{r}}[\pmb{w}]^{\max_{j=1,\ldots,m+1} \left\{\frac{\frac{1}{r_j}-\frac{1}{q_j}}{\frac{1}{r_j}-\frac{1}{p_j}}\right\}}_{\pmb{p},(\pmb{r},\infty)}\\
&=C_{\vec{p},\vec{q},\vec{r},s}[\vec{w}]^{\max\left(\frac{\frac{1}{r_1}-\frac{1}{q_1}}{\frac{1}{r_1}-\frac{1}{p_1}},\ldots,\frac{\frac{1}{r_m}-\frac{1}{q_m}}{\frac{1}{r_m}-\frac{1}{p_m}},\frac{\frac{1}{q}-\frac{1}{s}}{\frac{1}{p}-\frac{1}{s}}\right)}_{\vec{p},(\vec{r},\infty)}.
\end{align*}
By combining this estimate with \eqref{eq:mainthmfin} and by noting that
\[
\|h\|_{L^{p}(w^{p})}=\sup_{\|f_{m+1}\|_{L^{p_{m+1}}(w_{m+1}^{p_{m+1}})}=1}|\langle h,f_{m+1}\rangle|,
\]
the assertion follows.
\end{proof}
\section{Applications of the extrapolation theorem}\label{sec:app}
In applying extrapolation theorems, one can obtain further results by making appropriate choices in the $m+1$-tuples. We provide some applications in this section.
\subsection{Boundedness of operators through extrapolation}
Given an operator $T$ defined on $m$-tuples of functions, one can apply the extrapolation result to the $m+1$-tuples $(f_1,\ldots,f_m,T(f_1,\ldots,f_m))$ to obtain the following extension result:
\begin{theorem}\label{thm:mainopthm}
Let $T$ be an $m$-linear or a positive valued $m$-sublinear operator and suppose that there exist $r_1,\ldots,r_m\in(0,\infty)$, $s\in(0,\infty]$ and $q_1,\ldots,q_m\in(0,\infty]$ with $\vec{q}\geq(\vec{r},s)$ and an increasing function $\phi_{\vec{q}}$ such that
\begin{equation}\label{eq:mainopthm}
\|T\|_{L^{q_1}(w_1^{q_1})\times\cdots\times L^{q_m}(w_m^{q_m})\to L^q(w^q)}\leq\phi_{\vec{q}}([\vec{w}]_{\vec{q},(\vec{r},s)})
\end{equation}
for all $\vec{w}\in A_{\vec{q},(\vec{r},s)}$.

Then for all $p_1,\ldots,p_m\in(0,\infty]$ with $\vec{p}>(\vec{r},s)$ and all weights $\vec{w}\in A_{\vec{p},(\vec{r},s)}$ the operator $T$ extends to a bounded operator $L^{p_1}(w_1^{p_1})\times\cdots\times L^{p_m}(w_m^{p_m})\to L^p(w^p)$. Moreover, $T$ satisfies the bound
\[
\|T\|_{L^{p_1}(w_1^{p_1})\times\cdots\times L^{p_m}(w_m^{p_m})\to L^p(w^p)}\leq2^{\frac{m^2}{r}}\phi_{\vec{q}}\Big(C_{\vec{p},\vec{q},\vec{r},s}[\vec{w}]_{\vec{p},(\vec{r},s)}^{ r\max\left(\frac{\frac{1}{r_1}-\frac{1}{q_1}}{\frac{1}{r_1}-\frac{1}{p_1}},\ldots,\frac{\frac{1}{r_m}-\frac{1}{q_m}}{\frac{1}{r_m}-\frac{1}{p_m}},\frac{\frac{1}{q}-\frac{1}{s}}{\frac{1}{p}-\frac{1}{s}}\right)}\Big)^{\frac{1}{r}},
\]
where $\frac{1}{r}=\sum_{j=1}^m\frac{1}{r_j}$.
\end{theorem}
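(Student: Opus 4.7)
The plan is to deduce Theorem~\ref{thm:mainopthm} as an immediate corollary of the quantitative extrapolation theorem~\ref{thm:qmlre}, applied to the $(m+1)$-tuple $(f_1,\ldots,f_m,T(f_1,\ldots,f_m))$. First I would fix a dense subspace $\mathcal{D}$ of $L^{p_1}(w_1^{p_1})\times\cdots\times L^{p_m}(w_m^{p_m})$ on which $T$ is already defined---for instance, the space of bounded, compactly supported functions, which embeds continuously into every $L^{q_j}(w_j^{q_j})$ and on which the hypothesis \eqref{eq:mainopthm} ensures that $T(f_1,\ldots,f_m)$ lies in $L^q(w^q)$ for each weight $\vec{w}\in A_{\vec{q},(\vec{r},s)}$.

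For any such $m$-tuple $(f_1,\ldots,f_m)\in\mathcal{D}$, I would set $h:=T(f_1,\ldots,f_m)$ and observe that the operator bound \eqref{eq:mainopthm} is exactly the tuple hypothesis \eqref{eq:multextrapinit}, uniform over $\vec{w}\in A_{\vec{q},(\vec{r},s)}$ with the prescribed $\phi_{\vec{q}}$. Theorem~\ref{thm:qmlre} then produces the conclusion \eqref{eq:multextrapend} for every $\vec{p}>(\vec{r},s)$ and every $\vec{w}\in A_{\vec{p},(\vec{r},s)}$, with explicit quantitative dependence given by \eqref{eq:multextraquant}. Unwinding this formula yields precisely the claimed bound on $\|T(f_1,\ldots,f_m)\|_{L^p(w^p)}$ for $(f_1,\ldots,f_m)\in\mathcal{D}$.

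The remaining step is to extend $T$ from $\mathcal{D}$ to a bounded operator on the full product space. In the $m$-linear case this is standard: the quantitative bound together with $m$-linearity lets us extend one argument at a time by density and continuity, and the extension inherits the same operator norm. In the positive $m$-sublinear case I would approximate each $|f_j|$ monotonically by functions in $\mathcal{D}$ and invoke positivity together with Fatou's lemma to transfer the bound to the limit. Either way, uniqueness of the extension and preservation of the norm bound are immediate.

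The main obstacle, such as it is, is conceptual rather than technical: one must recognize that the operator estimate \eqref{eq:mainopthm} is nothing other than an instance of the tuple hypothesis \eqref{eq:multextrapinit}, and that Theorem~\ref{thm:qmlre} was designed to be applied in exactly this fashion, so that the entire content of Theorem~\ref{thm:mainopthm}---including the explicit exponent $r\max(\cdots)$ of $[\vec w]_{\vec p,(\vec r,s)}$ appearing inside $\phi_{\vec{q}}$---is already encoded in the quantitative form of the extrapolation result. Apart from the routine density extension, nothing further needs to be proved.
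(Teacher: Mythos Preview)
Your proposal is correct and matches the paper's own proof essentially step for step: the paper likewise applies Theorem~\ref{thm:qmlre} to the tuple $(f_1,\ldots,f_m,T(f_1,\ldots,f_m))$ with $f_j$ taken from a dense class (simple functions rather than bounded compactly supported ones), reads off the quantitative bound from~\eqref{eq:multextraquant}, and then extends by density using the $m$-linearity or positive $m$-sublinearity of $T$.
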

\begin{proof}
Let $f_1,\ldots,f_m$ be simple functions. By \eqref{eq:mainopthm} we have
\[
\|T(f_1,\ldots,f_m)\|_{L^q(w^q)}\leq\phi_{\vec{q}}([\vec{w}]_{\vec{q},(\vec{r},s)})\prod_{j=1}^m\|f_j\|_{L^{q_j}(w_j^{q_j})}
\]
for all $\vec{w}\in A_{\vec{q},(\vec{r},s)}$. Thus, by applying Theorem \ref{thm:qmlre} to the $m+1$-tuple $(f_1,\ldots,f_m,T(f_1,\ldots,f_m))$ we find that for all $p_1,\ldots,p_m\in(0,\infty]$ with $\vec{p}>(\vec{r},s)$ and all weights $\vec{w}\in A_{\vec{p},(\vec{r},s)}$ we have
\[
\|T(f_1,\ldots,f_m)\|_{L^p(w^p)}\leq\phi_{\vec{p},\vec{q},\vec{r},s}([\vec{w}]_{\vec{p},(\vec{r},s)})\prod_{j=1}^m\|f_j\|_{L^{p_j}(w_j^{p_j})}
\]
with $\phi_{\vec{p},\vec{q},\vec{r},s}$ given by \eqref{eq:multextraquant}. Since this estimate holds for all simple functions $f_1,\ldots,f_m$, the assumptions on $T$ allow us to conclude the results through density.
\end{proof}
The initial estimate \eqref{eq:mainopthm} is often obtained through sparse domination. Once we have an estimate of the form
\[
|\langle T(f_1,\ldots,f_m),g\rangle|\lesssim\sup_{\Sp}\Lambda_{(r_1,\ldots,r_m,s'),\Sp}(f_1,\ldots,f_m,g),
\]
it follows from duality and Proposition \ref{prop:mainsym} that for $p_1,\ldots,p_m\in(0,\infty]$ with $\vec{p}>(\vec{r},s)$ and $\frac{1}{p}<1$, we have
\begin{equation}\label{eq:dualrangesparseest}
\|T\|_{L^{p_1}(w_1^{p_1})\times\cdots\times L^{p_m}(w_m^{p_m})\to L^p(w^p)}\lesssim[\vec{w}]^{\max\left(\frac{\frac{1}{r_1}}{\frac{1}{r_1}-\frac{1}{p_1}},\ldots,\frac{\frac{1}{r_m}}{\frac{1}{r_m}-\frac{1}{p_m}},\frac{\frac{1}{s'}}{\frac{1}{s'}-\frac{1}{p'}}\right)}_{\vec{p},(\vec{r},s)}.
\end{equation}
We are, however, still missing the cases outside of the reflexive range $\frac{1}{p}<1$. One can reach these cases through extrapolation, see \cite{LMO18, LMMOV19}. The novelty in our result is that we also obtain a quantitative weighted bound in this range through Theorem~\ref{thm:mainopthm}.
\begin{corollary}\label{cor:sparseext}
Let $T$ be an $m$-linear or a positive valued $m$-sublinear operator and suppose that there exist $r_1,\ldots,r_m\in(0,\infty)$, $s\in[1,\infty]$ such that for all bounded compactly supported $f_1,\ldots,f_m,g$ we have
\[
|\langle T(f_1,\ldots,f_m),g\rangle|\lesssim\sup_{\Sp}\Lambda_{(r_1,\ldots,r_m,s'),\Sp}(f_1,\ldots,f_m,g),
\]
where the supremum runs over all sparse collections $\Sp$ with a fixed sparsity constant. Then for all $p_1,\ldots,p_m\in(0,\infty]$ with $\vec{p}>(\vec{r},s)$ and all weights $\vec{w}\in A_{\vec{p},(\vec{r},s)}$ the operator $T$ extends to a bounded operator $L^{p_1}(w_1^{p_1})\times\cdots\times L^{p_m}(w_m^{p_m})\to L^p(w^p)$. Moreover, $T$ satisfies the bound
\begin{equation}\label{eq:corsparse}
\|T\|_{L^{p_1}(w_1^{p_1})\times\cdots\times L^{p_m}(w_m^{p_m})\to L^p(w^p)}\lesssim[\vec{w}]_{\vec{p},(\vec{r},s)}^{ \max\left(\frac{\frac{1}{r_1}}{\frac{1}{r_1}-\frac{1}{p_1}},\ldots,\frac{\frac{1}{r_m}}{\frac{1}{r_m}-\frac{1}{p_m}},\frac{1-\frac{1}{s}}{\frac{1}{p}-\frac{1}{s}}\right)}.
\end{equation}
\end{corollary}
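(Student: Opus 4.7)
The plan is to use the sparse hypothesis to produce a \emph{single} weighted initial estimate with a clean quantitative constant, and then extrapolate to every $\vec{p}>(\vec{r},s)$ via Theorem~\ref{thm:mainopthm}. The crucial trick is to choose the initial exponents $\vec{q}$ so that the maximum appearing in the sparse-form bound \eqref{eq:aleq3} of Proposition~\ref{prop:mainsym} collapses to a single value; for this, the $\frac{1}{q_j}$ must be chosen proportional to $\frac{1}{r_j}$.

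Set $R:=\sum_{j=1}^{m}\tfrac{1}{r_j}+\tfrac{1}{s'}$. Because $\vec{p}>(\vec{r},s)$ forces $\sum_{j=1}^m\tfrac{1}{r_j}>\tfrac{1}{p}>\tfrac{1}{s}$, one has $R>1$, so defining
\[
\frac{1}{q_j}:=\frac{1}{R}\cdot\frac{1}{r_j}\quad(j=1,\ldots,m),\qquad\frac{1}{q_{m+1}}:=\frac{1}{R}\cdot\frac{1}{s'}
\]
yields $\sum_{j=1}^{m+1}\tfrac{1}{q_j}=1$, $\tfrac{1}{q_j}<\tfrac{1}{r_j}$, $\tfrac{1}{q_{m+1}}<\tfrac{1}{s'}$, and $\vec{q}\geq(\vec{r},s)$. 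Writing $w_{m+1}:=w^{-1}$, $\pmb{r}:=(r_1,\ldots,r_m,s')$, $\pmb{w}:=(w_1,\ldots,w_{m+1})$, for $\vec{w}\in A_{\vec{q},(\vec{r},s)}$ the sparse hypothesis combined with the duality $(L^{q}(w^{q}))^\ast=L^{q_{m+1}}(w_{m+1}^{q_{m+1}})$ and the estimate \eqref{eq:aleq3} gives
\[
\|T\|_{L^{q_1}(w_1^{q_1})\times\cdots\times L^{q_m}(w_m^{q_m})\to L^q(w^q)}\lesssim[\vec{w}]_{\vec{q},(\vec{r},s)}^{\alpha},\qquad\alpha:=\frac{R}{R-1},
\]
since by the proportional choice every term $\tfrac{1/r_j}{1/r_j-1/q_j}$ and $\tfrac{1/s'}{1/s'-1/q_{m+1}}$ equals $\alpha$, and since the remark after Proposition~\ref{prop:mainsym} identifies $[\pmb{w}]_{\pmb{q},(\pmb{r},\infty)}$ with $[\vec{w}]_{\vec{q},(\vec{r},s)}$.

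Feeding this initial bound with $\phi_{\vec{q}}(t)=ct^\alpha$ into Theorem~\ref{thm:mainopthm} yields the conclusion, provided the resulting exponent
\[
\alpha\cdot\max\Bigl(\tfrac{1/r_1-1/q_1}{1/r_1-1/p_1},\ldots,\tfrac{1/r_m-1/q_m}{1/r_m-1/p_m},\tfrac{1/q-1/s}{1/p-1/s}\Bigr)
\]
matches the target in \eqref{eq:corsparse}. This is the only calculation: from $\tfrac{1}{r_j}-\tfrac{1}{q_j}=\tfrac{1}{r_j}\tfrac{R-1}{R}$ one reads off $\alpha\cdot\tfrac{1/r_j-1/q_j}{1/r_j-1/p_j}=\tfrac{1/r_j}{1/r_j-1/p_j}$, and the identities $\tfrac{1}{q}-\tfrac{1}{s}=\tfrac{1}{s'}-\tfrac{1}{q_{m+1}}=\tfrac{1}{s'}\tfrac{R-1}{R}$ together with $\tfrac{1}{p}-\tfrac{1}{s}=\tfrac{1}{s'}-(1-\tfrac{1}{p})$ give $\alpha\cdot\tfrac{1/q-1/s}{1/p-1/s}=\tfrac{1-1/s}{1/p-1/s}$. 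The main obstacle is precisely the identification of a $\vec{q}$ that removes the max from \eqref{eq:aleq3}; once proportionality is enforced, the symmetric $(m+1)$-tuple framework of Section~\ref{sec:twotwo} makes the remaining algebra routine.
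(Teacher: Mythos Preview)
Your proof is correct and follows essentially the same route as the paper: with $R=\tfrac{1}{\tau}$ your choice $\tfrac{1}{q_j}=\tfrac{1}{R}\cdot\tfrac{1}{r_j}$ is exactly the paper's $\tfrac{1}{q_j}=\tfrac{\tau}{r_j}$, and both proofs then feed the resulting single-exponent sparse bound from Proposition~\ref{prop:mainsym} into Theorem~\ref{thm:mainopthm} and verify the same algebraic cancellation. The only cosmetic difference is that you phrase the initial estimate via the $(m{+}1)$-tuple formalism of Section~\ref{sec:twotwo} directly, whereas the paper quotes \eqref{eq:dualrangesparseest}.
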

\begin{proof}
We set $\frac{1}{\tau}:=\frac{1}{s'}+\sum_{j=1}^m\frac{1}{r_j}$. Assuming the set of $\vec{p}$ satisfying $\vec{p}>(\vec{r},s)$ is non-empty, we have $\tau<1$. Indeed, for such a $\vec{p}$ we have
\[
\frac{1}{\tau}>\frac{1}{p'}+\sum_{j=1}^m\frac{1}{p_j}=1.
\]
Setting $\frac{1}{q_j}:=\frac{\tau}{r_j}<\frac{1}{r_j}$, we have
\[
\frac{1}{q}=\frac{1}{\frac{1}{s'}+\sum_{j=1}^m\frac{1}{r_j}}\sum_{j=1}^m\frac{1}{r_j}=1-\frac{\tau}{s'}
\]
so that
\[
\frac{\frac{1}{r_1}}{\frac{1}{r_1}-\frac{1}{q_1}}=\cdots=\frac{\frac{1}{r_m}}{\frac{1}{r_m}-\frac{1}{q_m}}=\frac{\frac{1}{s'}}{\frac{1}{s'}-\frac{1}{q'}}=\frac{1}{1-\tau}.
\]
Then by \eqref{eq:dualrangesparseest} with this specific choice of $q_j$ we obtain
\[
\|T\|_{L^{q_1}(w_1^{q_1})\times\cdots\times L^{q_m}(w_m^{q_m})\to L^q(w^q)}\lesssim[\vec{w}]^{\frac{1}{1-\tau}}_{\vec{q},(\vec{r},s)}
\]
for all $\vec{w}\in A_{\vec{q},(\vec{r},s)}$. Thus, it follows from \eqref{thm:mainopthm} that for all $p_1,\ldots,p_m\in(0,\infty]$ with $\vec{p}>(\vec{r},s)$ and all weights $\vec{w}\in A_{\vec{p},(\vec{r},s)}$ we have
\begin{equation}\label{eq:corsp1}
\|T\|_{L^{p_1}(w_1^{p_1})\times\cdots\times L^{p_m}(w_m^{p_m})\to L^p(w^p)}\lesssim[\vec{w}]_{\vec{p},(\vec{r},s)}^{ \frac{1}{1-\tau}\max\left(\frac{\frac{1}{r_1}-\frac{1}{q_1}}{\frac{1}{r_1}-\frac{1}{p_1}},\ldots,\frac{\frac{1}{r_m}-\frac{1}{q_m}}{\frac{1}{r_m}-\frac{1}{p_m}},\frac{\frac{1}{q}-\frac{1}{s}}{\frac{1}{p}-\frac{1}{s}}\right)}.
\end{equation}
Noting that
\[
\frac{\frac{1}{r_j}-\frac{1}{q_j}}{\frac{1}{r_j}-\frac{1}{p_j}}=(1-\tau)\frac{\frac{1}{r_j}}{\frac{1}{r_j}-\frac{1}{p_j}},\qquad\frac{\frac{1}{q}-\frac{1}{s}}{\frac{1}{p}-\frac{1}{s}}=(1-\tau)\frac{1-\frac{1}{s}}{\frac{1}{p}-\frac{1}{s}},
\]
the estimate \eqref{eq:corsparse} now follows from \eqref{eq:corsp1}.
\end{proof}
\begin{remark}
We note that in particular the quantitative bound \eqref{eq:corsparse} extends the bound \eqref{eq:dualrangesparseest} obtained from the sparse form, even though the proof only used the sparse bound for the particular values $\frac{1}{p_j}=\frac{\tau}{r_j}$ with $\frac{1}{\tau}=\frac{1}{s'}+\sum_{j=1}^m\frac{1}{r_j}$. It seems that these values are, in some sense, central for the sparse form and the quantitative bound for these values has already appeared in \cite{LMO18}, but giving a quantitative bound for the whole range of $\vec{p}>(\vec{r},s)$ is new. In the case $m=1$ this value becomes $p=r(\frac{1}{s'}+\frac{1}{r})=1+\frac{r}{s'}$ which is the value central in the main theorem of \cite{frey16}. In particular when $r=1$, $s=\infty$ we have $p=2$ which is central in the theory of Calder\'on-Zygmund operators.
\end{remark}

In the full-range case, i.e., when $r_1=\ldots=r_m=1$, $s=\infty$, the particular case we consider becomes $p_1=\ldots=p_m=m+1$ and in \cite{DLP15} a bound in this case for multilinear Calder\'on-Zygmund operators was found. Using the sparse domination result of \cite{DLP15}, this result was extended by Li, Moen, and Sun in \cite{LMS14} to the range of $p_j\in(1,\infty)$ with $\frac{1}{p}\leq 1$. They showed that for a multilinear Calder\'on-Zygmund operator $T$, all $p_1,\ldots,p_m\in (1,\infty)$ with $\frac{1}{p}\leq 1$ and all weights $\vec{w}\in A_{\vec{p}}$ we have
\begin{equation}\label{eq:multiczowbd}
\|T\|_{L^{p_1}(w_1)\times\cdots\times L^{p_m}(w_m)\to L^p(v_{\vec{w}})}\lesssim[\vec{w}]_{A_{\vec{p}}}^{ \max\left(\frac{p_1'}{p},\ldots,\frac{p_m'}{p},1\right)},
\end{equation}
where the class $A_{\vec{p}}$ is defined through the constant in \eqref{eq:frmultiwdef}, and $v_{\vec{w}}:=\prod_{j=1}^m w_j^{\frac{p}{p_j}}$. They proved that this same bound holds even in the case $\frac{1}{p}>1$ for multilinear sparse operators, leading them to conjecture that the bound \eqref{eq:multiczowbd} should also extend to the case $\frac{1}{p}>1$. This conjecture was independently proven to be true by Conde-Alonso and Rey \cite{CR16} and Lerner and Nazarov \cite{LN15} for kernels satisfying $\log$--Dini conditions. We also refer the reader to \cite{lacey17, HRT17}, where the weaker Dini condition was considered in the linear case. The Dini condition was used in the multilinear setting by Dami\'an, Hormozi and Li \cite{DHL18} where, in addition, quantitative mixed multilinear $A_{\vec{p}}$--$A_\infty$ bounds were considered.

Our results yields another proof of the extension of the bound to the case $\frac{1}{p}>1$. To see this, we note that by replacing the $w_j$ by $w_j^{p_j}$ we have $v_{\vec{w}}=\prod_{j=1}^m(w_j^{p_j})^{\frac{p}{p_j}}=w^p$ and
\[
[(w_1^{p_1},\ldots,w_m^{p_m})]_{A_{\vec{p}}}=[\vec{w}]^p_{\vec{p},(\vec{1},\infty)}.
\]
Thus, the result \eqref{eq:multiczowbd} takes the equivalent form
\[
\|T\|_{L^{p_1}(w_1^{p_1})\times\cdots\times L^{p_m}(w_m^{p_m})\to L^p(w^p)}\lesssim[\vec{w}]_{\vec{p},(\vec{1},\infty)}^{ \max\left(p_1',\ldots,p_m',p\right)},
\]
which precisely corresponds to the bound \eqref{eq:dualrangesparseest}. By applying our extrapolation result we can now extend \eqref{eq:multiczowbd}, proving the following:
\begin{corollary}
Let $T$ be an $m$-linear Calder\'on-Zygmund operator. Then for all $p_1,\ldots,p_m\in(1,\infty)$ we have
\[
\|T\|_{L^{p_1}(w_1)\times\cdots\times L^{p_m}(w_m)\to L^p(v_{\vec{w}})}\lesssim[\vec{w}]_{A_{\vec{p}}}^{ \max\left(\frac{p_1'}{p},\ldots,\frac{p_m'}{p},1\right)}.
\]
\end{corollary}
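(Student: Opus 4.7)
The plan is to combine the known sparse domination for multilinear Calder\'on--Zygmund operators with Corollary~\ref{cor:sparseext}, and then translate back to the $A_{\vec{p}}$ notation.

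First, I would invoke the sparse domination theorem for $m$-linear Calder\'on--Zygmund kernels satisfying (say) a $\log$--Dini condition, due to Conde-Alonso--Rey \cite{CR16} and Lerner--Nazarov \cite{LN15}, which asserts that for all bounded compactly supported $f_1,\ldots,f_m,g$,
\[
|\langle T(f_1,\ldots,f_m),g\rangle|\lesssim \sup_{\Sp}\Lambda_{(1,\ldots,1,1),\Sp}(f_1,\ldots,f_m,g),
\]
where the supremum runs over sparse collections of a fixed sparsity constant. This is precisely the hypothesis of Corollary~\ref{cor:sparseext} with $r_1=\cdots=r_m=1$ and $s=\infty$ (so that $s'=1$).

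Next I would apply Corollary~\ref{cor:sparseext} directly. In this case $\vec{p}>(\vec{1},\infty)$ is equivalent to $p_j>1$ for each $j$, with no restriction on the relative size of $\frac{1}{p}$. The general estimate \eqref{eq:corsparse} specializes to
\[
\|T\|_{L^{p_1}(w_1^{p_1})\times\cdots\times L^{p_m}(w_m^{p_m})\to L^p(w^p)}\lesssim [\vec{w}]_{\vec{p},(\vec{1},\infty)}^{\max(p_1',\ldots,p_m',\,p)},
\]
valid for every $\vec{w}\in A_{\vec{p},(\vec{1},\infty)}$ and every choice of $p_1,\ldots,p_m\in(1,\infty)$, with no Banach/quasi-Banach distinction. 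This step is the heart of the argument and leverages the novelty of our quantitative extrapolation: it automatically delivers a bound in the quasi-Banach range $\frac{1}{p}>1$ from the sparse estimate, which by duality only gave the bound for $\frac{1}{p}\leq 1$.

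Finally, I would perform the substitution $w_j\mapsto w_j^{p_j}$ to translate between the two weight normalizations. The identities $v_{\vec{w}}=\prod_{j=1}^m(w_j^{p_j})^{p/p_j}=w^p$ and
\[
[(w_1^{p_1},\ldots,w_m^{p_m})]_{A_{\vec{p}}}=[\vec{w}]_{\vec{p},(\vec{1},\infty)}^{\,p}
\]
recorded in the paper immediately convert the exponent $\max(p_1',\ldots,p_m',p)$ into $\max\bigl(\tfrac{p_1'}{p},\ldots,\tfrac{p_m'}{p},1\bigr)$, yielding the claimed inequality. The only conceptual obstacle is recognizing that Corollary~\ref{cor:sparseext} already does all the work; once invoked, the remaining steps are bookkeeping of exponents and weight rescaling, both of which are routine given the lemmas established earlier in the paper.
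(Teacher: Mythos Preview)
Your proposal is correct and follows essentially the same route as the paper: the corollary is presented there as an immediate consequence of applying Corollary~\ref{cor:sparseext} (with $r_1=\cdots=r_m=1$, $s=\infty$) to the sparse domination for multilinear Calder\'on--Zygmund operators, followed by the weight/exponent translation $w_j\mapsto w_j^{p_j}$ that you describe. The only cosmetic difference is that the paper points to \cite{DLP15} for the multilinear sparse domination rather than \cite{CR16,LN15}, but the argument is otherwise identical.
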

As in Corollary~\ref{cor:sparseext}, our result actually yields weighted bounds for multilinear Calder\'on-Zygmund operators in the more general case $p_1,\ldots,p_m\in(1,\infty]$ with $\frac{1}{p}>0$.
\subsection{Vector-valued extrapolation}
By Fubini's Theorem we are able to extend the extrapolation theorem to a vector-valued setting. In the following result we are considering spaces of the form $L^p(w^p;L^q(\Omega))$ for $p,q\in(0,\infty]$, a weight $w$, and $\Omega$ a $\sigma$-finite measure space. Such spaces consist of functions $f:\R^n\to L^q(\Omega)$ such that the function $\|f\|_{L^q(\Omega)}$ lies in $L^p(w^p)$, with $\|f\|_{L^p(w^p;L^q(\Omega))}:=\big\|\|f\|_{L^q(\Omega)}\big\|_{L^p(w^p)}$. In the case when $p=q$, we can use Fubini's Theorem to find that
\[
\|f\|_{L^q(w^q;L^q(\Omega))}=\big\|\|f\|_{L^q(w^q)}\big\|_{L^q(\Omega)},
\]
valid for any $q\in(0,\infty]$, allowing us to carry over scalar-valued estimates to the vector-valued setting.
\begin{theorem}[Vector-valued extrapolation]\label{thm:vvextrap}
Let $r_1,\ldots,r_m\in(0,\infty)$, $s\in(0,\infty]$. Let $\Omega$ be a $\sigma$-finite measure space, let $q_1,\ldots,q_m\in(0,\infty]$ and $\vec{q}\geq(\vec{r},s)$, and let $(f_1,\ldots,f_m,h)$ be an $m+1$-tuple of measurable functions on $\R^n\times\Omega$. Assume that there is an increasing function $\phi_{\vec{q},\vec{r},s}$ such that the inequality
\begin{equation}\label{eq:multextrapinitvect}
\|h\|_{L^q(w^q)}\leq\phi_{\vec{q},\vec{r},s}([\vec{w}]_{\vec{q},(\vec{r},s)})\prod_{j=1}^m\|f_j\|_{L^{q_j}(w_j^{q_j})}
\end{equation}
holds pointwise a.e. in $\Omega$ for all $\vec{w}\in A_{\vec{q},(\vec{r},s)}$.

Then for all $p_1\,\ldots,p_m\in(0,\infty]$ with $\vec{p}>(\vec{r},s)$ there is an increasing function $\phi_{\vec{p},\vec{q},\vec{r},s}$ such that
\[
\|h\|_{L^p(w^p;L^q(\Omega))}\leq\phi_{\vec{p},\vec{q},\vec{r},s}([\vec{w}]_{\vec{p},(\vec{r},s)})\prod_{j=1}^m\|f_j\|_{L^{p_j}(w_j^{p_j};L^{q_j}(\Omega))}
\]
for all $\vec{w}\in A_{\vec{p},(\vec{r},s)}$. More explicitly, we can take
\[
\phi_{\vec{p},\vec{q},\vec{r},s}(t)=2^{\frac{m^2}{r}}\phi_{\vec{q},\vec{r},s}\Big(C_{\vec{p},\vec{q},\vec{r},s}t^{ r\max\left(\frac{\frac{1}{r_1}-\frac{1}{q_1}}{\frac{1}{r_1}-\frac{1}{p_1}},\ldots,\frac{\frac{1}{r_m}-\frac{1}{q_m}}{\frac{1}{r_m}-\frac{1}{p_m}},\frac{\frac{1}{q}-\frac{1}{s}}{\frac{1}{p}-\frac{1}{s}}\right)}\Big)^{\frac{1}{r}},
\]
where $\frac{1}{r}=\sum_{j=1}^m\frac{1}{r_j}$.
\end{theorem}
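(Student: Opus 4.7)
The plan is to reduce the vector-valued statement to the scalar extrapolation theorem (Theorem~\ref{thm:qmlre}) applied to a suitable scalarization of the tuple. Specifically, I would define the scalar functions on $\R^n$ by
\[
F_j(x) := \|f_j(x,\cdot)\|_{L^{q_j}(\Omega)}, \qquad H(x) := \|h(x,\cdot)\|_{L^q(\Omega)}.
\]
For any exponent $p_j \in (0,\infty]$ and any weight $w_j$, the definition of the mixed norm gives the identity $\|F_j\|_{L^{p_j}(w_j^{p_j})} = \|f_j\|_{L^{p_j}(w_j^{p_j};L^{q_j}(\Omega))}$, and likewise for $H$. Hence the desired vector-valued conclusion would follow at once from the scalar bound
\[
\|H\|_{L^p(w^p)} \leq \phi_{\vec{p},\vec{q},\vec{r},s}([\vec{w}]_{\vec{p},(\vec{r},s)}) \prod_{j=1}^m \|F_j\|_{L^{p_j}(w_j^{p_j})}
\]
for every $\vec{p}>(\vec{r},s)$ and $\vec{w} \in A_{\vec{p},(\vec{r},s)}$.

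The main step is to verify that the scalar tuple $(F_1,\ldots,F_m,H)$ satisfies the initial hypothesis \eqref{eq:multextrapinit} with the same increasing function $\phi_{\vec{q},\vec{r},s}$. Assume first that $q<\infty$. By Fubini's theorem,
\[
\|H\|_{L^q(w^q)}^q = \int_\Omega \|h(\cdot,\omega)\|_{L^q(w^q)}^q \, \mathrm{d}\omega,
\]
and the hypothesis \eqref{eq:multextrapinitvect}, applied pointwise in $\omega$, bounds the integrand by $\phi_{\vec{q},\vec{r},s}([\vec{w}]_{\vec{q},(\vec{r},s)})^q \prod_j \|f_j(\cdot,\omega)\|_{L^{q_j}(w_j^{q_j})}^q$. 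Since $\sum_j q/q_j = 1$, H\"older's inequality in $\omega$, applied on the indices with $q_j<\infty$ and pulling out essential suprema for the indices with $q_j=\infty$, combined with a further appeal to Fubini, bounds the $\omega$-integral by $\prod_j \|F_j\|_{L^{q_j}(w_j^{q_j})}^q$. This yields the scalar hypothesis. The edge case $q=\infty$ forces every $q_j=\infty$, and the argument simplifies to taking essential suprema over $\omega$ on both sides of \eqref{eq:multextrapinitvect}, using that the mixed and iterated $L^\infty$ norms agree.

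With the scalar hypothesis in hand, Theorem~\ref{thm:qmlre} applied to the tuple $(F_1,\ldots,F_m,H)$ produces precisely the target scalar inequality with the announced quantitative function $\phi_{\vec{p},\vec{q},\vec{r},s}$, and the Fubini identities recorded above convert it into the desired vector-valued statement. The only real obstacle is the bookkeeping in the H\"older step when some $q_j=\infty$; however, the normalization $\sum_j 1/q_j = 1/q$ guarantees that the exponents restricted to the finite indices still satisfy H\"older's relation, and the $\sigma$-finiteness of $\Omega$ ensures the repeated applications of Fubini are legitimate, so the reduction proceeds uniformly across all admissible configurations of $\vec{q}$.
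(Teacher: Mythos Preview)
Your proposal is correct and follows essentially the same route as the paper: define the scalar functions $F_j=\|f_j\|_{L^{q_j}(\Omega)}$ and $H=\|h\|_{L^q(\Omega)}$, use Fubini and H\"older to verify the scalar extrapolation hypothesis for the tuple $(F_1,\ldots,F_m,H)$, and then invoke Theorem~\ref{thm:qmlre}. Your treatment of the endpoint cases $q_j=\infty$ and $q=\infty$ is slightly more explicit than the paper's, but the argument is otherwise identical.
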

\begin{proof}
Set $\tilde{f}_j:=\|f_j\|_{L^{q_j}(\Omega)}$, $\tilde{h}:=\|h\|_{L^q(\Omega)}$, which, by Fubini's Theorem, are measurable functions on $\R^n$. Then by Fubini's Theorem, the assumption \eqref{eq:multextrapinitvect}, and H\"older's inequality, we have
\begin{align*}
\|\tilde{h}\|_{L^q(w^q)}&=\big\|\|h\|_{L^q(w^q)}\|\big\|_{L^q(\Omega)}\\
&\leq\phi_{\vec{q},\vec{r},s}([\vec{w}]_{\vec{q},(\vec{r},s)})\big\|\prod_{j=1}^m\|f_j\|_{L^{q_j}(w_j^{q_j})}\big\|_{L^q(\Omega)}\\
&\leq\phi_{\vec{q},\vec{r},s}([\vec{w}]_{\vec{q},(\vec{r},s)})\prod_{j=1}^m\big\|\|f_j\|_{L^{q_j}(w_j^{q_j})}\big\|_{L^{q_j}(\Omega)}\\
&=\phi_{\vec{q},\vec{r},s}([\vec{w}]_{\vec{q},(\vec{r},s)})\prod_{j=1}^m\|\tilde{f}_j\|_{L^{q_j}(w_j^{q_j})}.
\end{align*}
Thus, we may apply Theorem \ref{thm:qmlre} to the $m+1$-tuple $(\tilde{f}_1,\ldots,\tilde{f}_m,\tilde{h})$, proving the result.
\end{proof}
By iterated uses of Fubini's Theorem, a similar argument also allows us to extrapolate to vector-valued bounds with iterated $L^q$-spaces which were considered by Benea and Muscalu through their helicoidal method \cite{BM18}, but we do not detail this here.

We emphasize here that our extrapolation result goes through even if we have $q_j=\infty$ for some $j\in\{1,\ldots,m\}$ in \eqref{eq:multextrapinitvect}. The conclusion of our result then yields vector-valued estimates in the mixed normed spaces $L^{p_j}(L^\infty)$.

If we take $\Omega=\N$ with the counting measure, we obtain vector-valued bounds for $\ell^q$-spaces. Given an $m$-linear operator $T$ and sequences of measurable functions $(f^1_k)_{k\in\N},\ldots,(f^m_k)_{k\in\N}$, we may define
\begin{equation}\label{eq:textlq}
T((f^1_k)_{k\in\N},\ldots,(f^m_k)_{k\in\N}):=(T(f^1_k,\ldots,f^m_k))_{k\in\N}.
\end{equation}
By combining the vector-valued extrapolation theorem with Corollary~\ref{cor:sparseext}, we obtain the following:
\begin{corollary}\label{cor:vecext}
Let $T$ be an $m$-linear operator and suppose that there exist $r_1,\ldots,r_m\in(0,\infty)$, $s\in[1,\infty]$ such that for all bounded compactly supported $f_1,\ldots,f_m,g$ we have
\[
|\langle T(f_1,\ldots,f_m),g\rangle|\lesssim\sup_{\Sp}\Lambda_{(r_1,\ldots,r_m,s'),\Sp}(f_1,\ldots,f_m,g),
\]
where the supremum runs over all sparse collections $\Sp$ with a fixed sparsity constant.

Then for all $p_1\,\ldots,p_m,q_1,\ldots,q_m\in(0,\infty]$ with $\vec{p},\vec{q}>(\vec{r},s)$, the operator $T$ has a bounded extension $L^{p_1}(w_1^{p_1};\ell^{q_1})\times\cdots\times L^{p_m}(w_m^{p_m};\ell^{q_m})\to L^p(w^p;\ell^q)$ given by \eqref{eq:textlq}. Moreover, there is an increasing function $\phi_{\vec{p},\vec{q},\vec{r},s}$ such that
\[
\|T\|_{L^{p_1}(w_1^{p_1};\ell^{q_1})\times\cdots\times L^{p_m}(w_m^{p_m};\ell^{q_m})\to L^p(w^p;\ell^q)}\leq\phi_{\vec{p},\vec{q},\vec{r},s}([\vec{w}]_{\vec{p},(\vec{r},s)})
\]
for all $\vec{w}\in A_{\vec{p},(\vec{r},s)}$. More explicitly, we can take
\begin{equation}\label{eq:extratwice}
\phi_{\vec{p},\vec{q},\vec{r},s}(t)\eqsim t^{\max\left(\frac{\frac{1}{r_1}}{\frac{1}{r_1}-\frac{1}{q_1}},\ldots,\frac{\frac{1}{r_m}}{\frac{1}{r_m}-\frac{1}{q_m}},\frac{1-\frac{1}{s}}{\frac{1}{q}-\frac{1}{s}}\right)\cdot \max\left(\frac{\frac{1}{r_1}-\frac{1}{q_1}}{\frac{1}{r_1}-\frac{1}{p_1}},\ldots,\frac{\frac{1}{r_m}-\frac{1}{q_m}}{\frac{1}{r_m}-\frac{1}{p_m}},\frac{\frac{1}{q}-\frac{1}{s}}{\frac{1}{p}-\frac{1}{s}}\right)}.
\end{equation}
\end{corollary}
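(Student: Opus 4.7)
The plan is to derive the vector-valued bound by combining the two previous results: apply Corollary~\ref{cor:sparseext} once to produce a scalar-valued weighted bound at a single fixed exponent $\vec{q}$, and then apply the vector-valued extrapolation Theorem~\ref{thm:vvextrap} once to lift this to $L^p(\ell^q)$ for the full range $\vec{p}>(\vec{r},s)$. The sparse hypothesis enters only at the very first step; both the vector-valued $\ell^q$ structure and the passage from $\vec{q}$ to $\vec{p}$ in the outer $L^p$ space are then handled uniformly by extrapolation.

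More concretely, I would fix any $\vec{q}$ satisfying $\vec{q}>(\vec{r},s)$ and first invoke Corollary~\ref{cor:sparseext} applied to $T$, obtaining the scalar weighted bound
\[
\|T(g_1,\ldots,g_m)\|_{L^q(w^q)}\lesssim [\vec{w}]_{\vec{q},(\vec{r},s)}^{\gamma(\vec{q})}\prod_{j=1}^m\|g_j\|_{L^{q_j}(w_j^{q_j})}
\]
for all $\vec{w}\in A_{\vec{q},(\vec{r},s)}$ and all bounded compactly supported $g_j$, where
\[
\gamma(\vec{q})=\max\left(\tfrac{1/r_1}{1/r_1-1/q_1},\ldots,\tfrac{1/r_m}{1/r_m-1/q_m},\tfrac{1-1/s}{1/q-1/s}\right).
\]

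Next, I would take sequences $f^j=(f^j_k)_{k\in\N}$ of nice scalar functions, view them as functions on $\R^n\times\N$ with $\N$ carrying the counting measure, and apply the scalar estimate above to $g_j=f^j_k$ for each fixed $k$. Since $T$ acts on sequences by \eqref{eq:textlq}, this is precisely the pointwise-in-$\Omega$ initial hypothesis required by Theorem~\ref{thm:vvextrap}, with input function $\phi_{\vec{q},\vec{r},s}(t)=C\,t^{\gamma(\vec{q})}$. Applying the vector-valued extrapolation theorem with $\Omega=\N$ to the $m+1$-tuple $(f^1,\ldots,f^m,T(f^1,\ldots,f^m))$ then yields, for every $\vec{p}>(\vec{r},s)$ and every $\vec{w}\in A_{\vec{p},(\vec{r},s)}$, a bound of the form asserted in the statement; extension to the whole product space follows by density.

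For the explicit exponent \eqref{eq:extratwice}, it suffices to substitute $\phi_{\vec{q},\vec{r},s}(t)=C\,t^{\gamma(\vec{q})}$ into the formula for $\phi_{\vec{p},\vec{q},\vec{r},s}$ displayed in Theorem~\ref{thm:vvextrap}: the outer exponent $1/r$ cancels the inner factor $r$ inside the second maximum, leaving the exponent on $[\vec{w}]_{\vec{p},(\vec{r},s)}$ equal to
\[
\gamma(\vec{q})\cdot\max\left(\tfrac{1/r_1-1/q_1}{1/r_1-1/p_1},\ldots,\tfrac{1/r_m-1/q_m}{1/r_m-1/p_m},\tfrac{1/q-1/s}{1/p-1/s}\right),
\]
which matches \eqref{eq:extratwice}. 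No substantial obstacle is anticipated; the only delicate point will be correctly handling the endpoint cases $p_j=\infty$ and $q_j=\infty$, which must be interpreted through $\|h\|_{L^\infty(u^\infty)}=\|hu\|_{L^\infty}$ and through $\ell^\infty$-norms in the Fubini step, but both conventions are already built into Corollary~\ref{cor:sparseext} and Theorem~\ref{thm:vvextrap}, so they carry through without extra work.
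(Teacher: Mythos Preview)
Your proposal is correct and follows essentially the same route as the paper: apply Corollary~\ref{cor:sparseext} to obtain the scalar weighted bound at $\vec{q}$ with exponent $\gamma(\vec{q})$, feed this as the pointwise-in-$\Omega$ hypothesis into Theorem~\ref{thm:vvextrap} with $\Omega=\N$, and conclude by density. The paper's proof is terser but identical in structure, and your computation of the explicit exponent by substituting $\phi_{\vec{q},\vec{r},s}(t)=Ct^{\gamma(\vec{q})}$ into the formula from Theorem~\ref{thm:vvextrap} is exactly what is needed to recover \eqref{eq:extratwice}.
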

\begin{proof}
For each $j\in\{1,\ldots,m\}$, let $(f^j_k)_{k\in\N}$ be a sequence of simple functions with at most finitely many non-zero entries. Setting $f_j(x,k):=f_k^j(x)$ and $h(x,k):=T(f_k^1,\ldots,f_k^m)(x)$, it follows from Corollary~\ref{cor:sparseext} that \eqref{eq:multextrapinitvect} is satisfied with
\[
\phi_{\vec{q},\vec{r},s}(t)\eqsim t^{\max\left(\frac{\frac{1}{r_1}}{\frac{1}{r_1}-\frac{1}{q_1}},\ldots,\frac{\frac{1}{r_m}}{\frac{1}{r_m}-\frac{1}{q_m}},\frac{1-\frac{1}{s}}{\frac{1}{q}-\frac{1}{s}}\right)}.
\]
The assertion now follows from Theorem~\ref{thm:vvextrap} and density.
\end{proof}
\begin{remark}\label{rem:vecspdom}
If one can use an argument where extrapolation is only required once, then we may be able to replace the exponent in \eqref{eq:extratwice} by the smaller exponent
\[
\max\left(\frac{\frac{1}{r_1}}{\frac{1}{r_1}-\frac{1}{p_1}},\ldots,\frac{\frac{1}{r_m}}{\frac{1}{r_m}-\frac{1}{p_m}},\frac{1-\frac{1}{s}}{\frac{1}{p}-\frac{1}{s}}\right)
\]
which no longer depends on the exponents of the $\ell^{q_j}$ spaces. One way of doing this is by considering a vector-valued sparse domination rather than a scalar one. Such a sparse domination for the bilinear Hilbert transform is obtained in \cite{BM17}. See also \cite{HH14} where such ideas are used for vector-valued Calder\'on-Zygmund operators.
\end{remark}
\subsection{The bilinear Hilbert transform}
The bilinear Hilbert transform
\[
\BH(f_1,f_2)(x):=\pv\int_\R\!f_1(x-t)f_2(x+t)\,\frac{\mathrm{d}t}{t}
\]
is an integral operator falling outside of the theory of bilinear Calder\'on-Zygmund operators. It was introduced by A. Calder\'on and he wanted to know if it was bounded as an operator from $L^2\times L^\infty$ to $L^2$. This question was answered by Lacey and Thiele and they showed that $\BH$ is bounded $L^{p_1}\times L^{p_2}\to L^p$ for all $p_1,p_2\in(1,\infty]$ with $\frac{2}{3}<p<\infty$, $\frac{1}{p}=\frac{1}{p_1}+\frac{1}{p_2}$, see \cite{LT99}. It is an open problem whether one can remove the condition $\frac{1}{p}<\frac{3}{2}$ or not. However, in this range several weighted bounds and vector-valued extensions have been obtained, some of which we detail here.

Let $r_1,r_2,s\in(1,\infty)$. Then, under certain conditions on $r_1$, $r_2$, and $s$, the sparse domination
\[
|\langle\BH(f_1,f_2),g\rangle|\lesssim\sup_{\Sp}\Lambda_{(r_1,r_2,s'),\Sp}(f_1,f_2,g)
\]
was shown in \cite{CDO16}. These conditions can be formulated in the following equivalent ways:
\begin{lemma}\label{lem:equivbh}
Let $r_1,r_2,s\in(1,\infty)$. Then the following conditions are equivalent:
\begin{enumerate}[(i)]
\item \label{it:bhs1} We have $\max\left(\frac{1}{r_1},\frac{1}{2}\right)+\max\left(\frac{1}{r_2},\frac{1}{2}\right)+\max\left(\frac{1}{s'},\frac{1}{2}\right)<2;$
\item \label{it:bhs2} There exist $\theta_1,\theta_2,\theta_3\in[0,1)$ with $\theta_1+\theta_2+\theta_3=1$ so that
\[
\frac{1}{r_1}<\frac{1+\theta_1}{2},\qquad\frac{1}{r_2}<\frac{1+\theta_2}{2},\qquad\frac{1}{s}>\frac{1-\theta_3}{2}.
\]
\end{enumerate}
\end{lemma}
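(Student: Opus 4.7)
The plan is to normalize the three conditions so they look symmetric and then reduce the problem to a linear inequality in $\theta_1, \theta_2, \theta_3$. Introduce $a_1 := 1/r_1$, $a_2 := 1/r_2$, $a_3 := 1/s'$, and note that the constraint $1/s > (1-\theta_3)/2$ is the same as $a_3 = 1 - 1/s < (1+\theta_3)/2$, since $1/s'=1-1/s$. Thus (ii) is equivalent to the existence of $(\theta_1,\theta_2,\theta_3) \in [0,1)^3$ with $\theta_1+\theta_2+\theta_3 = 1$ and
\[
a_j < \tfrac{1+\theta_j}{2} \qquad (j=1,2,3).
\]
The assumption $r_1,r_2,s\in(1,\infty)$ guarantees $a_j\in(0,1)$ for each $j$.

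Next, I would observe that the constraints $\theta_j\in[0,1)$ and $a_j<(1+\theta_j)/2$ are equivalent to the pair of bounds $\theta_j\in[L_j,1)$ with
\[
L_j := \max\!\bigl(0,\,2a_j-1\bigr) = 2\max\!\bigl(a_j,\tfrac12\bigr)-1,
\]
where the inequality $\theta_j>2a_j-1$ is the strict bound from (ii), automatic when $a_j<1/2$. In this language, (i) becomes exactly $L_1+L_2+L_3<1$, since $\max(a_j,1/2)=(L_j+1)/2$ and hence $\sum_j\max(a_j,1/2)<2$ iff $\sum_j L_j<1$. This reformulation makes the equivalence essentially a tautology.

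For the implication (ii)$\Rightarrow$(i): if $\theta_j>L_j$ for each $j$ and $\theta_1+\theta_2+\theta_3=1$, then $L_1+L_2+L_3<\theta_1+\theta_2+\theta_3=1$, which is (i). For (i)$\Rightarrow$(ii), set
\[
\theta_j := L_j + \tfrac{1}{3}\bigl(1-L_1-L_2-L_3\bigr), \qquad j=1,2,3.
\]
These sum to $1$, satisfy $\theta_j>L_j\ge 0$ by the hypothesis $L_1+L_2+L_3<1$, and obey $\theta_j<1$ because
\[
\theta_j = L_j + \tfrac{1}{3}(1-L_1-L_2-L_3) \le \tfrac{1}{3}\bigl(1+2L_j\bigr) < 1,
\]
using $L_j<1$ (which in turn comes from $a_j<1$, i.e.\ from $r_j,s'\in(1,\infty)$). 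This produces an explicit admissible triple, yielding (ii).

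No real obstacle is expected; the only bookkeeping point is to keep the inequalities strict (so that the constraint $\theta_j\in[0,1)$ with strict right endpoint is preserved) and to use the openness of the range $r_j,s\in(1,\infty)$ to ensure $L_j<1$. I would present the proof in two short paragraphs after the reformulation, one per implication.
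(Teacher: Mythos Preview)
The paper states this lemma without proof, attributing characterization \ref{it:bhs1} to \cite{CDO16} and characterization \ref{it:bhs2} to \cite{BM17}, so there is no argument in the paper to compare against. Your proof is correct and efficient: the substitution $a_3=1/s'$ symmetrizes the three constraints, the identity $L_j=2\max(a_j,\tfrac12)-1$ turns (i) into $L_1+L_2+L_3<1$, and the explicit choice $\theta_j=L_j+\tfrac13(1-L_1-L_2-L_3)$ handles (i)$\Rightarrow$(ii) cleanly.

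One minor wording slip in (ii)$\Rightarrow$(i): you write ``if $\theta_j>L_j$ for each $j$'', but when $a_j<\tfrac12$ one only has $\theta_j\ge L_j=0$, and $\theta_j=0$ is permitted. The conclusion $\sum_j L_j<1$ still follows: either some $a_j\ge\tfrac12$, in which case $L_j=2a_j-1<\theta_j$ strictly for that index and hence $\sum_j L_j<\sum_j\theta_j=1$; or every $a_j<\tfrac12$, in which case $\sum_j L_j=0<1$ trivially. This is a one-line patch and does not affect the validity of your argument.
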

The sparse domination in terms of characterization \ref{it:bhs1} was obtained by Culiuc, Di Plinio and Ou in \cite{CDO16} and characterization \ref{it:bhs2} was used in \cite{BM17} where more general vector-valued sparse domination results were obtained.

Note that if we have $r_1,r_2,s\in(1,\infty)$ satisfying one of the equivalent conditions \ref{it:bhs1} or \ref{it:bhs2} and we have $p_1,p_2\in(1,\infty]$ with $\vec{p}>(\vec{r},s)$, then
\[
\frac{1}{p}=\frac{1}{p_1}+\frac{1}{p_2}<\max\left(\frac{1}{r_1},\frac{1}{2}\right)+\max\left(\frac{1}{r_2},\frac{1}{2}\right)<2-\max\left(\frac{1}{s'},\frac{1}{2}\right)\leq\frac{3}{2}
\]
so that we are still in the range of Lacey and Thiele.

From the sparse domination result for $\BH$, it was deduced in \cite{CDO16} that we have the weighted bounds $\BH:L^{p_1}(w_1^{p_1})\times L^{p_1}(w_1^{p_1})\to L^{p}(w^p)$ for all $p_1,p_2\in(1,\infty)$ with $\vec{p}>(\vec{r},s)$ in the Banach range $p>1$ and for all $\vec{w}\in A_{\vec{p},(\vec{r},s)}$. These weighted bounds were used in \cite{CM17} to obtain weighted and vector-valued estimates in the range $p\leq 1$ through extrapolation using products of $A_p$ classes. This result was extended in \cite{LMO18} where the full multilinear weight classes were used, but only the cases for finite $p_j$ were treated. However, their methods can be used to also obtain the cases with $p_j=\infty$ \cite{LMMOV19}. By applying Corollary~\ref{cor:sparseext} and Corollary~\ref{cor:vecext} we obtain the following result:
\begin{corollary}\label{cor:bhfinests}
Let $r_1,r_2,s\in(1,\infty)$ satisfy one of the equivalent conditions in Lemma~\ref{lem:equivbh}. Then for all $p_1,p_2\in(1,\infty]$ with $\vec{p}>(\vec{r},s)$ we have
\[
\|\BH\|_{L^{p_1}(w_1^{p_1})\times L^{p_2}(w_2^{p_2})\to L^p(w^p)}\lesssim[\vec{w}]_{\vec{p},(\vec{r},s)}^{ \max\left(\frac{\frac{1}{r_1}}{\frac{1}{r_1}-\frac{1}{p_1}},\frac{\frac{1}{r_2}}{\frac{1}{r_2}-\frac{1}{p_2}},\frac{1-\frac{1}{s}}{\frac{1}{p}-\frac{1}{s}}\right)}.
\]
for all $\vec{w}\in A_{\vec{p},(\vec{r},s)}$.

Moreover, for all $p_1,p_2,q_1,q_2\in(1,\infty]$ with $\vec{p},\vec{q}>(\vec{r},s)$ there is an increasing function $\phi_{\vec{p},\vec{q},\vec{r},s}$ such that
\begin{equation}\label{eq:bhwvecest}
\|\BH\|_{L^{p_1}(w_1^{p_1};\ell^{q_1})\times L^{p_2}(w_2^{p_2};\ell^{q_2})\to L^p(w^p;\ell^q)}\lesssim\phi_{\vec{p},\vec{q},\vec{r},s}([\vec{w}]_{\vec{p},(\vec{r},s)})
\end{equation}
for all $\vec{w}\in A_{\vec{p},(\vec{r},s)}$.
\end{corollary}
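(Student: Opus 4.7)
The plan is to combine the scalar sparse domination for $\BH$ established by Culiuc, Di Plinio, and Ou in \cite{CDO16} with the two abstract consequences of our extrapolation theorem proved just above, namely Corollary~\ref{cor:sparseext} and Corollary~\ref{cor:vecext}. The scalar bound will come out of Corollary~\ref{cor:sparseext} with its quantitative exponent, and the vector-valued bound will come out of Corollary~\ref{cor:vecext}; there is genuinely no new analytic input required on our side.

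Concretely, under the equivalent hypotheses of Lemma~\ref{lem:equivbh}, the result of \cite{CDO16} supplies a sparsity constant and an implicit constant such that
\[
|\langle\BH(f_1,f_2),g\rangle|\lesssim\sup_{\Sp}\Lambda_{(r_1,r_2,s'),\Sp}(f_1,f_2,g)
\]
for every triple of bounded compactly supported functions $f_1,f_2,g$, with the supremum over all sparse collections $\Sp$ with that fixed sparsity constant. This is precisely the hypothesis of Corollary~\ref{cor:sparseext} in the case $m=2$ with the given $r_1,r_2,s$.

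First I would apply Corollary~\ref{cor:sparseext}. For any $p_1,p_2\in(1,\infty]$ with $\vec{p}>(\vec{r},s)$ and any $\vec{w}\in A_{\vec{p},(\vec{r},s)}$, it yields the bound
\[
\|\BH\|_{L^{p_1}(w_1^{p_1})\times L^{p_2}(w_2^{p_2})\to L^p(w^p)}\lesssim[\vec{w}]_{\vec{p},(\vec{r},s)}^{\max\left(\frac{\frac{1}{r_1}}{\frac{1}{r_1}-\frac{1}{p_1}},\frac{\frac{1}{r_2}}{\frac{1}{r_2}-\frac{1}{p_2}},\frac{1-\frac{1}{s}}{\frac{1}{p}-\frac{1}{s}}\right)},
\]
which is exactly the first assertion. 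Note that the restriction to $p_1,p_2>1$ in the statement is automatic from $r_1,r_2>1$ and $\vec{p}>(\vec{r},s)$, and the discussion before the corollary shows that $\frac{1}{p}<\frac{3}{2}$, keeping us inside the Lacey--Thiele range.

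Then, feeding the same sparse domination into Corollary~\ref{cor:vecext} with $m=2$, one gets at once the vector-valued extension
\[
\|\BH\|_{L^{p_1}(w_1^{p_1};\ell^{q_1})\times L^{p_2}(w_2^{p_2};\ell^{q_2})\to L^p(w^p;\ell^q)}\leq\phi_{\vec{p},\vec{q},\vec{r},s}([\vec{w}]_{\vec{p},(\vec{r},s)})
\]
for all $p_1,p_2,q_1,q_2\in(1,\infty]$ with $\vec{p},\vec{q}>(\vec{r},s)$ and all $\vec{w}\in A_{\vec{p},(\vec{r},s)}$, with an explicit increasing $\phi_{\vec{p},\vec{q},\vec{r},s}$ as in \eqref{eq:extratwice}. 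Since everything is a direct specialization of the two corollaries, there is no substantial obstacle to overcome; the only thing one must be careful about is lining up the hypothesis of the corollaries (sparse domination with a fixed sparsity constant, valid on bounded compactly supported inputs) with the form in which \cite{CDO16} states it, which is straightforward.
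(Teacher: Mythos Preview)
Your proposal is correct and matches the paper's own approach exactly: the paper states the corollary as an immediate consequence of applying Corollary~\ref{cor:sparseext} and Corollary~\ref{cor:vecext} to the sparse domination of \cite{CDO16}, with no additional argument given.
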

While Corollary~\ref{cor:vecext} gives us an expression for the increasing function $\phi_{\vec{p},\vec{q},\vec{r},s}$ in \eqref{eq:bhwvecest}, this estimate will not be sharp in general, see also Remark~\ref{rem:vecspdom}. Rather, a better quantitative estimate can be obtained if one applies our extrapolation result to weighted bounds that can be obtained from the vector-valued sparse domination result obtained in \cite[Theorem 1]{BM17}, but we do not pursue this further here.

Our result should be compared with \cite[Corollary 2.17]{LMO18} and \cite[Theorem 3]{BM18}. Qualitatively, we completely recover the results on weighted boundedness in \cite[Corollary 2.17]{LMO18} and extend it in the sense that we also include the cases where either $p_1$ or $p_2$ is equal to $\infty$ and where either $q_1$ or $q_2$ is equal to $\infty$, but this can also be done through their methods \cite{LMMOV19}. If, for example $p_1=\infty$, then we have $p_2=p$ and our scalar bound takes the form
\[
\|\BH(f_1,f_2)\|_{L^p(w^p)}\lesssim[\vec{w}]_{(\infty,p),(\vec{r},s)}^{ \max\left(\frac{\frac{1}{r_2}}{\frac{1}{r_2}-\frac{1}{p}},\frac{1-\frac{1}{s}}{\frac{1}{p}-\frac{1}{s}}\right)}\|f_1w_1\|_{L^\infty}\|f_2\|_{L^{p}(w_2^p)}
\]
for all $p\in(r_2,s)$ and all weights $w_1$, $w_2$ satisfying
\[
[\vec{w}]_{(\infty,p),(\vec{r},s)}=\sup_{Q}\langle w_1^{-1}\rangle_{r_1,Q}\langle w_2^{-1}\rangle_{\frac{1}{\frac{1}{r_2}-\frac{1}{p}}}\langle w_1w_2\rangle_{\frac{1}{\frac{1}{p}-\frac{1}{s}}}<\infty.
\]
This is also slightly more general than the weighted bounds in \cite[Corollary 3]{BM17} in this endpoint case since they only formulate their result in the case $p_1=\infty$ when $w_1=1$ (or more generally, $p_j=\infty$ when $w_j=1$), but their methods do allow for this more general case.

The result \cite[Theorem 3]{BM18} asserts that if $p_1,p_2,q_1,q_2\in(1,\infty]$ satisfy $\vec{p},\vec{q}>(\vec{r},s)$ for $r_1,r_2,s\in(1,\infty)$ satisfying one of the equivalent properties of Lemma~\ref{lem:equivbh}, then we have
\begin{equation}\label{eq:bm18vect}
\|\BH\|_{L^{p_1}(\ell^{q_1})\times L^{p_2}(\ell^{q_2})\to L^p(\ell^q)}<\infty.
\end{equation}
This result is completely recovered in Corollary~\ref{cor:bhfinests} in the unweighted version of \eqref{eq:bhwvecest}.

By again extrapolating from the weighted vector-valued bounds we can also consider iterated $\ell^q$ spaces in our results. For example, by applying Theorem~\ref{thm:vvextrap} to the weighted vector valued bounds \eqref{eq:bhwvecest}, one can obtain
\[
\BH:L^{p_1}(\ell^2(\ell^\infty)) \times L^{p_2}(\ell^\infty(\ell^2))\to L^p(\ell^2(\ell^2))
\]
for all $p_1,p_2\in(1,\infty]$ with $\frac{2}{3}<p<\infty$. Such bounds were already obtained in \cite{BM16} through the helicoidal method, but could not be obtained through earlier extrapolation results. More precisely, to obtain this result through extrapolation one needs to be able to extrapolate away from weighted $L^\infty$ estimates which is one of our novelties. These type of multiple vector-valued bounds can be applied to prove boundedness results of operators such as the tensor product of $\BH$ and paraproducts and we refer the reader to \cite{BM16} for an overview of such operators.
\subsection{Endpoint extrapolation results}
Finally, we shall discuss some of the endpoint estimates one can extrapolate from.

The following is an extrapolation result involving weak-type estimates. The trick used to obtain this result is well-known and can be found already in \cite{GM04}.
\begin{theorem}[Weak type extrapolation]
Let $(f_1,\ldots,f_m,h)$ be an $m+1$-tuple of measurable functions and let $r_1,\ldots,r_m\in(0,\infty)$, $s\in(0,\infty]$. Suppose that for some $q_1,\ldots,q_m\in(0,\infty]$ with $\vec{q}\geq(\vec{r},s)$ there is an increasing function $\phi_{\vec{q}}$ such that
\begin{equation}\label{eq:weakmultextrapinit}
\|h\|_{L^{q,\infty}(w^q)}\leq\phi_{\vec{q}}([\vec{w}]_{\vec{q},(\vec{r},s)})\prod_{j=1}^m\|f_j\|_{L^{q_j}(w_j^{q_j})}
\end{equation}
for all $\vec{w}\in A_{\vec{q},(\vec{r},s)}$.

Then for all $p_1\,\ldots,p_m\in(0,\infty]$ with $\vec{p}>(\vec{r},s)$ there is an increasing function $\phi_{\vec{p},\vec{q},\vec{r},s}$ such that
\begin{equation}\label{eq:weakmultextrapend}
\|h\|_{L^{p,\infty}(w^p)}\leq\phi_{\vec{p},\vec{q},\vec{r},s}([\vec{w}]_{\vec{p},(\vec{r},s)})\prod_{j=1}^m\|f_j\|_{L^{p_j}(w_j^{p_j})}
\end{equation}
for all $\vec{w}\in A_{\vec{p},(\vec{r},s)}$. More explicitly, we can take
\begin{equation}\label{eq:weakmultextraquant}
\phi_{\vec{p},\vec{q},\vec{r},s}(t)=2^{\frac{m^2}{r}}\phi_{\vec{q}}\Big(C_{\vec{p},\vec{q},\vec{r},s}t^{ r\max\left(\frac{\frac{1}{r_1}-\frac{1}{q_1}}{\frac{1}{r_1}-\frac{1}{p_1}},\ldots,\frac{\frac{1}{r_m}-\frac{1}{q_m}}{\frac{1}{r_m}-\frac{1}{p_m}},\frac{\frac{1}{q}-\frac{1}{s}}{\frac{1}{p}-\frac{1}{s}}\right)}\Big)^{\frac{1}{r}},
\end{equation}
where $\frac{1}{r}=\sum_{j=1}^m\frac{1}{r_j}$.
\end{theorem}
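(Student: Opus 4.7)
The plan is to reduce to the already-established strong-type extrapolation result, Theorem~\ref{thm:qmlre}, by linearizing the weak-type norm in a standard way (a trick that goes back at least to \cite{GM04}). The key observation is that for any measurable $h$ and any weight $u$ with $p\in(0,\infty)$,
\[
\|h\|_{L^{p,\infty}(u)}=\sup_{\lambda>0}\lambda\, u(\{|h|>\lambda\})^{1/p},
\]
so it suffices, for each fixed $\lambda>0$, to bound $\lambda\, w^p(E_\lambda)^{1/p}$ where $E_\lambda:=\{|h|>\lambda\}$.

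First I would fix $\lambda>0$ and introduce the truncated function $h_\lambda:=\lambda\chi_{E_\lambda}$. Since $h_\lambda\leq|h|$ pointwise and
\[
\|h_\lambda\|_{L^q(w^q)}=\lambda\, w^q(E_\lambda)^{1/q}\leq\|h\|_{L^{q,\infty}(w^q)},
\]
the hypothesis \eqref{eq:weakmultextrapinit} applied to the $m+1$-tuple $(f_1,\ldots,f_m,h)$ immediately yields the strong-type estimate
\[
\|h_\lambda\|_{L^q(w^q)}\leq\phi_{\vec{q}}([\vec{w}]_{\vec{q},(\vec{r},s)})\prod_{j=1}^m\|f_j\|_{L^{q_j}(w_j^{q_j})}
\]
for every $\vec{w}\in A_{\vec{q},(\vec{r},s)}$, with the same function $\phi_{\vec{q}}$. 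In other words, the $m+1$-tuple $(f_1,\ldots,f_m,h_\lambda)$ satisfies the hypothesis \eqref{eq:multextrapinit} of Theorem~\ref{thm:qmlre}.

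Next I would apply Theorem~\ref{thm:qmlre} directly to this $m+1$-tuple. For every $\vec{p}>(\vec{r},s)$ and every $\vec{w}\in A_{\vec{p},(\vec{r},s)}$ this produces
\[
\|h_\lambda\|_{L^p(w^p)}\leq\phi_{\vec{p},\vec{q},\vec{r},s}([\vec{w}]_{\vec{p},(\vec{r},s)})\prod_{j=1}^m\|f_j\|_{L^{p_j}(w_j^{p_j})},
\]
where $\phi_{\vec{p},\vec{q},\vec{r},s}$ is exactly the function appearing in \eqref{eq:multextraquant}, which is identical to the claimed expression \eqref{eq:weakmultextraquant}. Since $\|h_\lambda\|_{L^p(w^p)}=\lambda\, w^p(E_\lambda)^{1/p}$, dividing nothing and taking the supremum over $\lambda>0$ gives \eqref{eq:weakmultextrapend}, as required. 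Importantly the bound on the right is independent of $\lambda$, so the supremum passes cleanly.

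There is essentially no serious obstacle here: the main thing to be careful about is the interpretation of the endpoint $p=\infty$, in which case $\|h\|_{L^{\infty,\infty}(w^\infty)}=\|hw\|_{L^\infty}$ and the pointwise bound $h_\lambda\leq|h|$ directly gives $\|h_\lambda w\|_{L^\infty}\leq\|hw\|_{L^\infty}$, so the same argument applies. Similarly, when $q=\infty$, the hypothesis gives $\|h_\lambda w\|_{L^\infty}\leq\phi_{\vec{q}}([\vec{w}]_{\vec{q},(\vec{r},s)})\prod_j\|f_j\|_{L^{q_j}(w_j^{q_j})}$ since $h_\lambda\leq|h|$. The quantitative form of $\phi_{\vec{p},\vec{q},\vec{r},s}$ is inherited verbatim from Theorem~\ref{thm:qmlre}, so no additional computation is needed for \eqref{eq:weakmultextraquant}.
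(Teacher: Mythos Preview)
Your proof is correct and follows essentially the same approach as the paper: define $h_\lambda:=\lambda\chi_{\{|h|>\lambda\}}$, observe that $\|h_\lambda\|_{L^q(w^q)}\leq\|h\|_{L^{q,\infty}(w^q)}$ so that the tuple $(f_1,\ldots,f_m,h_\lambda)$ satisfies the strong-type hypothesis of Theorem~\ref{thm:qmlre}, apply that theorem, and take the supremum over $\lambda$. Your additional remarks about the endpoint cases $q=\infty$ and $p=\infty$ are not needed (note that $\vec{p}>(\vec{r},s)$ forces $p<s\leq\infty$, so $p$ is always finite in the conclusion), but they do no harm.
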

\begin{proof}
Let $\lambda>0$ and set $E_\lambda:=\{x\in\R^n:|h(x)|>\lambda\}$. We define
\[
h_\lambda:=\lambda \chi_{E_\lambda}
\]
and note that by \eqref{eq:weakmultextrapinit} we have
\[
\|h_\lambda\|_{L^q(w^q)}=\lambda \big(w^q(E_\lambda)\big)^{\frac{1}{q}}\leq\|h\|_{L^{q,\infty}(w^q)}\leq\phi_{\vec{q}}([\vec{w}]_{\vec{q},(\vec{r},s)})\prod_{j=1}^m\|f_j\|_{L^{q_j}(w_j^{q_j})}
\]
Thus, by applying Theorem~\ref{thm:qmlre} to the $m+1$-tuple $(f_1,\ldots,f_m,h_\lambda)$ we conclude that for all $p_1\,\ldots,p_m\in(0,\infty]$ with $\vec{p}>(\vec{r},s)$ there is an increasing function $\phi_{\vec{p},\vec{q},\vec{r},s}$ such that
\[
\|h_\lambda\|_{L^p(w^p)}\leq\phi_{\vec{p},\vec{q},\vec{r},s}([\vec{w}]_{\vec{p},(\vec{r},s)})\prod_{j=1}^m\|f_j\|_{L^{p_j}(w_j^{p_j})}
\]
for all $\vec{w}\in A_{\vec{p},(\vec{r},s)}$, with $\phi_{\vec{p},\vec{q},\vec{r},s}$ given by \eqref{eq:weakmultextraquant}. As $\lambda>0$ was arbitrary, noting that $\sup_{\lambda>0}\|h_\lambda\|_{L^p(w^p)}=\|h\|_{L^{p,\infty}(w^p)}$ proves \eqref{eq:weakmultextrapend}. The assertion follows.
\end{proof}
As a consequence we can extrapolate from weak lower endpoint estimates in cases where strong bounds are not available. Passing to the full-range case where $r_1=\cdots=r_m=1$ and $s=\infty$, writing $\vec{1}$ for the vector consisting of $m$ components all equal to $1$, we obtain the following corollary:
\begin{corollary}
Let $(f_1,\ldots,f_m,h)$ be an $m+1$-tuple of measurable functions and suppose that there is an increasing function $\phi$ such that
\[
\|h\|_{L^{\frac{1}{m},\infty}(w^{\frac{1}{m}})}\leq\phi([\vec{w}]_{\vec{1},(\vec{1},\infty)})\prod_{j=1}^m\|f_j\|_{L^1(w_j)}
\]
for all $\vec{w}\in A_{\vec{1},(\vec{1},\infty)}$.

Then for all $p_1\,\ldots,p_m\in(1,\infty]$ with $\frac{1}{p}>0$ there is an increasing function $\phi_{\vec{p}}$ such that
\[
\|h\|_{L^{p,\infty}(w^p)}\leq\phi_{\vec{p}}([\vec{w}]_{\vec{p},(\vec{1},\infty)})\prod_{j=1}^m\|f_j\|_{L^{p_j}(w_j^{p_j})}
\]
for all $\vec{w}\in A_{\vec{p},(\vec{1},\infty)}$. More explicitly, we can take
\[
\phi_{\vec{p}}(t)=2^{m^3}\phi\Big(C_{\vec{p}}t^p\Big)^m.
\]
\end{corollary}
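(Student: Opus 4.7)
The plan is to apply the weak-type extrapolation theorem proved immediately above, specialized to the full-range parameters $r_1=\cdots=r_m=1$, $s=\infty$, and $q_1=\cdots=q_m=1$. With this choice $q=\frac{1}{m}$, the weight class $A_{\vec{q},(\vec{r},s)}$ becomes $A_{\vec{1},(\vec{1},\infty)}$, and the initial weak-type estimate in that theorem coincides verbatim with the hypothesis of the corollary. The conclusion will then furnish weak-type bounds at every $\vec{p}$ satisfying $p_j>1$ and $\frac{1}{p}>0$, which is precisely the range $\vec{p}>(\vec{1},\infty)$ appearing in the corollary.

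The first thing to verify is the hypothesis $\vec{q}\geq(\vec{r},s)$ of the weak-type extrapolation theorem. In our specialization $q_j=r_j=1$ for every $j$, so we sit on the boundary of this inequality. The remark following Theorem~\ref{thm:qmlre} covers exactly this situation: each indeterminate ratio $\frac{1/r_j-1/q_j}{1/r_j-1/p_j}=0/0$ is to be interpreted as $1$, after which the remainder of the extrapolation argument proceeds unchanged.

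It then remains to compute the quantitative bound \eqref{eq:weakmultextraquant} explicitly. Here $\frac{1}{r}=\sum_{j=1}^m\frac{1}{r_j}=m$, so the outer power is $1/r=m$ and the constant factor is $2^{m^2/r}=2^{m^3}$. Using the boundary interpretation, the maximum over the $m+1$ ratios reduces to
\[
\max\!\left(1,\ldots,1,\frac{\frac{1}{q}-\frac{1}{s}}{\frac{1}{p}-\frac{1}{s}}\right)=\max(1,mp).
\]
Since $p_j>1$ for each $j$ gives $\frac{1}{p}=\sum_{j=1}^m\frac{1}{p_j}<m$, we have $mp>1$, so the maximum equals $mp$. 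Multiplying by $r=1/m$ produces the exponent $p$ on $t$ inside $\phi$, yielding precisely $\phi_{\vec{p}}(t)=2^{m^3}\phi(C_{\vec{p}}t^p)^m$ as claimed.

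The main obstacle, though a small one, is this boundary-case bookkeeping: one must confirm that the limiting case $\vec{q}=\vec{r}$ is allowed by the extrapolation theorem and that $mp>1$ holds throughout the range $\vec{p}>(\vec{1},\infty)$ so that the stated exponent is the correct one. Once these are in place, the corollary is a direct substitution into the weak-type extrapolation theorem.
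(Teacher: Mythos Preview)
Your approach is correct and matches the paper's (implicit) one: the corollary is simply the weak-type extrapolation theorem specialized to $r_j=q_j=1$ and $s=\infty$. However, there is a small bookkeeping slip. Since $p_j>1$, the ratios $\frac{1/r_j-1/q_j}{1/r_j-1/p_j}=\frac{0}{1-1/p_j}$ are not indeterminate; they equal $0$, not $1$. The remark after Theorem~\ref{thm:qmlre} about the convention ``$=1$'' applies only when one also wants to extrapolate \emph{to} $p_j=r_j$, which is not the case here. The hypothesis $\vec{q}\geq(\vec{r},s)$ already permits $q_j=r_j$, so no special justification is needed to start from $\vec{q}=\vec{1}$. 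Your maximum should thus read $\max(0,\ldots,0,mp)=mp$ (automatic since $mp>0$), after which the remaining computation is unchanged and yields the stated $\phi_{\vec{p}}$.
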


On the other hand, we can also extrapolate from the upper endpoints. An application of Theorem~\ref{thm:qmlre} in the $s=\infty$ case with $q_1=\cdots=q_m=\infty$, writing $\vec{\infty}$ for the vector consisting of $m$ components all equal to $\infty$, yields the following:
\begin{theorem}[Upper endpoint extrapolation]\label{thm:upendextrap}
Let $(f_1,\ldots,f_m,h)$ be an $m+1$-tuple of measurable functions and let $r_1,\ldots,r_m\in(0,\infty)$. Suppose that there is an increasing function $\phi$ such that
\[
\|hw\|_{L^{\infty}}\leq\phi([\vec{w}]_{\vec{\infty},(\vec{r},\infty)})\prod_{j=1}^m\|f_jw_j\|_{L^\infty}
\]
for all $\vec{w}\in A_{\vec{\infty},(\vec{r},\infty)}$.

Then for all $p_1\,\ldots,p_m\in(0,\infty]$ with $\vec{p}>\vec{r}$, there is an increasing function $\phi_{\vec{p},\vec{r}}$ such that
\[
\|h\|_{L^p(w^p)}\leq\phi_{\vec{p},\vec{r}}([\vec{w}]_{\vec{p},(\vec{r},\infty)})\prod_{j=1}^m\|f_j\|_{L^{p_j}(w_j^{p_j})}
\]
for all $\vec{w}\in A_{\vec{p},(\vec{r},\infty)}$. More explicitly, we can take
\[
\phi_{\vec{p},\vec{r}}(t)=2^{\frac{m}{r}}\phi\Big(C_{\vec{p},\vec{r}}t^{r\max_{j=1,\ldots,m}\left\{\frac{\frac{1}{r_j}}{\frac{1}{r_j}-\frac{1}{p_j}}\right\}}\Big)^{\frac{1}{r}},
\]
where $\frac{1}{r}=\sum_{j=1}^m\frac{1}{r_j}$.
\end{theorem}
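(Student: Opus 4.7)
The plan is to deduce the result by a direct application of Theorem~\ref{thm:qmlre} with the specific choice $q_1=q_2=\cdots=q_m=\infty$ and $s=\infty$. Under our conventions we then have $\frac{1}{q}=\sum_{j=1}^m\frac{1}{q_j}=0$, so that $\|h\|_{L^q(w^q)}=\|hw\|_{L^\infty}$ and $\|f_j\|_{L^{q_j}(w_j^{q_j})}=\|f_jw_j\|_{L^\infty}$, and the hypothesis of Theorem~\ref{thm:upendextrap} is exactly the starting estimate \eqref{eq:multextrapinit} of the quantitative extrapolation theorem for this choice of $\vec{q}$ and $s$. The relation $\vec{q}\geq(\vec{r},s)$ holds trivially since $q_j=\infty\geq r_j$ and $q=\infty=s$; here we are in the boundary case $q=s$, so we invoke the remark immediately after Theorem~\ref{thm:qmlre} to allow $p=s=\infty$ in the conclusion, i.e.\ to cover all $\vec{p}>\vec{r}$.

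Next I evaluate the exponent appearing in \eqref{eq:multextraquant}. Since $\frac{1}{q_j}=0$, we have $\frac{1/r_j-1/q_j}{1/r_j-1/p_j}=\frac{1/r_j}{1/r_j-1/p_j}$, which is $\geq 1$ because $p_j>r_j$. For the $s$-component, the boundary convention $q=s$ forces us to read $\frac{1/q-1/s}{1/p-1/s}$ as $1$. Consequently the maximum inside \eqref{eq:multextraquant} collapses to
\[
\max_{j=1,\ldots,m}\frac{\frac{1}{r_j}}{\frac{1}{r_j}-\frac{1}{p_j}},
\]
which is precisely the exponent in the statement of Theorem~\ref{thm:upendextrap}.

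Finally, one must track where the prefactor $2^{m/r}$ (rather than the generic $2^{m^2/r}$ produced by \eqref{eq:multextraquant}) comes from. The point is that the iterative Step~2 in the proof of Theorem~\ref{thm:main} collapses in the present setting. Indeed, after reducing to $\frac{1}{r}=1$ and setting $\frac{1}{q_{m+1}}=1-\frac{1}{q}=1$, $\frac{1}{p_{m+1}}=1-\frac{1}{p}<1$, one checks that $\frac{1}{p_j}>0=\frac{1}{q_j}$ for every $j\in\{1,\ldots,m\}$, while $\frac{1}{p_{m+1}}\leq\frac{1}{q_{m+1}}$. After the permutation in Step~2, this places us in the case $j_1=m$, so only a single pass of Step~1 (hence a single application of the multilinear Rubio de Francia algorithm) is needed. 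The resulting constant in \eqref{eq:fin2mrelab1} is therefore $(2^m)^{m-j_1+1}=2^m$ rather than the worst-case $2^{m^2}$, which after the $r$-rescaling performed at the start of the proof of Theorem~\ref{thm:qmlre} becomes $2^{m/r}$.

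The only mildly delicate point, and the one I would expect to need to spell out carefully, is the handling of the two boundary interpretations simultaneously: $q_j=\infty$ together with $q=s=\infty$. Once the remark after Theorem~\ref{thm:qmlre} is invoked and the arithmetic of the exponents is carried out as above, the remainder of the argument is mechanical.
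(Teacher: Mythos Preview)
Your proposal is correct and matches the paper's approach, which simply presents the theorem as an immediate application of Theorem~\ref{thm:qmlre} with $q_1=\cdots=q_m=s=\infty$; your explicit tracking of why the prefactor improves from $2^{m^2/r}$ to $2^{m/r}$ (via $j_1=m$ in Step~2, collapsing the iteration to a single pass of Step~1) goes beyond what the paper spells out. One small correction: when $p<s=\infty$ the $s$-term $\tfrac{1/q-1/s}{1/p-1/s}$ is simply $0$, not $1$ --- the convention $=1$ from the remark after Theorem~\ref{thm:qmlre} applies only to the boundary case $p=s$ --- but this is harmless since the remaining terms in the maximum are all at least $1$.
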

An interesting application is related to the space $\BMO$ of functions of bounded mean oscillation. We define the sharp maximal operator $M^\#$ by
\[
M^\#f=\sup_{Q}\langle|f-\langle f\rangle_{1,Q}|\rangle_{1,Q}\chi_Q
\]
for locally integrable functions $f$, where the supremum is taken over all cubes $Q\subseteq\R^n$. The classical definition of $\BMO$ can be given in terms of $M^\#$ by saying a measurable function $f$ is in $\BMO$ if $M^\#f\in L^\infty$, with $\|f\|_{\BMO}:=\|M^\#f\|_{L^\infty}$. The way we have dealt with weighted estimates in $L^\infty$ so far suggests the following definition of a weighted version of the $\BMO$ space:
\begin{definition}
Given a weight $w$, we define the space $\BMO(w)$ as those locally integrable functions $f$ such that
\[
\|f\|_{\BMO(w)}:=\|(M^\# f)w\|_{L^\infty}<\infty.
\]
\end{definition}
Weighted $\BMO$ spaces also appeared in the work of Muckenhoupt and Wheeden in \cite{MW76}, and they showed that the estimate
\begin{equation}\label{eq:bmoextrap}
\|Tf\|_{\BMO(w)}\lesssim\|fw\|_{L^\infty},
\end{equation}
with an explicit constant depending on $w$, is satisfied when $T$ is the Hilbert transform, if and only if $w^{-1}\in A_1$. We recall here that the condition $w^{-1}\in A_1$ is equivalent to our condition $w\in A_{\infty,(1,\infty)}$ with $[w]_{\infty,(1,\infty)}=[w^{-1}]_{A_1}$. Later it was shown by Harboure, Mac\'ias and Segovia in \cite{HMS88} that one can extrapolate from the estimate \eqref{eq:bmoextrap} for an operator $T$ to obtain that $T$ is bounded on $L^p(w^p)$ for all $w^p\in A_p$. As a consequence of Theorem~\ref{thm:upendextrap} we obtain a qualitative multilinear version of this result.
\begin{corollary}[Extrapolation from $\BMO$ estimates]
Let $T$ be an $m$-(sub)linear operator and let $r_1,\ldots,r_m\in(0,\infty)$. Suppose that there is an increasing function $\phi$ such that
\[
\|T(f_1,\ldots,f_m)\|_{\BMO(w)}\leq\phi([\vec{w}]_{\vec{\infty},(\vec{r},\infty)})\prod_{j=1}^m\|f_jw_j\|_{L^\infty}
\]
for all $\vec{w}\in A_{\vec{\infty},(\vec{r},\infty)}$ and all $f_j$ with $f_jw_j\in L^\infty$.

Then for all $p_1\,\ldots,p_m\in(0,\infty]$ with $\vec{p}>\vec{r}$, there is an increasing function $\phi_{\vec{p},\vec{r}}$ such that
\[
\|T(f_1,\ldots,f_m)\|_{L^p(w^p)}\leq\phi_{\vec{p},\vec{r}}([\vec{w}]_{\vec{p},(\vec{r},\infty)})\prod_{j=1}^m\|f_j\|_{L^{p_j}(w_j^{p_j})}
\]
for all $\vec{w}\in A_{\vec{p},(\vec{r},\infty)}$ and all $f_j\in L^{p_j}(w_j^{p_j})$, whenever the left-hand side is finite.
\end{corollary}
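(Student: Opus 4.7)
The plan is to apply the upper endpoint extrapolation theorem (Theorem~\ref{thm:upendextrap}) to the $(m+1)$-tuple $(f_1,\ldots,f_m,M^\# T(f_1,\ldots,f_m))$, and then remove the sharp maximal function on the left-hand side by invoking the classical Fefferman--Stein inequality. Unwinding the definition of the $\BMO(w)$ norm, the hypothesis reads
\[
\|(M^\# T(f_1,\ldots,f_m))w\|_{L^\infty}\leq\phi([\vec{w}]_{\vec{\infty},(\vec{r},\infty)})\prod_{j=1}^m\|f_jw_j\|_{L^\infty},
\]
which is exactly the initial estimate required in Theorem~\ref{thm:upendextrap} with $h=M^\# T(f_1,\ldots,f_m)$. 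That theorem then yields, for every $\vec{p}>\vec{r}$ and every $\vec{w}\in A_{\vec{p},(\vec{r},\infty)}$,
\[
\|M^\# T(f_1,\ldots,f_m)\|_{L^p(w^p)}\leq\phi_{\vec{p},\vec{r}}([\vec{w}]_{\vec{p},(\vec{r},\infty)})\prod_{j=1}^m\|f_j\|_{L^{p_j}(w_j^{p_j})},
\]
with the explicit form of $\phi_{\vec{p},\vec{r}}$ provided there.

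To convert this bound on $M^\# T(f_1,\ldots,f_m)$ into a bound on $T(f_1,\ldots,f_m)$ itself, I would invoke the classical Fefferman--Stein inequality $\|g\|_{L^p(u)}\lesssim_{[u]_{A_\infty}}\|M^\# g\|_{L^p(u)}$, valid for $0<p<\infty$, $u\in A_\infty$, and any measurable $g$ for which the left-hand side is finite. Applied with $u:=w^p$ and $g:=T(f_1,\ldots,f_m)$, this produces exactly the required reduction, since the finiteness of $\|T(f_1,\ldots,f_m)\|_{L^p(w^p)}$ is the standing qualitative assumption of the corollary. Combining the two estimates yields the claimed bound, with the Fefferman--Stein constant absorbed into a possibly larger increasing function of $[\vec{w}]_{\vec{p},(\vec{r},\infty)}$.

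The main obstacle is verifying that $w^p$ is a scalar $A_\infty$ weight whose $A_\infty$ constant is quantitatively controlled by $[\vec{w}]_{\vec{p},(\vec{r},\infty)}$; this is what ensures that the Fefferman--Stein constant depends only on the multilinear weight constant. I would prove this by H\"older's inequality applied to the definition of $[\vec{w}]_{\vec{p},(\vec{r},\infty)}$: pairing the factor $\langle w\rangle_{p,Q}$ with suitable powers of the factors $\langle w_j^{-1}\rangle_{1/(1/r_j-1/p_j),Q}$ gives a uniform estimate of the form $\langle w^p\rangle_{1,Q}\langle w^{-p/(q-1)}\rangle_{1,Q}^{q-1}\lesssim_{\vec{p},\vec{r}}[\vec{w}]_{\vec{p},(\vec{r},\infty)}^{Cp}$ for some $q=q(\vec{p},\vec{r})<\infty$, witnessing $w^p\in A_q\subseteq A_\infty$ quantitatively. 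Alternatively, this embedding of the multilinear symmetric weight class into a scalar Muckenhoupt class for the product weight is already implicit in the boundedness of $M_{\vec{r}}$ on $L^p(w^p)$ proved in Proposition~\ref{prop:sinf}. Once this $A_\infty$ fact is in hand, the remainder of the argument is a direct concatenation of Theorem~\ref{thm:upendextrap} and a standard weighted norm inequality.
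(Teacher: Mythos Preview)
Your proposal is correct and follows essentially the same route as the paper: apply Theorem~\ref{thm:upendextrap} to the tuple $(f_1,\ldots,f_m,M^\#T(f_1,\ldots,f_m))$, then pass from $M^\#T$ to $T$ via the weighted Fefferman--Stein inequality, using H\"older's inequality (equivalently, $[w]_{p,(r,\infty)}\leq[\vec{w}]_{\vec{p},(\vec{r},\infty)}$ with $\tfrac{1}{r}=\sum_j\tfrac{1}{r_j}$) to place $w^p$ in a scalar Muckenhoupt class. The only minor difference is that the paper invokes Fefferman--Stein only for $p>1$ and then extrapolates a second time to reach $p\leq 1$, whereas you apply the $A_\infty$-weighted Fefferman--Stein inequality directly for all $0<p<\infty$; both are valid, and your version is slightly more streamlined.
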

\begin{proof}
We apply Theorem~\ref{thm:upendextrap} to the $m+1$-tuples $(f_1,\ldots,f_m,M^\#(T(f_1,\ldots,f_m)))$. Then we find that for all $p_1\,\ldots,p_m\in(0,\infty]$ with $\vec{p}>\vec{r}$, there is an increasing function $\phi_{\vec{p},\vec{r}}$ such that
\begin{equation}\label{eq:sharpext}
\|M^\#(T(f_1,\ldots,f_m))\|_{L^p(w^p)}\leq\phi_{\vec{p},\vec{r}}([\vec{w}]_{\vec{p},(\vec{r},\infty)})\prod_{j=1}^m\|f_j\|_{L^{p_j}(w_j^{p_j})}
\end{equation}
for all $\vec{w}\in A_{\vec{p},(\vec{r},\infty)}$ and all $f_j\in L^{p_j}(w_j^{p_j})$. By the classical Fefferman-Stein inequality for the sharp maximal operator, see \cite{FS72}, we find that
\[
\|T(f_1,\ldots,f_m)\|_{L^p(w^p)}\lesssim\|M^\#(T(f_1,\ldots,f_m))\|_{L^p(w^p)},
\]
for $p>1$, with implicit constant depending on the $A_\infty$ constant of $w^p$, which is bounded by an increasing function of $[w]_{p,(r,\infty)}$, where $\frac{1}{r}=\sum_{j=1}^m\frac{1}{r_j}$, see also \cite[Chapter 7]{Gr14c}. Since $[w]_{p,(r,\infty)}\leq[\vec{w}]_{\vec{p},(\vec{r},s)}$ by H\"older's inequality, the result for $p>1$ follows from \eqref{eq:sharpext}. By extrapolating again, we also obtain the cases $p\leq 1$, proving the assertion.
\end{proof}
Examples of multilinear operators satisfying weak-type and $\BMO$ endpoint estimates are multilinear Calder\'on-Zygmund operators, see also \cite[Section 7.4.1]{Gr14m}. Weighted estimates in these situations can be found in \cite{LOPTT09}.

\bibliographystyle{alpha-sort}

\begin{thebibliography}{LMM{\etalchar{+}}19}

\bibitem[AM07]{AM07}
P.~Auscher and J.M. Martell.
\newblock Weighted norm inequalities, off-diagonal estimates and elliptic
  operators. {I}. {G}eneral operator theory and weights.
\newblock {\em Adv. Math.}, 212(1):225--276, 2007.

\bibitem[BM16]{BM16}
C.~Benea and C.~Muscalu.
\newblock Multiple vector-valued inequalities via the helicoidal method.
\newblock {\em Anal. PDE}, 9(8):1931--1988, 2016.

\bibitem[BM17]{BM17}
C.~{Benea} and C.~{Muscalu}.
\newblock {Sparse domination via the helicoidal method}.
\newblock ArXiv:1707.05484v2, 2017.

\bibitem[BM18]{BM18}
C.~{Benea} and C.~{Muscalu}.
\newblock {The Helicoidal Method}.
\newblock ArXiv:1801.10071, 2018.

\bibitem[BFP16]{frey16}
F.~Bernicot, D.~Frey, and S.~Petermichl.
\newblock Sharp weighted norm estimates beyond {C}alder\'on-{Z}ygmund theory.
\newblock {\em Anal. PDE}, 9(5):1079--1113, 2016.

\bibitem[CR16]{CR16}
J.M. {Conde-Alonso} and G.~Rey.
\newblock A pointwise estimate for positive dyadic shifts and some
  applications.
\newblock {\em Math. Ann.}, 365(3-4):1111--1135, 2016.

\bibitem[CM18]{CM17}
D.~{Cruz-Uribe} and J.M. Martell.
\newblock Limited range multilinear extrapolation with applications to the
  bilinear {H}ilbert transform.
\newblock {\em Math. Ann.}, 371(1-2):615--653, 2018.

\bibitem[CDO18]{CDO16}
Am. {Culiuc}, F.~{Di Plinio}, and Y.~{Ou}.
\newblock Domination of multilinear singular integrals by positive sparse
  forms.
\newblock {\em J. Lond. Math. Soc. (2)}, 98(2):369--392, 2018.

\bibitem[DHL18]{DHL18}
W.~Dami\'{a}n, M.~Hormozi, and K.~Li.
\newblock New bounds for bilinear {C}alder\'{o}n-{Z}ygmund operators and
  applications.
\newblock {\em Rev. Mat. Iberoam.}, 34(3):1177--1210, 2018.

\bibitem[DLP15]{DLP15}
W.~Dami\'an, A.K. Lerner, and C.~P\'erez.
\newblock Sharp weighted bounds for multilinear maximal functions and
  {C}alder\'on-{Z}ygmund operators.
\newblock {\em J. Fourier Anal. Appl.}, 21(1):161--181, 2015.

\bibitem[DGPP05]{DGPP05}
O.~Dragi\v{c}evi\'c, L.~Grafakos, M.C. Pereyra, and S.~Petermichl.
\newblock Extrapolation and sharp norm estimates for classical operators on
  weighted {L}ebesgue spaces.
\newblock {\em Publ. Mat.}, 49(1):73--91, 2005.

\bibitem[FS72]{FS72}
C.~Fefferman and E.~M. Stein.
\newblock {$H^{p}$} spaces of several variables.
\newblock {\em Acta Math.}, 129(3-4):137--193, 1972.

\bibitem[FN19]{FN17}
D.~Frey and Z.~Nieraeth.
\newblock Weak and {S}trong {T}ype {$A_1-A_\infty$} {E}stimates for {S}parsely
  {D}ominated {O}perators.
\newblock {\em J. Geom. Anal.}, 29(1):247--282, 2019.

\bibitem[GR85]{GR85}
J.~{Garc{\'{\i}}a-Cuerva} and J.~L. {Rubio de Francia}.
\newblock {\em Weighted norm inequalities and related topics}, volume 116 of
  {\em North-Holland Mathematics Studies}.
\newblock North-Holland Publishing Co., Amsterdam, 1985.
\newblock Notas de Matem{\'a}tica, 104.

\bibitem[Gra14a]{Gr14c}
L.~Grafakos.
\newblock {\em Classical {F}ourier analysis}, volume 249 of {\em Graduate Texts
  in Mathematics}.
\newblock Springer, New York, third edition, 2014.

\bibitem[Gra14b]{Gr14m}
L.~Grafakos.
\newblock {\em Modern {F}ourier analysis}, volume 250 of {\em Graduate Texts in
  Mathematics}.
\newblock Springer, New York, third edition, 2014.

\bibitem[GM04]{GM04}
L.~Grafakos and J.M. Martell.
\newblock Extrapolation of operators of many variables and applications.
\newblock {\em J. Geom. Anal.}, 14(1):19--46, 2004.

\bibitem[HH14]{HH14}
T.~S. H{\"a}nninen and T.~P. Hyt{\"o}nen.
\newblock The {$A_2$} theorem and the local oscillation decomposition for
  {B}anach space valued functions.
\newblock {\em J. Operator Theory}, 72(1):193--218, 2014.

\bibitem[HMS88]{HMS88}
E.~Harboure, R.A. Mac\'ias, and C.~Segovia.
\newblock Extrapolation results for classes of weights.
\newblock {\em Amer. J. Math.}, 110(3):383--397, 1988.

\bibitem[Hyt12]{Hy12}
T.~P. Hyt\"onen.
\newblock The sharp weighted bound for general {C}alder\'on-{Z}ygmund
  operators.
\newblock {\em Ann. of Math.}, 175(3):1473--1506, 2012.

\bibitem[HK12]{hytonen12}
T.P. Hyt{\"o}nen and A.~Kairema.
\newblock Systems of dyadic cubes in a doubling metric space.
\newblock {\em Colloq. Math.}, 126(1):1--33, 2012.

\bibitem[HP13]{hytonen13}
T.P. Hyt\"onen and C.~P\'erez.
\newblock Sharp weighted bounds involving {$A_\infty$}.
\newblock {\em Anal. PDE}, 6(4):777--818, 2013.

\bibitem[HRT17]{HRT17}
T.P. Hyt\"{o}nen, L.~Roncal, and O.~Tapiola.
\newblock Quantitative weighted estimates for rough homogeneous singular
  integrals.
\newblock {\em Israel J. Math.}, 218(1):133--164, 2017.

\bibitem[JN91]{JN91}
R.~Johnson and C.~J. Neugebauer.
\newblock Change of variable results for {$A_p$}- and reverse {H}\"older {${\rm
  RH}_r$}-classes.
\newblock {\em Trans. Amer. Math. Soc.}, 328(2):639--666, 1991.

\bibitem[LT99]{LT99}
M.~Lacey and C.~Thiele.
\newblock On {C}alder\'on's conjecture.
\newblock {\em Ann. of Math. (2)}, 149(2):475--496, 1999.

\bibitem[Lac17]{lacey17}
Michael~T. Lacey.
\newblock An elementary proof of the {$A_2$} bound.
\newblock {\em Israel J. Math.}, 217(1):181--195, 2017.

\bibitem[LM17]{LM17}
M.T. Lacey and D.~Mena.
\newblock The sparse {T}1 theorem.
\newblock {\em Houston J. Math.}, 43(1):111--127, 2017.

\bibitem[Ler08]{Le08}
A.K. Lerner.
\newblock An elementary approach to several results on the {H}ardy-{L}ittlewood
  maximal operator.
\newblock {\em Proc. Amer. Math. Soc.}, 136(8):2829--2833, 2008.

\bibitem[LN15]{LN15}
A.K. {Lerner} and F.~{Nazarov}.
\newblock {Intuitive dyadic calculus: the basics}.
\newblock ArXiv:1508.05639, August 2015.

\bibitem[LOP{\etalchar{+}}09]{LOPTT09}
A.K. Lerner, S.~Ombrosi, C.~P\'erez, R.H. Torres, and R.~Trujillo-Gonz\'alez.
\newblock New maximal functions and multiple weights for the multilinear
  {C}alder\'on-{Z}ygmund theory.
\newblock {\em Adv. Math.}, 220(4):1222--1264, 2009.

\bibitem[Li17]{li17}
K.~Li.
\newblock Two weight inequalities for bilinear forms.
\newblock {\em Collect. Math.}, 68(1):129--144, 2017.

\bibitem[LMM{\etalchar{+}}19]{LMMOV19}
K.~{Li}, J.M. {Martell}, H.~{Martikainen}, S.~{Ombrosi}, and {Vuorinen} E.
\newblock {End-point estimates, extrapolation for multilinear Muckenhoupt
  classes, and applications}.
\newblock arXiv:1902.04951, 2019.

\bibitem[LMO18]{LMO18}
K.~{Li}, J.M. {Martell}, and S.~{Ombrosi}.
\newblock {Extrapolation for multilinear Muckenhoupt classes and applications
  to the bilinear Hilbert transform}.
\newblock arXiv:1802.03338, 2018.

\bibitem[LMS14]{LMS14}
K.~Li, K.~Moen, and W.~Sun.
\newblock The sharp weighted bound for multilinear maximal functions and
  {C}alder\'on-{Z}ygmund operators.
\newblock {\em J. Fourier Anal. Appl.}, 20(4):751--765, 2014.

\bibitem[LN17]{LN17}
E.~{Lorist} and Z.~{Nieraeth}.
\newblock {Vector-valued extensions of operators through multilinear limited
  range extrapolation}.
\newblock ArXiv:1712.08157, to appear in \emph{J. Fourier Anal. Appl.}, 2017.

\bibitem[MW76]{MW76}
B.~Muckenhoupt and R.L. Wheeden.
\newblock Weighted bounded mean oscillation and the {H}ilbert transform.
\newblock {\em Studia Math.}, 54(3):221--237, 1975/76.

\end{thebibliography}
\newcommand{\etalchar}[1]{$^{#1}$}

\end{document}